\newtheorem{first}{proposition}[section]
\newtheorem{lemma}[first]{Lemma}
\newtheorem{theorem}[first]{Theorem}
\newtheorem{proposition}[first]{Proposition}
\newtheorem{corollary}[first]{Corollary}
\def\newrmtheorem#1{\@ifnextchar[{\@rmothm{#1}}{\@rmnthm{#1}}}
\def\@rmnthm#1#2{%
\@ifnextchar[{\@rmxnthm{#1}{#2}}{\@rmynthm{#1}{#2}}}
\def\@rmxnthm#1#2[#3]{\expandafter\@ifdefinable\csname #1\endcsname
{\@definecounter{#1}\@addtoreset{#1}{#3}%
\expandafter\xdef\csname the#1\endcsname{\expandafter\noexpand
  \csname the#3\endcsname \@rmthmcountersep \@rmthmcounter{#1}}%
\global\@namedef{#1}{\@rmthm{#1}{#2}}\global\@namedef{end#1}{\@endrmtheorem}}}
\def\@rmynthm#1#2{\expandafter\@ifdefinable\csname #1\endcsname
{\@definecounter{#1}%
\expandafter\xdef\csname the#1\endcsname{\@rmthmcounter{#1}}%
\global\@namedef{#1}{\@rmthm{#1}{#2}}\global\@namedef{end#1}{\@endrmtheorem}}}
\def\@rmothm#1[#2]#3{\expandafter\@ifdefinable\csname #1\endcsname
  {\global\@namedef{the#1}{\@nameuse{the#2}}%
\global\@namedef{#1}{\@rmthm{#2}{#3}}%
\global\@namedef{end#1}{\@endrmtheorem}}}
\def\@rmthm#1#2{\refstepcounter
    {#1}\@ifnextchar[{\@rmythm{#1}{#2}}{\@rmxthm{#1}{#2}}}
\def\@rmxthm#1#2{\@beginrmtheorem{#2}{\csname the#1\endcsname}\ignorespaces}
\def\@rmythm#1#2[#3]{\@opargbeginrmtheorem{#2}{\csname
       the#1\endcsname}{#3}\ignorespaces}
\def\@rmthmcounter#1{\noexpand\arabic{#1}}
\def\@rmthmcountersep{}% in plaats van {.}
\def\@beginrmtheorem#1#2{\rm \trivlist
      \item[\hskip \labelsep{\bf #1\ #2\thmrmcounterend}]}
\def\@opargbeginrmtheorem#1#2#3{\rm \trivlist
      \item[\hskip \labelsep{\bf #1\ #2\ (#3)\thmrmcounterend}]}
\def\@endrmtheorem{\endtrivlist}
\def\thmrmcounterend{\hskip 0em\relax} % . removed
\def\newrmwntheorem#1#2{\expandafter\@ifdefinable\csname #1\endcsname%
\global\@namedef{#1}{\@rmwnthm{#1}{#2}}%
\global\@namedef{end#1}{\@endrmwntheorem}}
\def\@rmwnthm#1#2{\@ifnextchar[{\@rmwnythm{#1}{#2}}{\@rmwnxthm{#1}{#2}}}
\def\@rmwnxthm#1#2{\@beginrmwntheorem{#2}\ignorespaces}
\def\@rmwnythm#1#2[#3]{\@opargbeginrmwntheorem{#2}{#3}\ignorespaces}
\def\@beginrmwntheorem#1{\rm \trivlist
      \item[\hskip \labelsep{\bf #1\thmrmwncounterend}]}
\def\@opargbeginrmwntheorem#1#2{\rm \trivlist
      \item[\hskip \labelsep{\bf #1\ (#2)\thmrmwncounterend}]}
\def\@endrmwntheorem{\endtrivlist}
\def\thmrmwncounterend{.\hskip 1em\relax}
\newcommand{\rr}{\mathbb{R}}
\newcommand{\dd}{{\rm d}}
\newcommand{\ca}{\mathcal{A}}
\newcommand{\cb}{\mathcal{B}}
\newcommand{\cf}{\mathcal{F}}
\newcommand{\cg}{\mathcal{G}}
\newcommand{\ck}{\mathcal{K}}
\newcommand{\cl}{\mathcal{L}}
\newcommand{\cm}{\mathcal{M}}
\newcommand{\cn}{\mathcal{N}}
\newcommand{\cp}{\mathcal{P}}
\newcommand{\half}{\frac{1}{2}}
\newcommand{\vect}{\mathrm{vec}}
\newenvironment{proof}[1][\proofname]{\par \normalfont \trivlist
 \item[\hskip\labelsep\itshape #1]\ignorespaces
}{%
 \hspace*{\fill}$\Box$ \endtrivlist
}
\newcommand{\proofname}{{\bf Proof}}
\begin{document}

\pagestyle{empty}
\begin{center}
\Large
A block Hankel generalized confluent Vandermonde matrix  \bigskip\\
\large
Andr\'e Klein\footnote{Department of Quantitative Economics, Universiteit van Amsterdam, Roetersstraat 11,
1018WB  Amsterdam, {\tt a.a.b.klein@uva.nl}} and Peter Spreij\footnote{Korteweg-de Vries Institute for Mathematics,
Universiteit van Amsterdam, PO Box 94248, 1090GE Amsterdam,
{\tt spreij@uva.nl}}
\bigskip
\begin{abstract}
Vandermonde matrices are well known. They have a number of interesting properties and play a role in (Lagrange) interpolation problems, partial fraction expansions, and finding solutions to linear ordinary differential equations, to mention just a few applications.
Usually, one takes these matrices square, $q\times q$ say, in which case the $i$-th column is given by $u(z_i)$, where we write $u(z)=(1,z,\ldots,z^{q-1})^\top$. If all the $z_i$ ($i=1,\ldots,q$) are different, the Vandermonde matrix is non-singular, otherwise not. The latter case obviously takes place when all $z_i$ are the same, $z$ say, in which case one could speak of a confluent Vandermonde matrix. Non-singularity is obtained if one considers the matrix $V(z)$ whose $i$-th column ($i=1,\ldots,q$) is given by the $(i-1)$-th derivative $u^{(i-1)}(z)^\top$.

We will consider generalizations of the confluent Vandermonde matrix $V(z)$ by considering matrices obtained by using as building blocks the matrices $M(z)=u(z)w(z)$, with $u(z)$ as above and $w(z)=(1,z,\ldots,z^{r-1})$, together with its derivatives $M^{(k)}(z)$. Specifically, we will look at  matrices whose $ij$-th block is given by $M^{(i+j)}(z)$, where the indices $i,j$ by convention have initial value zero. These in general non-square matrices exhibit a block-Hankel structure. We will answer a number of elementary questions for this matrix. What is the rank? What is the null-space? Can the latter be parametrized in a simple way? Does it depend on $z$? What are left or right inverses? It turns out that answers can be obtained by factorizing the matrix into a product of other matrix polynomials having a simple structure. The answers depend on the size of the matrix $M(z)$ and the number of derivatives $M^{(k)}(z)$ that is involved. The results are obtained by mostly elementary methods, no specific knowledge of the theory of matrix polynomials is needed.
\medskip\\
%\noindent
{\sl keywords:} Hankel matrix, confluent Vandermonde matrix, matrix polynomial\\
{\sl AMS classification:} 15A09, 15A23, 15A24, 15B99
\end{abstract}

\end{center}

\newpage
\pagestyle{plain}
\setcounter{page}{1}

\section{Introduction and notations}

\subsection{Motivation}

Vandermonde matrices are well known. They have a number of interesting properties and play a role in (Lagrange) interpolation problems, partial fraction expansions, and finding solutions to linear ordinary differential equations, to mention just a few applications. Usually, one takes these matrices square, $q\times q$ say, in which case the $i$-th column is given by $u(z_i)$, where we write $u(z)=(1,z,\ldots,z^{q-1})^\top$. If all the $z_i$ ($i=1,\ldots,q$) are different, the Vandermonde matrix is non-singular, otherwise not. The latter case obviously takes place when all $z_i$ are the same, $z$ say, in which case one could speak of a confluent Vandermonde matrix. Non-singularity is obtained if one considers the matrix $V(z)$ whose $i$-th column ($i=1,\ldots,q$) is given by the $(i-1)$-th derivative $u^{(i-1)}(z)^\top$, or by $u^{(i-1)}(z)^\top/(i-1)!$. In the latter case, one has $\det(V(z))=1$. A slightly more general situation is obtained, when one considers the in general non-square $q\times\nu$ matrix $V(z)$, with $i$-th column $u^{(i-1)}(z)^\top$, $i=1,\ldots,\nu-1$. In this case one has that $V(z)$ has rank equal to $\min\{q,\nu\}$ and for $\nu>q$, the kernel of $V(z)$ is the $(\nu-q)$-dimensional subspace of $\rr^\nu$ consisting of the vectors whose first $q$ elements are equal to zero. Note that the building elements of the matrix $V$ are the column vectors $u_i(z)$ and a number of its derivatives.

The observations above can be generalized in many directions. In the present paper we opt for one of them, in which we will consider generalizations of the confluent Vandermonde matrix $V(z)$ by considering matrices obtained by using as building blocks the matrices $M(z)=u(z)w(z)\in\rr^{q\times r}$, with $u(z)$ as above and $w(z)=(1,z,\ldots,z^{r-1})$, together with its derivatives $M^{(k)}$ with $0\leq k\leq \nu-1$. Note that $M(z)=V(z)$ if $r=1$.

A special case of what follows is obtained by considering the matrix
\[
\cm^0(z)=(M(z),\ldots,M^{(\nu-1)}(z))\in\rr^{q\times\nu r}.
\]
In a recent paper~\cite{ks2007}, the kernel of the matrix
$\cm^0(z)$ (or rather, a matrix obtained by a permutation of the columns of $\cm^0(z)$) has been studied for the case $q=r$ and $\nu\leq q$. 
More precisely, the two aims of the cited paper were to find a right
inverse and a parametrization of the kernel of the matrix $\cm^0(z)$. The two cases $\nu=q$ or $\nu< q$ have been analyzed in
detail. For these two cases two algorithms have been proposed to
construct the kernel and the rank and the dimension of the kernel
have been computed.
This was of relevance for the
characterization of a matrix polynomial equation having non-unique
solutions. The origin of this problem was of a statistical nature and lied in
properties of the asymptotic Fisher information matrix for estimating the parameters of an ARMAX
process and a related Stein equation.

The results on the parametrization of the  kernel obtained in~\cite{ks2007} turned out to
be unnecessary complicated, partly due an apparently irrelevant
distinction between different cases, and not transparent. In the
present paper we reconsider the problem by embedding it into a
much more general approach to obtaining properties of matrix
polynomials that can be viewed as generalizations of a confluent
Vandermonde matrix (in a single variable) as indicated above. The solution, which is
shown to exhibit a very simple and elegant structure, to the
original problem of describing the kernel under consideration, now
follows as a byproduct of the current analysis. These are special cases of the unified
situations of Corollary~\ref{cor:rankaz} and
Proposition~\ref{prop:kerkz} of the present paper. In this way we generalize in the
present paper the results of~\cite{ks2007}. Another approach to
find a basis for the kernel has been followed in~\cite{ks2005},
which is also subsumed by Proposition~\ref{prop:kerkz}. The results of \cite{ks2005} are closer in spirit to those of the present paper than the results in~\cite{ks2007}. Both cited papers contain some examples of right inverses of $\cm^0(z)$, that are special cases of what will be obtained in the present paper. Moreover, we will generalize the results for $\cm^0(z)$ to results for the matrix $\cm(z)$ that is defined by
\[
\cm(z)=
\begin{pmatrix}
M(z) & \cdots & M^{(\nu-1)}(z) \\
\vdots & & \vdots \\
M^{(\mu-1)}(z) & \cdots & M^{(\mu+\nu-2)}(z)
\end{pmatrix}
\in\rr^{\mu q\times \nu r},
\]
whose blocks we denote $\cm(z)^{ij}$, $i=0,\ldots,\mu-1$, $j=0,\ldots,\nu-1$, so $\cm(z)^{ij}=M^{(i+j)}(z)$.
We will also consider the related matrix $\cn(z)$ whose blocks are $\cn(z)^{ij}=\frac{M^{(i+j)}(z)}{i!j!}$.

A special case occurs for the choices of the parameters $r=1$, $\mu=1$ and $\nu=q$.
Then we write $\cm(z)=U_q(z)$ and $\cn(z)=\widetilde{U}_q(z)$, ordinary (normalized) $q\times q$ confluent Vandermonde matrices.
The results in the next sections (almost) reduce to trivialities for ordinary confluent
Vandermonde matrices, so the contribution of the present paper originates with allowing
the parameters $\mu$, $\nu$, $q$ and $r$ to be arbitrary. 

Classical Vandermonde matrices and confluent Vandermonde matrices have often been studied in the literature, see \cite{lt1985, fuhrmann1996} for definitions. More recent papers often focussed on finding formulas for their inverses and on efficient numerical procedures to compute them, a somewhat random choice of references are the papers \cite{chang1974, csaki1975, gohbergolshevsky1997, goknarizzet1973, kaufman1969, lupa1975, lutherrost2004, schapelle1972}. Nevertheless it seems that the generalization $\cm(z)$ of the confluent Vandermonde matrix is, to the best of our knowledge, unknown in the literature.

\subsection{Notations and conventions}

Derivatives of a function $z\mapsto f(z)$ (often matrix valued)
are denoted by $f^{(k)}(z)$ or by $(\frac{\dd}{\dd z})^k f(z)$.
The variable $z$ is in principle $\mathbb{C}$-valued. If $A$ is an
$m\times n$ matrix, for notational convenience we adopt the
convention to label its elements $A_{ij}$ with $i=0,\ldots,m-1$
and $j=0,\ldots,n-1$. We will see shortly why this a convenient
convention. Entries of a matrix are also indicated both by superindices, according to what is typographically most appropriate. When dealing with  matrix polynomials $A(z)$, in proving results we usually suppress the dependence on $z$ and simply write $A$.

If $\ca$ is a block matrix, we sometimes use subindexes
to indicate its constituting blocks, but more often denote
its blocks by superindices, so we write $\ca^{ij}$ or $A^{ij}$ in
the latter case. Also single superindices are used and we will
come across notations like $A^k$. These should not be confused
with powers of $A$. The meaning of $A^k$ will always be clear from
the context.

Throughout the paper $q,r,\mu$ and $\nu$ are fixed
positive integers, although often certain relations among them are
supposed (e.g.\ $q+1\leq \nu<q+r$). Given the integers $q,r,\mu$
and $\nu$ and the variable $z$, we consider  a number of matrix
polynomials. The first are $u(z)=(1,\ldots,z^{q-1})^\top$ and
$w(z)=(1,\ldots,z^{r-1})$. With the above labelling convention, valid throughout the paper, we
have  for the elements of $u(z)$ the expression $u_i(z)=z^i$ for
$i=0,\ldots, q-1$. Next to these we consider the  matrix
polynomials
\begin{align*}
M(z) & = u(z)w(z) \in\rr^{q\times r}, \\
\cm^0_j(z) & = M^{(j)}(z)= (\frac{\dd}{\dd z})^jM(z)\in\rr^{q\times r}, \\
\cm^0(z) & =
\begin{pmatrix} \cm^0_0(z),\ldots,\cm^0_{\nu-1}(z)
\end{pmatrix} \in\rr^{q\times \nu r}, \\
\cn^0_j(z) & = \frac{1}{j!}M^{(j)}(z)= \frac{1}{j!}(\frac{\dd}{\dd z})^jM(z)\in\rr^{q\times r}, \\
\cn^0(z) & =
\begin{pmatrix} \cn^0_0(z),\ldots,\cn^0_{\nu-1}(z)
\end{pmatrix} \in\rr^{q\times \nu r}.
\end{align*}
We also consider the $\mu q\times\nu r$ matrix
polynomial $\mathcal{M}(z)$ which is defined by its $ij$-blocks
\begin{equation}\label{eq:defcm}
\mathcal{M}^{ij}(z)= M^{(i+j)}(z),
\end{equation}
for $i=0,\ldots \mu-1$, $j=0,\ldots,\nu-1$.
%Notice that the first row
%of blocks is just the matrix polynomial
%$(M^{(z)},\ldots,M^{\nu-1}(z))$ and that $M^{(k)}(z)=0$ for
%$k>r+q-2$.
The size of $\cm(z)$ is equal to $\mu
q\times\nu r$. If $\mu=1$, we retrieve $\cm^0(z)$. Note that $\cm(z)$ has a block Hankel structure, even for the case $\mu\neq \nu$.

Along with $\cm(z)$ we consider
the matrix $\cn(z)$ that has a block structure with blocks
\[
\cn^{ij}(z)=\frac{1}{i!j!}M^{(i+j)}(z),
\]
for $i=0,\ldots,\mu-1$ and $j=0,\ldots \nu-1$.  Note that $\cn(z)$
also has dimensions $\mu q\times \nu r$ and that $\cn(z)$ reduces to
$\cn^0(z)$ for $\mu=1$. Unlike $\cm(z)$, $\cn(z)$ doesn't exhibit a block Hankel structure. The following obvious relation holds.
\begin{equation}\label{eq:mdnd}
\cm(z)=(D_\mu\otimes I_q)\cn(z)(D_\nu\otimes I_r),
\end{equation}
where $D_\mu$ is the diagonal matrix with entries $D_{ii}=i!$, $i=0,\ldots,\mu-1$ and $D_\nu$ likewise.

Let us now introduce the  matrices $U_q(z)$ (of size $q\times q$) and $W_r(z)$ (of size $r\times r$) by
\[
U_q(z)=(u(z),\ldots,u^{(q-1)}(z))
\]
and
\[
W_r(z)^\top=(w(z)^\top,\ldots,w^{(r-1)}(z)^\top).
\]
Along with the matrices $U_q(z)$ and $W_r(z)$, we introduce the
matrices
\begin{equation}\label{eq:utilde}
\widetilde{U}_q(z)=(\widetilde{u}_0(z),\ldots,\widetilde{u}_{q-1}(z)),
\end{equation}
with
\[
\widetilde{u}_j(z)=\frac{1}{j!}u^{(j)}(z), \,j=0,\ldots,q-1,
\]
and $\widetilde{W}_r(z)$ given by
\begin{equation}\label{eq:wtilde}
\widetilde{W}_r(z)^\top =(\widetilde{w}_0(z),\ldots,\widetilde{w}_{r-1}(z)),
\end{equation}
with
\[
\widetilde{w}_j(z)=\frac{1}{j!}w^{(j)}(z), \,j=0,\ldots,r.
\]
We have the obvious relations
\[
U_q(z)=\widetilde{U}_q(z)D_q,\, W_r(z)=D_r\widetilde{W}_r(z),
\]
where $D_q$ and $D_r$ are diagonal matrices, similarly defined as $D_\mu$. One easily verifies that $\widetilde{U}_q(z)$ has elements
$\widetilde{U}_q^{ij}(z)=z^{i-j}{i \choose j}$ and by a simple
computation that $\widetilde{U}_q(z)^{-1}=\widetilde{U}_q(-z)$. Similar
properties hold for $\widetilde{W}_r(z)$. Later on we will also come across the matrices $U_\mu(z)$ and $W_\nu(z)$, which have the same structure as  $U_q(z)$ and $W_r(z)$ and only differ in size (unless $\mu=q$ and $\nu=r$).

Finally we introduce the often used square shift matrices $S_k\in\rr^{k\times k}$ ($k$ arbitrary), defined by its $ij$-elements $\delta_{i+1,j}$ (Kronecker deltas).
Other matrices will be introduced along the way.
\medskip\\
The structure of the remainder of the paper is as follows. After having fixed some notation and other conventions,
in Section~\ref{section:ca} we shall introduce an auxiliary matrix polynomial $\ca(z)$ that is instrumental in deriving properties of $\cm(z)$ and $\cn(z)$. Among them are factorizations, which will be treated in Section~\ref{section:factorization}. These lead to establishing the rank of $\cm(z)$, which is shown to be independent of $z$. A major issue in the present paper is to find a simple and transparent parametrization of the kernels of the matrices $\cm(z)$ and $\cn(z)$. This is done first in Section~\ref{section:kerneln0} for the special case of $\cn^0(z)$. The results of that section will be used in Section~\ref{section:kerneln} to characterize the kernel of $\cn(z)$.  The results of Section~\ref{section:intermezzo} form the cornerstone of finding right inverses of $\cm(z)$, especially those that are not dependent on $z$, which is a main topic of the final Section~\ref{section:rightinverse}.

\section{The matrix $\ca(z)$}\label{section:ca}
\setcounter{equation}{0}

This section is devoted to the matrix $\ca(z)$, to be defined below, that is instrumental in deriving properties of $\cm(z)$ and $\cn(z)$. In particular it is used for obtaining useful factorizations in Section~\ref{section:factorization}.
The  matrix $\ca(z)$, also of block Hankel type and of size $\mu q\times \nu r$
is defined by its blocks $\mathcal{A}_{ij}(z):=A^{i+j}(z)$ (here $i+j$ is
used as a super index) of size $q\times r$, for $i=0,\ldots
\mu-1$, $j=0,\ldots,\nu-1$. The matrices $A^k(z)$ for $k\geq 0$ are  the $q\times r$ quasi-symmetric matrix polynomials
(this matrix polynomial is only square for $q=r$) given by their elements
($i,j=0,\ldots,q-1$)
\begin{equation}\label{eq:akz}
A^k_{ij}(z)=\frac{1}{i!j!}(\frac{\dd}{\dd z})^{i+j}z^k={k \choose
i,j}z^{k-i-j}.
\end{equation}
Note that $k$ in $A^k(z)$ is used as a super index and in $z^k$ as
a power. The block Hankel matrix $\ca(z)$ of size $\mu q\times\nu r$ is
then defined by its blocks $\ca^{ij}(z)$ which are equal to
$A^{i+j}(z)$, for $i=0,\ldots,\mu-1$ and $j=0,\ldots,\nu-1$. One
easily verifies that (with $A^{k-1}_{-1,j}=A^{k-1}_{i,-1}=0$) for
$k\geq 1$ it holds that
\[
A^k_{ij}(z)=zA^{k-1}_{ij}(z)+A^{k-1}_{i-1,j}(z)+A^{k-1}_{i,
j-1}(z).
\]
In matrix notation this relation becomes
\begin{equation}\label{eq:Az1}
A^k(z)=zA^{k-1}(z)+A^{k-1}(z)S_r+S_q^\top A^{k-1}(z).
\end{equation}
Let $J_q(z)=zI_q+S_q^\top $. Then we can
rewrite (\ref{eq:Az1}) as
\begin{equation}\label{eq:A11}
A^k(z)=J_q(z)A^{k-1}(z)+A^{k-1}(z)S_r,
\end{equation}
or as
\begin{equation}\label{eq:A21}
A^k(z)=S_q^\top A^{k-1}(z)+A^{k-1}(z)J_r(z)^\top.
\end{equation}

By induction one easily proves that the recursion~(\ref{eq:A11})
leads to the explicit expressions, involving powers of $S_q^\top$ and $S_r$,
\begin{equation*}
A^k(z)=\sum_{i=0}^k{k \choose i}J_q(z)^iA^0S_r^{k-i}
\end{equation*}
and
\begin{equation}\label{eq:akl1}
A^{k+l}(z)=\sum_{i=0}^k{k \choose i}J_q(z)^iA^l(z)S_r^{k-i}.
\end{equation}
Likewise one shows that (\ref{eq:A21}) leads to
\begin{equation}\label{eq:ak01}
A^k(z)=\sum_{i=0}^k{k \choose i}(S_q^\top)^iA^0(J_r(z)^\top)^{k-i}.
\end{equation}
On the other hand, the recursion (\ref{eq:Az1}) has solution
\begin{equation}\label{eq:sol}
A^k(z)=\sum_{j=0}^k{k\choose j}z^{k-j}\sum_{i=0}^j{j\choose
i}(S_q^\top)^iA^0S_r^{j-i}.
\end{equation}
It also follows that
\[
A^k(0)=\sum_{i=0}^k{k\choose i}(S_q^\top)^iA^0S_r^{k-i}.
\]
Look back at the $A^k(z)$ as defined in~(\ref{eq:akz}). One
computes for $k\geq 0$
\begin{equation}\label{eq:A}
A^k_{ij}(0)=\left\{
\begin{array}{cl}
{k\choose i} & \mbox{ if }i+j=k\\
0 & \mbox{ else.}
\end{array}
\right.
\end{equation}
Henceforth we shall write $A^k$ instead of $A^k(0)$. Note that $A^{k}$, which
is in general not square, has only nonzero elements on one
`anti-diagonal', the one with $ij$-elements having sum $i+j=k$.
We have the
following result.
\begin{proposition}\label{prop:arec}
The matrices $A^k=A^k(0)$ satisfy the recursion for $k\geq 0$
\[
A^{k+1}=A^kS_r + S_q^\top A^k,
\]
and hence
\[
A^k=\sum_{j=0}^k{k\choose j}(S_q^\top)^jA^0S_r^{k-j}.
\]
Morever, we also have the polynomial expansion
\[
A^k(z)=\sum_{j=0}^k{k\choose j}z^{k-j}A^j.
\]
\end{proposition}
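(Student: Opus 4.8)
The plan is to prove the three assertions in sequence, each resting on the previous one, using only what has already been assembled in this section.

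\emph{The recursion.} The quickest route to $A^{k+1}=A^kS_r+S_q^\top A^k$ is to evaluate~(\ref{eq:Az1}) at $z=0$: since that identity holds for $k\geq1$ it gives $A^k(0)=A^{k-1}(0)S_r+S_q^\top A^{k-1}(0)$, and relabelling $k\mapsto k+1$ yields the claim for all $k\geq0$ (recall that $A^k$ now denotes $A^k(0)$). Alternatively one can read it off directly from~(\ref{eq:A}): right multiplication by $S_r$ shifts columns, so $(A^kS_r)_{ij}=A^k_{i,j-1}$, while left multiplication by $S_q^\top$ shifts rows, so $(S_q^\top A^k)_{ij}=A^k_{i-1,j}$; by~(\ref{eq:A}) each of these is nonzero exactly when $i+j=k+1$, in which case their sum is $\binom{k}{i}+\binom{k}{i-1}=\binom{k+1}{i}=A^{k+1}_{ij}$ by Pascal's rule, and otherwise both terms vanish.

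\emph{The closed form.} For $A^k=\sum_{j=0}^k\binom{k}{j}(S_q^\top)^jA^0S_r^{k-j}$ I would induct on $k$, the case $k=0$ being trivial. In the inductive step I would substitute the hypothesis into $A^{k+1}=A^kS_r+S_q^\top A^k$, reindex the sum produced by the $S_q^\top$ factor, and collect terms; the coefficient of $(S_q^\top)^jA^0S_r^{k+1-j}$ is then $\binom{k}{j}+\binom{k}{j-1}=\binom{k+1}{j}$, again Pascal. Conceptually this is nothing but the binomial theorem applied to the two commuting operators ``multiply on the left by $S_q^\top$'' and ``multiply on the right by $S_r$'' acting on $A^0$, the recursion saying that $A^{k+1}$ is the image of $A^k$ under their sum (with $A^1=S_q^\top A^0+A^0S_r$).

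\emph{The polynomial expansion.} Finally, $A^k(z)=\sum_{j=0}^k\binom{k}{j}z^{k-j}A^j$ follows by feeding the closed form into the solution formula~(\ref{eq:sol}): since $A^0(z)=A^0$ is independent of $z$, (\ref{eq:sol}) reads $A^k(z)=\sum_{j=0}^k\binom{k}{j}z^{k-j}\bigl(\sum_{i=0}^j\binom{j}{i}(S_q^\top)^iA^0S_r^{j-i}\bigr)$, and the bracketed inner sum is precisely $A^j$ by the second assertion. A direct entrywise check is equally short: the $(i,\ell)$-entry of the right-hand side is $\sum_j\binom{k}{j}z^{k-j}A^j_{i\ell}$, only the term $j=i+\ell$ survives by~(\ref{eq:A}), and $\binom{k}{i+\ell}\binom{i+\ell}{i}=\binom{k}{i,\ell}$ recovers $A^k_{i\ell}(z)$ from~(\ref{eq:akz}). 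None of this presents a genuine obstacle; the only points requiring care are the zero-based indexing in the shift computations and the observation that $A^0(z)$ is constant, which is what makes the substitution into~(\ref{eq:sol}) legitimate.
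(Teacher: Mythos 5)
Your proposal is correct and follows essentially the same route as the paper: the recursion by setting $z=0$ in~(\ref{eq:Az1}), the closed form by iterating it, and the expansion by substituting that closed form into~(\ref{eq:sol}). The extra entrywise verifications via~(\ref{eq:A}) and Pascal's rule are sound but simply supplementary detail beyond the paper's terse argument.
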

\begin{proof}
The recursion follows by taking $z=0$ in~(\ref{eq:Az1}). The expansion follows from Proposition~\ref{prop:arec} and Equation~(\ref{eq:sol}).
\end{proof}

%have elements

%\begin{equation}\label{eq:A}
%A^k_{ij}=\left\{
%\begin{array}{cl}
%{k\choose i} & \mbox{ if } i+j=k \\
%0 & \mbox{ else, }
%\end{array}
%\right.
%\end{equation}
%for $i=0,\ldots, q-1$, $j=0,\ldots,r-1$.

\noindent
Two more matrices will be introduced next. First, let $\bar{\ca}$ be the $\mu q\times\nu r$ block-matrix whose $ij$-th
block (of size $q\times r$) is given by
$\bar{\ca}^{ij}=(S_q^\top)^jA^0S_r^i$ for $i=0,\ldots,\mu-1$ and
$j=0,\ldots,\nu-1$.

Second, we introduce the matrix $\cl_{\mu,q}(z)$ of size $\mu q\times\mu q$
with blocks $\cl_{\mu,q}(z)_{ij}={i \choose j}J_q(z)^{i-j}$ for
$i\geq j$ and zero else, $i,j=0,\ldots,\mu-1$. Since $\cl_{\mu,q}(z)$ is block lower diagonal with identity matrix as diagonal blocks, it follows that
$\cl_{\mu,q}(z)$ is invertible and that its inverse has $ij$-block
equal to ${i \choose j}J_q(z)^{i-j}(-1)^{i-j}$. Likewise one
defines the matrix $\cl_{\nu,r}(z)$ of size $\nu r\times\nu r$.

\begin{theorem}\label{thm:azfactor1}
The factorization
\begin{equation}\label{eq:lal1}
\ca(z) = \cl_{\mu,q}(z)\bar{\ca}\cl_{\nu,r}(z)^\top
\end{equation}
holds true. For $z=0$ and with $\ca=\ca(0)$ and $\cl_{\mu,q}=\cl_{\mu,q}(0)$ and
$\cl_{\nu,r}=\cl_{\nu,r}(0)$,  this becomes
\begin{equation}\label{eq:aa0}
\ca = \cl_{\mu,q}\bar{\ca}\cl_{\nu,r}^\top.
\end{equation}
Moreover, $\det(\ca(z))=\det(\bar{\ca})$, when both matrices are square.
\end{theorem}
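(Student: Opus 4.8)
The plan is to prove \eqref{eq:lal1} by comparing the $(i,j)$-blocks of both sides, using the closed-form expansions of $A^k(z)$ in powers of $J_q(z)$, $S_q^\top$, $S_r$ and $J_r(z)^\top$ derived above. On the right-hand side, $\cl_{\mu,q}(z)$ is block lower triangular with $(i,k)$-block $\binom{i}{k}J_q(z)^{i-k}$ for $k\le i$, $\bar\ca$ has $(k,l)$-block $(S_q^\top)^lA^0S_r^{k}$, and $\cl_{\nu,r}(z)^\top$ has $(l,j)$-block $\binom{j}{l}(J_r(z)^\top)^{j-l}$ for $l\le j$. Multiplying out, the $(i,j)$-block of $\cl_{\mu,q}(z)\bar\ca\cl_{\nu,r}(z)^\top$ equals
\[
\sum_{k=0}^{i}\sum_{l=0}^{j}\binom{i}{k}\binom{j}{l}\,J_q(z)^{i-k}(S_q^\top)^lA^0S_r^{k}(J_r(z)^\top)^{j-l},
\]
which, after the substitution $a=i-k$ (so $\binom{i}{k}=\binom{i}{a}$) and $b=l$, becomes $\sum_{a=0}^{i}\sum_{b=0}^{j}\binom{i}{a}\binom{j}{b}J_q(z)^{a}(S_q^\top)^bA^0S_r^{i-a}(J_r(z)^\top)^{j-b}$.

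On the left-hand side the $(i,j)$-block is $A^{i+j}(z)$ by definition. I would expand it by first applying \eqref{eq:akl1} with the two parameters equal to $i$ and $j$, giving $A^{i+j}(z)=\sum_{a=0}^{i}\binom{i}{a}J_q(z)^{a}A^{j}(z)S_r^{i-a}$, and then inserting the expansion of $A^{j}(z)$ supplied by \eqref{eq:ak01}, namely $A^{j}(z)=\sum_{b=0}^{j}\binom{j}{b}(S_q^\top)^bA^0(J_r(z)^\top)^{j-b}$. This yields
\[
A^{i+j}(z)=\sum_{a=0}^{i}\sum_{b=0}^{j}\binom{i}{a}\binom{j}{b}\,J_q(z)^{a}(S_q^\top)^bA^0(J_r(z)^\top)^{j-b}S_r^{i-a}.
\]
Comparing the two double sums term by term, the only discrepancy is the order of the last two factors $S_r^{i-a}$ and $(J_r(z)^\top)^{j-b}$; but $J_r(z)^\top=zI_r+S_r$ is a polynomial in $S_r$, so these commute and the two expressions coincide. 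This establishes \eqref{eq:lal1}, and setting $z=0$ gives \eqref{eq:aa0} directly, since $\ca(0)=\ca$, $\cl_{\mu,q}(0)=\cl_{\mu,q}$, $\cl_{\nu,r}(0)=\cl_{\nu,r}$.

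For the determinant claim, suppose $\mu q=\nu r$, so that $\ca(z)$ and $\bar\ca$ are square (the matrices $\cl_{\mu,q}(z)$ and $\cl_{\nu,r}(z)$ are square in any case). Since $\cl_{\mu,q}(z)$ is block lower triangular with identity matrices as diagonal blocks, $\det\cl_{\mu,q}(z)=1$, and likewise $\det\cl_{\nu,r}(z)^\top=\det\cl_{\nu,r}(z)=1$; taking determinants in \eqref{eq:lal1} then gives $\det\ca(z)=\det\bar\ca$, which in particular is independent of $z$. The one point that requires care is the index bookkeeping in the block multiplication, together with the decision to expand $A^{i+j}(z)$ via \eqref{eq:akl1} followed by \eqref{eq:ak01} in exactly the order that reproduces the factor pattern $J_q(z)^a(S_q^\top)^bA^0(\cdots)(\cdots)$ occurring on the right-hand side; the commutativity of $S_r$ and $J_r(z)^\top$ is what finally reconciles the two. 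A longer but equivalent route would be to compute $\cl_{\mu,q}(z)^{-1}\ca(z)\bigl(\cl_{\nu,r}(z)^\top\bigr)^{-1}$ directly from the known block form of the inverses of the $\cl$-matrices and verify that it equals $\bar\ca$.
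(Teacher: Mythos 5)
Your proposal is correct and follows essentially the same route as the paper: it rests on the block expansion of $\cl_{\mu,q}(z)\bar{\ca}\cl_{\nu,r}(z)^\top$, the two identities~(\ref{eq:akl1}) and~(\ref{eq:ak01}), the commutativity of $S_r$ with $J_r(z)^\top$, and $\det(\cl_{\mu,q}(z))=\det(\cl_{\nu,r}(z))=1$ for the determinant claim. The only cosmetic difference is that you expand both sides into double sums and match terms, whereas the paper collapses the right-hand side step by step into $A^{i+j}(z)$.
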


\begin{proof}
We compute the $ij$-block on the right hand side of~(\ref{eq:lal1}).
Using the definitions of $\cl_{\mu,q}(z)$, $\cl_{\nu,r}(z)$ and $\bar{\ca}$,
we obtain (see the explanation below)
\begin{align*}
\big(\cl_{\mu,q}(z)\bar{\ca}\cl_{\nu,r}(z)^\top\big)_{ij} & = \sum_{k,l}\cl_{\mu,q}(z)_{ik}\bar{\ca}^{kl}\cl_{\nu,r}(z)^\top_{lj} \\
& = \sum_{k=0}^i\sum_{l=0}^j{i\choose k}J_q(z)^{i-k}(S_q^\top)^lA^0S_r^k{j\choose l}(J_r(z)^\top)^{j-l} \\
& = \sum_{k=0}^i{i\choose k}J_q(z)^{i-k}\big(\sum_{l=0}^j(S_q^\top)^lA^0{j\choose l}(J_r(z)^\top)^{j-l}\big) S_r^k \\
& = \sum_{k=0}^i{i\choose k}J_q(z)^{i-k}A^j(z) S_r^k \\
& = A^{i+j}(z).
\end{align*}
In the third equality above we used that $J_r(z)^\top$ and $S_r$ commute,
in the fourth equality we used~(\ref{eq:ak01}) with an appropriate change of the indices,
whereas the last equality similarly follows from~(\ref{eq:akl1}). The relation between the determinants follows from $\det(\cl_{\mu,q}(z))=1$.
\end{proof}
Let $e_i$ be the $i$-th standard column basis vector of $\rr^q$,
$i=0,\ldots,q-1$ and let $f_i$ be the $i$-th standard column basis vector of $\rr^r$,
$i=0,\ldots,r-1$. For convenience of notation, we put $e_i=0$ for $i\geq q$ and $f_j=0$ for $j\geq r$.
With this convention, we always have for example  $(S_q^\top )^ie_0=e_i$.

\begin{theorem}\label{thm:ao1}
It holds that the block $\bar{\ca}^{ij}=e_jf_i^\top$,
the rank of $\bar{\ca}$ is equal to $\min\{\mu,r\}\times\min\{\nu,q\}$.
If $\bar{\ca}$ is a fat matrix, i.e.\ $\mu q\leq \nu r$, then $\bar{\ca}$
has full (row)rank iff $\mu\leq r$ and $\nu\geq q$. If $\bar{\ca}$ is a tall matrix,
so $\mu q\geq \nu r$, then it has full (column) rank iff $\mu\leq r$ and $\nu\leq q$.
In the special case that $\bar{\ca}$ is square, so $\mu q=\nu r$, we get that $\bar{\ca}$ has full rank,
and it is then invertible, iff $\mu=r$ and $\nu=q$, in which case $\bar{\ca}$ is a matrix of size $qr\times qr$.
 In this case we have for the inverse $(\bar{\ca})^{-1}=(\bar{\ca})^\top$ and $\det(\bar{\ca})=(-1)^{\frac{1}{4}qr(q-1)(r-1)}$.
 Specializing even more to $q=r$, we get $\det(\bar{\ca})=(-1)^{\frac{1}{2}q(q-1)}$.
\end{theorem}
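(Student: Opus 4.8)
The starting point is the identity $\bar{\ca}^{ij} = (S_q^\top)^j A^0 S_r^i$. Since $A^0$ has a single nonzero entry (namely $A^0_{00}=1$, because $A^0_{ij}(0)=\binom{0}{i}$ is $1$ only when $i=j=0$), we can write $A^0 = e_0 f_0^\top$. Using the convention $(S_q^\top)^j e_0 = e_j$ (with $e_j=0$ for $j\ge q$) and the transposed statement $f_0^\top S_r^i = (S_r^\top)^i f_0)^\top$... more carefully, $S_r^i f_0^\top$ acting on the right: $f_0^\top S_r^i = ((S_r^\top)^i f_0)^\top = f_i^\top$ (with $f_i=0$ for $i\ge r$). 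Hence $\bar{\ca}^{ij} = e_j f_0^\top S_r^i = e_j f_i^\top$, which is the first claim. So the $(i,j)$-block is the rank-one matrix $e_j f_i^\top$, nonzero precisely when $j<q$ and $i<r$.

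For the rank, I would observe that $\bar{\ca}$, as a map $\rr^{\nu r}\to\rr^{\mu q}$, has block $(i,j)$ sending the $j$-th block-coordinate direction to $e_j f_i^\top$. Collecting nonzero blocks, the column space of $\bar{\ca}$ is spanned by $\{e_j : 0\le j\le \min\{\nu,q\}-1\}$ stacked into the block rows $0,\dots,\min\{\mu,r\}-1$; more precisely one checks that after a permutation of block rows and block columns, $\bar{\ca}$ is a direct sum of copies of a fixed full-rank pattern, giving $\mathrm{rank}(\bar{\ca}) = \min\{\mu,r\}\cdot\min\{\nu,q\}$. Alternatively — and this is probably cleanest — one notes $\bar{\ca} = (\text{selection of }e_j\text{'s})\otimes(\text{selection of }f_i\text{'s})$ up to reordering, i.e.\ $\bar{\ca}$ is (a submatrix of) a Kronecker product $E\otimes F$ where $E\in\rr^{\mu\times\nu}$, $F\in\rr^{q\times r}$... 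I'd make this precise by exhibiting the block $(i,j)$ of $\bar{\ca}$ as $e_jf_i^\top = (F\otimes E)$-type entry and invoking $\mathrm{rank}(A\otimes B)=\mathrm{rank}(A)\mathrm{rank}(B)$, with the two factors being $\min\{\nu,q\}$ and $\min\{\mu,r\}$ respectively. From the rank formula, the full-rank characterizations are immediate: $\bar{\ca}$ is $\mu q\times\nu r$, so full row rank $\mu q$ needs $\min\{\mu,r\}\min\{\nu,q\}=\mu q$, i.e.\ $\mu\le r$ and $q\le\nu$; full column rank $\nu r$ needs $\mu\le r$ and $\nu\le q$; and in the square case $\mu q=\nu r$, both force $\mu=r$, $\nu=q$, of size $qr\times qr$.

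For the invertible case $\mu=r$, $\nu=q$: here the blocks are $\bar{\ca}^{ij}=e_jf_i^\top$ for $i=0,\dots,r-1$, $j=0,\dots,q-1$, and I claim $\bar{\ca}$ is (up to the identification of $\rr^{qr}$ with $\rr^q\otimes\rr^r$) the "swap"/commutation-type matrix sending $e_j\otimes f_i \mapsto$ the appropriate basis vector; in particular it is a signed permutation matrix — in fact an honest permutation matrix here, since all entries are $0$ or $1$. One then computes $(\bar{\ca})^\top$ has $(i,j)$-block $(e_if_j^\top)^\top = f_j e_i^\top$... I'd verify directly that $\bar{\ca}(\bar{\ca})^\top = I$ by a block multiplication: $\big(\bar{\ca}(\bar{\ca})^\top\big)_{ij} = \sum_k e_k f_i^\top f_j e_k^\top \cdot[\dots]$ — tracking indices, $f_i^\top f_j=\delta_{ij}$ collapses the sum and yields $\delta_{ij}\sum_k e_k e_k^\top = \delta_{ij}I_q$. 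So $(\bar{\ca})^{-1}=(\bar{\ca})^\top$.

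The determinant is the one genuinely fiddly computation. Since $\bar{\ca}$ is a permutation matrix on $\rr^{qr}$, $\det(\bar{\ca})=\pm1$ is the sign of the underlying permutation, and I need to count its inversions (or decompose it into transpositions). The natural model: index the standard basis of $\rr^{qr}$ by pairs $(j,i)$ with $0\le j<q$, $0\le i<r$ via the block structure (block $j$ is a copy of $\rr^r$ wait — careful with which factor is which), and $\bar{\ca}$ realizes the map $(j,i)\mapsto(i,j)$, i.e.\ the transpose of a $q\times r$ array read as a vector. The sign of this "block transpose" permutation is a classical quantity; I expect it equals $(-1)^{\binom{q}{2}\binom{r}{2}} = (-1)^{\frac14 q(q-1)r(r-1)}$, which I would establish by counting pairs of index-pairs $\{(j,i),(j',i')\}$ whose order is reversed — this happens exactly when $j<j'$ and $i>i'$ (or symmetrically), and the number of such pairs is $\binom{q}{2}\binom{r}{2}$. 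That index count is the main obstacle: getting the linear ordering conventions straight (which of $i,j$ is the "fast" index in the block vectorization) so that the reversed pairs are counted correctly. Once that is pinned down, specializing $q=r$ gives $(-1)^{\binom{q}{2}^2} = (-1)^{\binom{q}{2}} = (-1)^{\frac12 q(q-1)}$, using that $n^2\equiv n\pmod 2$.
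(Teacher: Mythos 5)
Your proposal is correct and follows essentially the same route as the paper: the same derivation of $\bar{\ca}^{ij}=e_jf_i^\top$ from $A^0=e_0f_0^\top$, the full-rank characterizations read off directly from the rank formula, the verification $\bar{\ca}(\bar{\ca})^\top=I_{qr}$ in the square case, and the determinant as the sign of the "transpose-the-$q\times r$-array" permutation with inversion count $\binom{q}{2}\binom{r}{2}=\tfrac14 qr(q-1)(r-1)$, exactly the paper's count (it sums $(q-1-m)n$ over the grid). The one place to be careful is your rank step: $\bar{\ca}$ is \emph{not} (even up to the stated hedging) a Kronecker product $E\otimes F$ — its entries are $\delta_{jm}\delta_{il}$ rather than a product $E_{ij}F_{ml}$; in the square case it is precisely the commutation (vec-permutation) matrix, which is why the determinant is not simply a product of factor determinants — so the appeal to $\mathrm{rank}(A\otimes B)=\mathrm{rank}(A)\mathrm{rank}(B)$ does not literally apply. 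Your first suggestion does work and is easily made precise: the $(j,l)$-th column of $\bar{\ca}$ equals $g_l\otimes e_j$ (standard basis of $\rr^{\mu q}$) when $l\leq\min\{\mu,r\}-1$ and $j\leq\min\{\nu,q\}-1$, and is zero otherwise, so the nonzero columns are distinct standard basis vectors and the rank is $\min\{\mu,r\}\min\{\nu,q\}$; the paper instead gets the same count by showing $\bar{\ca}(\bar{\ca})^\top$ is block diagonal with $\min\{\mu,r\}$ nonzero blocks each of rank $\min\{\nu,q\}$. Either repair is fine; as written, the Kronecker formulation is the only genuine soft spot in an otherwise faithful reconstruction.
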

\begin{proof}
Since $A^0=e_0f_0^\top$ and $\bar{\ca}^{ij}=(S_q^\top)^jA^0S_r^i$, it follows that $\bar{\ca}^{ij}=f_je^\top_i$.

The rank of $\bar{\ca}$ is equal to the rank of $\bar{\ca}(\bar{\ca})^\top$, which is easy to compute.
We get for its
 $ij$-th block  $\sum_{l=0}^{\nu-1} e_lf_i^\top f_je_l^\top = f_i^\top f_j\sum_{l=0}^qe_le_l^\top =:f_i^\top f_j I_{q,\nu-1}$,
 where $I_{q,\nu-1}=\sum_{l=0}^{\nu-1} e_le_l^\top$. It follows that ${\rm rank}(I_{q,\nu-1})=\min\{q,\nu\}$.
 Furthermore we have $f_i^\top f_j=0$ for $i\neq j$ and $f_i^\top f_i=1$ iff $i\leq r-1$. We conclude that
  $\bar{\ca}(\bar{\ca})^\top$ is block diagonal, where the diagonal $ii$-blocks are equal to $I_{q,\nu-1}$ for $i\leq r-1$ and zero otherwise.
 The number of nonzero diagonal blocks is equal to $\min\{\mu,r\}$, hence the rank of
 $\bar{\ca}(\bar{\ca})^\top$
 is
 equal
 to
 $\min\{\mu,r\}\times\min\{\nu,q\}$.

Assume that $\bar{\ca}$ is fat and that $\mu\leq r$ and $\nu\geq q$.
Then the rank of $\bar{\ca}$ equals $\mu q$, the number of rows of
$\bar{\ca}$. For the converse statement we assume that $\mu> r$ (the
case $\nu < q$ can be treated similarly). In this case the rank
becomes $r\times \min\{\nu,q\}$, which is strictly less than $\mu
q$, the number of rows of $\bar{\ca}$, which then has rank deficiency.
The dual statements for a tall $\bar{\ca}$ follow by symmetry.

Assume next that the matrix $\bar{\ca}$ is square. It has full rank
iff the two sets of conditions for the tall and fat case hold
simultaneously, which yields the assertion on invertibility.
Assume then that $\mu=r$ and $\nu=q$. By the computations in the
first part of the proof we see that the diagonal blocks of
$\bar{\ca}(\bar{\ca})^\top$ are all equal to $I_q$. Since there are now
$r$ of these blocks, we obtain that $\bar{\ca}(\bar{\ca})^\top$ is the
$qr\times qr$ identity matrix.

To compute the determinant for this case, we observe that the
columns of $\bar{\ca}$ consist of all the basis vectors of $\rr^{qr}$,
but in a permuted order. Therefore, the determinant is equal to
plus or minus one. To establish the value of the sign, we compute
the number of inversions of the permutation. This turns out to be
equal to $\frac{1}{4}qr(q-1)(r-1)$, which results in a determinant
equal to $+1$ iff this number is even, and $-1$ in the other case.
If $r=q$, then $(-1)^{\frac{1}{4}q^2(q-1)^2}=(-1)^{\half q(q-1)}$.
A way of computing the number of inversions is to write the order
of the numbering of the column basis vectors in  rectangular
array. Decomposing every number  $x\in\{0,1,\ldots,qr-1\}$ in a
unique way as $x=nq+m$ with $m\in\{0,\ldots,q-1\}$ and
$n\in\{0,\ldots,r-1\}$, we can identify every such $x$ with a pair
$(m,n)$. An inversion $i(x,y)$ occurs when $x<y$, but in the
permuted order $x$ is preceded by $y$. For every $x$ the
number of inversions $i(x,y)$ is the number of elements in the
rectangle strictly to the South-West of $x$ in the rectangular
array. So, if $x$ corresponds to $(m,n)$ then the number of
inversions $i(x,y)$ is equal to $(q-1-m)n$. Summing these numbers
for $m=0,\ldots,q-1$ and $n=0,\ldots,r-1$ yields the total number
of inversions.
\end{proof}

\begin{remark}
In the case were $\bar{\ca}$ is square and invertible, it is the permutation matrix having the property $\vect(X^\top)= \bar{\ca}\vect(X)$, for any $X\in\rr^{q\times r}$.
\end{remark}

\begin{corollary}\label{cor:rankaz}
The rank of the matrix $\ca(z)$ is for all $z\in\mathbb{C}$ equal
to $\min\{\mu,r\}\times\min\{\nu,q\}$. If $\mu q=\nu r$, then
$\ca(z)$ is square and invertible and $\det
(\ca(z))=(-1)^{\frac{1}{4}qr(q-1)(r-1)}$.
\end{corollary}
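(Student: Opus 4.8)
The plan is to deduce Corollary~\ref{cor:rankaz} directly from the two preceding theorems. By Theorem~\ref{thm:azfactor1} we have the factorization $\ca(z) = \cl_{\mu,q}(z)\bar{\ca}\cl_{\nu,r}(z)^\top$, and both $\cl_{\mu,q}(z)$ and $\cl_{\nu,r}(z)$ are invertible for every $z$ (being block lower triangular with identity diagonal blocks, as noted just before Theorem~\ref{thm:azfactor1}). Multiplying a matrix on the left and on the right by invertible square matrices does not change its rank, so $\mathrm{rank}(\ca(z)) = \mathrm{rank}(\bar{\ca})$ for all $z\in\mathbb{C}$. By Theorem~\ref{thm:ao1}, $\mathrm{rank}(\bar{\ca}) = \min\{\mu,r\}\times\min\{\nu,q\}$, which gives the first assertion.

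For the second assertion, suppose $\mu q = \nu r$, so that $\ca(z)$ is square of size $\mu q\times\nu r = qr$-compatible dimensions; more precisely it is $\mu q\times\nu r$ with $\mu q=\nu r$. In this square case the determinant is multiplicative, so $\det(\ca(z)) = \det(\cl_{\mu,q}(z))\det(\bar{\ca})\det(\cl_{\nu,r}(z)^\top)$. Since $\cl_{\mu,q}(z)$ and $\cl_{\nu,r}(z)$ are block lower triangular with identity blocks on the diagonal, both determinants equal $1$; this is exactly the final sentence of Theorem~\ref{thm:azfactor1}, giving $\det(\ca(z)) = \det(\bar{\ca})$. It remains to invoke Theorem~\ref{thm:ao1} for the value of $\det(\bar{\ca})$. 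Here one must check that the hypothesis $\mu q = \nu r$ together with invertibility of $\ca(z)$ forces $\mu = r$ and $\nu = q$; this follows because a square matrix is invertible iff it has full rank, and by the rank formula just established, full rank $qr = \mu q$ forces $\min\{\nu,q\} = q$ and $\min\{\mu,r\} = r$ simultaneously with $\mu q = \nu r$, which is only possible when $\nu = q$ and $\mu = r$. Then Theorem~\ref{thm:ao1} gives $\det(\bar{\ca}) = (-1)^{\frac{1}{4}qr(q-1)(r-1)}$, hence $\det(\ca(z)) = (-1)^{\frac{1}{4}qr(q-1)(r-1)}$, independent of $z$.

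There is essentially no obstacle here: the corollary is a bookkeeping consequence of Theorems~\ref{thm:azfactor1} and~\ref{thm:ao1}. The only point requiring a line of care is the implication ``$\mu q = \nu r$ and $\ca(z)$ invertible $\Rightarrow$ $\mu = r$, $\nu = q$'', but even this could be presented more simply: if $\mu q=\nu r$ one may simply remark that $\ca(z)$ is square and that, by the rank formula, it is invertible precisely when $\mu=r$ and $\nu=q$; in that regime $\bar{\ca}$ is the permutation matrix of Theorem~\ref{thm:ao1} and the determinant formula applies verbatim. One might also note explicitly that the independence of the rank (and the determinant) from $z$ is the genuinely informative content, and it is immediate from the fact that only $\bar{\ca}=\ca(0)$, a constant matrix, survives in the rank/determinant computation after stripping off the invertible triangular factors $\cl_{\mu,q}(z)$ and $\cl_{\nu,r}(z)$.
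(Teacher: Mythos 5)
Your proof is correct and follows essentially the same route as the paper: strip off the invertible unit-determinant factors $\cl_{\mu,q}(z)$ and $\cl_{\nu,r}(z)$ from the factorization of Theorem~\ref{thm:azfactor1}, so that both the rank and (in the square case) the determinant of $\ca(z)$ reduce to those of the constant matrix $\bar{\ca}$ computed in Theorem~\ref{thm:ao1}. Your added remark that $\mu q=\nu r$ together with full rank forces $\mu=r$ and $\nu=q$ is a sensible extra step of care that the paper's terser proof leaves implicit.
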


\begin{proof}
The assertion on the rank follows from
Theorems~\ref{thm:azfactor1} and~\ref{thm:ao1} upon noting that
the matrices $\cl_{\mu,q}(z)$ and $\cl_{\nu,r}(z)$ are invertible.
Since $\det(\cl_{\mu,q}(z))=1$, also the assertion about the
determinant follows.
\end{proof}

%Recall the definition of the matrices $\bar{\ca}$, $\cl_{\mu,q}(z)$
%and $\cl_{\nu,r}(z)$, given just before
%Theorem~\ref{thm:azfactor1}.
%\begin{corollary}\label{cor:lal}
%Let .
%Moreover $\det(\ca)=\det(\bar{\ca})$, if $\mu q=\nu r$.
%\end{corollary}

%\begin{proof}
%Immediate from Theorem~\ref{thm:ao1}.
%\end{proof}

\section{Factorizations of the matrices $\cm(z)$ and $\cn(z)$}\label{section:factorization}
\setcounter{equation}{0}

In the present section we obtain a factorization of the matrix polynomial $\cm(z)$, from which a factorization of the matrix $\cn(z)$ follows as a simple corollary.
In the next proposition we use the Kronecker symbol $\otimes$ to denote tensor products.
\begin{proposition}\label{prop:factorM}
The factorization
\begin{equation}\label{eq:m}
\mathcal{M}(z)=(I_{\mu}\otimes U_q(z))\mathcal{A}(I_{\nu}\otimes
W_r(z))
\end{equation}
holds true. Moreover, $\mathcal{M}(z)$ and $\mathcal{A}$ have
the same rank equal to $\min\{\mu,r\}\times\min\{\nu,r\}$.
\end{proposition}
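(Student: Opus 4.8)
The plan is to verify the factorization~(\ref{eq:m}) by a direct block-by-block computation, and then read off the rank statement from the fact that the outer factors $I_\mu\otimes U_q(z)$ and $I_\nu\otimes W_r(z)$ are invertible, combined with the rank of $\ca$ already computed in Corollary~\ref{cor:rankaz} (note: the statement of the proposition writes $\min\{\mu,r\}\times\min\{\nu,r\}$, but the correct value, inherited from Corollary~\ref{cor:rankaz}, is $\min\{\mu,r\}\times\min\{\nu,q\}$).

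First I would record that $I_\mu\otimes U_q(z)$ is block diagonal with $\mu$ copies of the confluent Vandermonde matrix $U_q(z)$ on the diagonal; since $\det U_q(z)$ is a nonzero constant (the $\widetilde U_q(z)$ normalization has determinant $1$, and $U_q=\widetilde U_q D_q$), this matrix is invertible for every $z$, and likewise $I_\nu\otimes W_r(z)$ is invertible. Hence multiplying $\ca$ on the left and right by these factors preserves rank, so it suffices to establish~(\ref{eq:m}) itself and then invoke Corollary~\ref{cor:rankaz}.

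For the factorization, I would compute the $(i,j)$-block of the right-hand side. Because the outer matrices are block diagonal, the $(i,j)$-block is simply $U_q(z)\,\ca^{ij}\,W_r(z) = U_q(z)\,A^{i+j}(z)\,W_r(z)$, so the claim reduces to the single-block identity
\begin{equation*}
M^{(i+j)}(z) = U_q(z)\,A^{i+j}(z)\,W_r(z),
\end{equation*}
i.e.\ $M^{(k)}(z)=U_q(z)A^k(z)W_r(z)$ for all $k\ge 0$. To see this, recall $M(z)=u(z)w(z)$, so by the Leibniz rule $M^{(k)}(z)=\sum_{a+b=k}\binom{k}{a}u^{(a)}(z)w^{(b)}(z)$. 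On the other hand, the $(a,b)$-entry of $U_q(z)A^k(z)W_r(z)$ picks out $u^{(a)}(z)$ times $w^{(b)}(z)$ weighted by the scalar $A^k_{ab}(z)=\binom{k}{a,b}z^{k-a-b}$; here one must be careful about the normalization, since the columns of $U_q(z)$ are the unnormalized derivatives $u^{(a)}(z)$ while $A^k_{ab}(z)=\frac{1}{a!b!}(\frac{\dd}{\dd z})^{a+b}z^k$ carries the factorials. Writing $u^{(a)}(z)=a!\,\widetilde u_a(z)$ and $w^{(b)}(z)=b!\,\widetilde w_b(z)$, or equivalently using~(\ref{eq:akz}) together with $u(z)_p=z^p$, $w(z)_p=z^p$, one checks that $(U_q(z)A^k(z)W_r(z))_{pl}=\sum_{a,b} z^p\binom{a}{?}\cdots$ collapses — more cleanly, one verifies entrywise that the $(p,l)$-entry of $U_q(z)A^k(z)W_r(z)$ equals $(\frac{\dd}{\dd z})^k(z^p z^l)/$ — no: the clean route is to note that $M^{(k)}(z)_{pl}=(\frac{\dd}{\dd z})^k(z^{p+l})=\frac{(p+l)!}{(p+l-k)!}z^{p+l-k}$ (with the usual convention that this is $0$ when $k>p+l$), and independently expand $(U_q(z)A^k(z)W_r(z))_{pl}=\sum_{a=0}^{q-1}\sum_{b=0}^{r-1}(u^{(a)}(z))_p\,A^k_{ab}(z)\,(w^{(b)}(z))_l$ and confirm the two agree by a Vandermonde-type binomial identity. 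This entrywise check is the computational heart of the argument.

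The main obstacle is precisely bookkeeping the factorials and the derivative-of-monomial conventions so that the Leibniz expansion of $M^{(k)}$ matches $U_q A^k W_r$ without sign or index errors; once the scalar identity $M^{(k)}(z)=U_q(z)A^k(z)W_r(z)$ is in hand, everything else is formal. An alternative that sidesteps entry-chasing: prove $M^{(k)}(z)=U_q(z)A^k(z)W_r(z)$ by induction on $k$ using the recursion~(\ref{eq:Az1}) for $A^k(z)$ together with the differentiation identities $u^{(a+1)}(z)$ expressed via a shift on $U_q(z)$ (namely $\frac{\dd}{\dd z}U_q(z)=U_q(z)(\text{shift})$ up to the $J$-matrix bookkeeping already set up in Section~\ref{section:ca}), matching the $J_q(z)$, $S_q^\top$, $S_r$ terms in~(\ref{eq:Az1}) against $\frac{\dd}{\dd z}(U_q A^{k-1}W_r)=M^{(k)}$; the base case $k=0$ is $M(z)=U_q(z)A^0W_r(z)=u(z)\,e_0f_0^\top\,w(z)=u(z)w(z)$, which is immediate. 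I would likely present the inductive version, as it reuses the machinery of Section~\ref{section:ca} and keeps the factorials hidden inside $A^k(z)$.
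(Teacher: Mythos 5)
Your overall plan (reduce the factorization to a single-block identity, verify it via the Leibniz rule, and read the rank off from the invertibility of the outer Kronecker factors plus Corollary~\ref{cor:rankaz}) is the paper's plan, and you are right that $\min\{\mu,r\}\times\min\{\nu,r\}$ in the statement should read $\min\{\mu,r\}\times\min\{\nu,q\}$. However, there is a genuine error in your identification of the middle factor. In~(\ref{eq:m}) the matrix $\ca$ is the \emph{constant} matrix $\ca(0)$: by the convention fixed around Theorem~\ref{thm:azfactor1} (``Henceforth we shall write $A^k$ instead of $A^k(0)$''), its $ij$-block is $A^{i+j}=A^{i+j}(0)$, whose only nonzero entries are $\binom{i+j}{a}$ on the anti-diagonal $a+b=i+j$, see~(\ref{eq:A}). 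You instead take the block to be $A^{i+j}(z)$ and reduce the claim to $M^{(k)}(z)=U_q(z)A^k(z)W_r(z)$, which is false: already the $(0,0)$ entry gives $\bigl(U_q(z)A^k(z)W_r(z)\bigr)_{00}=A^k_{00}(z)=z^k$, while $M^{(k)}(z)_{00}=(\frac{\dd}{\dd z})^k\,1=0$ for $k\geq 1$; a degree count shows the same thing, since the right-hand side with $A^k(z)$ would have entries of degree up to $q+r-2+k$ whereas $\cm(z)$ has degree $q+r-2$. The correct single-block identity is $M^{(k)}(z)=U_q(z)A^k\,W_r(z)$ with $A^k=A^k(0)$, and with that choice the computation is exactly the paper's: by~(\ref{eq:A}), $U_q(z)A^kW_r(z)=\sum_{a+b=k}\binom{k}{a}u^{(a)}(z)w^{(b)}(z)$ (the out-of-range terms vanish because $u^{(a)}=0$ for $a\geq q$ and $w^{(b)}=0$ for $b\geq r$), which is $M^{(k)}(z)$ by Leibniz; no Vandermonde-type convolution of factorials is needed unless you insist on checking entry by entry.

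The same misidentification breaks your proposed inductive alternative: the recursion~(\ref{eq:Az1}) for $A^k(z)$ contains the extra term $zA^{k-1}(z)$, which has no counterpart in $\frac{\dd}{\dd z}\bigl(U_q(z)A^{k-1}W_r(z)\bigr)=U_q(z)\bigl(S_q^\top A^{k-1}+A^{k-1}S_r\bigr)W_r(z)$ (using $U_q'(z)=U_q(z)S_q^\top$ and $W_r'(z)=S_rW_r(z)$). The induction does work, but only with the $z$-free recursion $A^k=A^{k-1}S_r+S_q^\top A^{k-1}$ of Proposition~\ref{prop:arec}; that version is a legitimate variant of the paper's proof. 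Your rank argument (outer factors invertible, rank of $\ca$ from Theorem~\ref{thm:ao1} or Corollary~\ref{cor:rankaz}) is fine once $\ca$ is read as $\ca(0)$.
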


\begin{proof} Computation of the product $(I_q\otimes
U_q(z))\mathcal{A}(I_q\otimes W_r(z))$ by using the definition of
$\mathcal{A}$ as block matrix,  yields a matrix that consists of
blocks
\[
U_q(z)A^kW_r(z)=\sum_{i=0}^{q-1}\sum_{j=0}^{r-1}A^k_{ij}u^{(i)}(z)w^{(j)}(z).
\]
Using Equation~(\ref{eq:A}), we get that this expression reduces
to
\begin{align*}
\sum_{i=0}^{q-1}\sum_{j=0}^{r-1}{k\choose i}
u^{(i)}(z)w^{(j)}(z)\delta_{j,k-i} & = \sum_{i=0\vee (k-r+1)}^{(q-1)\wedge k}{k\choose i}
u^{(i)}(z)w^{(k-i)}(z) \\
& = \sum_{i=0}^k {k\choose i}
u^{(i)}(z)w^{(k-i)}(z),
\end{align*}
since $u^{(i)}(z)=0$ for $i\geq q$ and $w^{(k-i)}(z)=0$ for $i\leq k-r$.
Recall that the blocks $\mathcal{M}(z)$ consist of the
derivatives $\frac{d^k}{dz^k}(u(z)w(z))$. The product rule for
differentiation then yields (\ref{eq:m}). The statement concerning the rank immediately follows from Theorem~\ref{thm:ao1}, since $U_q(z)$ and $W_r(z)$ are invertible.
\end{proof}

\begin{remark}
The factorization in Proposition~\ref{prop:factorM} exhibits a nice form of symmetry. Therefore it would be nice if also the matrix $\ca$ could be factorized in some symmetric way. There doesn't seem to be an easy way to do this. Consider a truly symmetric case, for instance $\mu=\nu=q=r=2$. In this case $\ca$ is non-singular and we have
\[
\ca=
\begin{pmatrix}
1 & 0 & 0 & 1 \\
0 & 0 & 1 & 0 \\
0 & 1 & 0 & 0 \\
1 & 0 & 0 & 2
\end{pmatrix}.
\]
A first reasonable factorization in this symmetric case would be of the form $\ca=AA^\top$, which would imply that $\ca$ is positive definite. One easily sees that this is not the case. As a next attempt, one could try to use the singular value decomposition of $\ca$, but the eigenvalues of $\ca$ are not particularly nice, so this look as a dead end too.
\end{remark}

\begin{remark}
Instead of the matrices $U_q(z)$ and $W_r(z)$, one can also use the
matrices $\widetilde{U}_q(z)$ (see~(\ref{eq:utilde})) and $\widetilde{W}_r(z)$
(see~(\ref{eq:wtilde})) to get a factorization of $\cm(z)$. One
then has to replace the matrix $\ca$ with $\widetilde{\ca}$, whose blocks $\widetilde{\ca}_{ij}$ are equal to
$\widetilde{A}^{i+j}$, where the matrices $\widetilde{A}^k$ are specified
by their elements
\begin{equation*}\label{eq:tildeA}
\widetilde{A}^k_{ij}=\left\{
\begin{array}{cl}
k! & \mbox{ if } i+j=k \\
0 & \mbox{ else. }
\end{array}
\right.
\end{equation*}
One then gets
\begin{equation}\label{eq:mm}
\mathcal{M}(z)=(I_{\mu}\otimes \widetilde{U}_q(z))\widetilde{\mathcal{A}}(I_{\nu}\otimes
\widetilde{W}_r(z)),
\end{equation}
which can be proved in the same way as~(\ref{eq:m}), or by using this identity and the relation
\begin{equation}\label{eq:dad}
\widetilde{A}^k=D_qA^kD_r.
\end{equation}
%where $D_q$ is a $q\times q$ diagonal matrix with $ii$-element equal to $i!$ and $D_r$ is its counterpart in $\rr^{r\times r}$.
\end{remark}
Next we derive a factorization of the matrix
$\cn(z)$.
% with blocks $\cn^{ij}(z)$
%($i=0,\ldots,\mu-1$, $j=0,\ldots,\nu-1$) that are defined by
%\[
%\cn^{ij}(z)=\frac{1}{i!j!}(\frac{d}{d z})^{i+j}M(z).
%\]
Let the matrix  $\hat{\ca}$ be defined by its $\mu\times \nu$ blocks $\hat{A}^{ij}$, for $i=0,\ldots \mu-1$,
$j=0,\ldots,\nu-1$, where the matrices $\hat{A}^{ij}\in \rr^{q\times r}$ have
elements

\begin{equation*}\label{eq:At}
\hat{A}^{ij}_{kl}=\left\{
\begin{array}{cl}
{i+j\choose i} & \mbox{ if } k+l=i+j \\
0 & \mbox{ else, }
\end{array}
\right.
\end{equation*}
for $k=0,\ldots, q-1$, $l=0,\ldots,r-1$.
\begin{proposition}\label{prop:factorN}
The factorization
\begin{equation}\label{eq:mt}
\cn(z)=(I_{\mu}\otimes
\widetilde{U}_q(z))\hat{\ca}(I_{\nu}\otimes \widetilde{W}_r(z))
\end{equation}
holds true.
%and $\cn(z)$ has rank equal to $\min\{\mu,r\}\times\min\{\nu,q\}$.
The matrices $\ca$ and $\hat{\ca}$ are related through
\begin{equation}\label{eq:aahat}
(D_{\mu}\otimes I_q)\hat{\ca}(D_{\nu}\otimes I_r) = (I_{\mu}\otimes D_q)\ca (I_{\nu}\otimes D_r).
\end{equation}
Further relations between $\ca$, $\widetilde{\ca}$ and $\hat{\ca}$ are
\begin{align}
\widetilde{\ca} & =  (D_\mu\otimes I_q)\hat{\ca}(D_\nu\otimes I_r) \label{eq:aa1}\\
& =  (I_\mu\otimes D_q)\ca(I_\nu\otimes D_r)\label{eq:aa2}
\end{align}\end{proposition}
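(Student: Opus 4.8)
The plan is to prove the three displayed identities \eqref{eq:mt}, \eqref{eq:aahat}, and \eqref{eq:aa1}--\eqref{eq:aa2} essentially by unravelling definitions at the block level, using the factorizations already established for $\cm(z)$ and the diagonal-conjugation relations collected in Section~\ref{section:factorization}. For \eqref{eq:mt}, I would start from the factorization~\eqref{eq:m} of $\cm(z)$, namely $\cm(z)=(I_\mu\otimes U_q(z))\ca(I_\nu\otimes W_r(z))$, and substitute $U_q(z)=\widetilde U_q(z)D_q$ and $W_r(z)=D_r\widetilde W_r(z)$ together with $\cm(z)=(D_\mu\otimes I_q)\cn(z)(D_\nu\otimes I_r)$ from~\eqref{eq:mdnd}. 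This turns~\eqref{eq:m} into
\[
(D_\mu\otimes I_q)\cn(z)(D_\nu\otimes I_r)=(I_\mu\otimes\widetilde U_q(z))(I_\mu\otimes D_q)\,\ca\,(I_\nu\otimes D_r)(I_\mu\otimes\widetilde W_r(z)),
\]
where I have used the mixed-product rule $(I_\mu\otimes U_q(z))=(I_\mu\otimes\widetilde U_q(z))(I_\mu\otimes D_q)$ and likewise on the right. Since $D_\mu\otimes I_q$ commutes with $I_\mu\otimes\widetilde U_q(z)$ (again mixed-product rule: both equal $D_\mu\otimes\widetilde U_q(z)$ in either order), I can pull $(D_\mu\otimes I_q)^{-1}$ and $(D_\nu\otimes I_r)^{-1}$ through to obtain
\[
\cn(z)=(I_\mu\otimes\widetilde U_q(z))\Big[(D_\mu\otimes I_q)^{-1}(I_\mu\otimes D_q)\,\ca\,(I_\nu\otimes D_r)(D_\nu\otimes I_r)^{-1}\Big](I_\nu\otimes\widetilde W_r(z)).
\]
So it remains to identify the bracketed matrix with $\hat\ca$; this is exactly \eqref{eq:aahat} after rearranging, so the two statements are really one computation.

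For the identification itself I would work blockwise. The $(i,j)$ block of $(I_\mu\otimes D_q)\ca(I_\nu\otimes D_r)$ is $D_q A^{i+j}D_r$, whose $(k,l)$ entry is $k!\,l!\,A^{i+j}_{kl}=k!l!\binom{i+j}{k,l}z^{i+j-k-l}$ by~\eqref{eq:akz}; at this point I only need the constant term $z=0$, which by~\eqref{eq:A} is $k!l!\binom{i+j}{k}$ when $k+l=i+j$ and $0$ otherwise. On the other side, the $(i,j)$ block of $(D_\mu\otimes I_q)\hat\ca(D_\nu\otimes I_r)$ is $i!\,j!\,\hat A^{ij}$, with $(k,l)$ entry $i!j!\binom{i+j}{i}$ when $k+l=i+j$ and $0$ otherwise. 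Comparing, the identity \eqref{eq:aahat} reduces to the scalar identity $i!j!\binom{i+j}{i}=k!l!\binom{i+j}{k}$ whenever $k+l=i+j$, and both sides equal $(i+j)!$, so this holds. (One does have to note that $A^{i+j}(z)=A^{i+j}(0)=:A^{i+j}$ only in the constant sense used in defining $\ca$; since $\ca=\ca(0)$ throughout Section~\ref{section:factorization}, there is nothing to check here.) Then \eqref{eq:aa1} is~\eqref{eq:dad} read blockwise—the $(i,j)$ block of $\widetilde\ca$ is $\widetilde A^{i+j}=D_qA^{i+j}D_r$, which is the $(i,j)$ block of $(I_\mu\otimes D_q)\ca(I_\nu\otimes D_r)$—and combining this with \eqref{eq:aahat} gives \eqref{eq:aa2}, hence \eqref{eq:aa1}$=$\eqref{eq:aa2}.

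The main obstacle, such as it is, is bookkeeping rather than mathematics: one must be careful to distinguish the four different tensor slots ($D_\mu\otimes I_q$ acting on the block index of the rows versus $I_\mu\otimes D_q$ acting inside each block, and similarly for columns), and to invoke the mixed-product property $(A\otimes B)(C\otimes D)=AC\otimes BD$ in exactly the right places so that the diagonal factors can be commuted past the Vandermonde factors. Once the slots are tracked correctly, everything collapses to the elementary identity $i!j!\binom{i+j}{i}=(i+j)!=k!l!\binom{i+j}{k}$ for $k+l=i+j$, and the proof is complete.
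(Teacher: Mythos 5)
Your proof is correct, but it takes a somewhat different route from the paper for the factorization (\ref{eq:mt}): the paper proves (\ref{eq:mt}) by redoing, with the normalized blocks, the same blockwise Leibniz-rule computation as in the proof of Proposition~\ref{prop:factorM}, and then obtains (\ref{eq:aahat}), (\ref{eq:aa1}), (\ref{eq:aa2}) by separate elementary entrywise computations; you instead prove the entrywise identity (\ref{eq:aahat}) first (reducing it to $i!\,j!\binom{i+j}{i}=(i+j)!=k!\,l!\binom{i+j}{k}$ whenever $k+l=i+j$) and then deduce (\ref{eq:mt}) from the already established (\ref{eq:m}) and (\ref{eq:mdnd}) by pulling the invertible diagonal factors $D_\mu,D_\nu,D_q,D_r$ through the Kronecker products. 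This buys a shorter argument that avoids repeating the product-rule computation and makes explicit that, given (\ref{eq:m}) and (\ref{eq:mdnd}), the statements (\ref{eq:mt}) and (\ref{eq:aahat}) are one and the same; the paper's direct computation is, in exchange, self-contained and needs no commutation or invertibility bookkeeping. Your handling of (\ref{eq:aa1})--(\ref{eq:aa2}) via (\ref{eq:dad}) is also fine. Two harmless slips: in your first display the last factor should be $I_\nu\otimes\widetilde{W}_r(z)$ rather than $I_\mu\otimes\widetilde{W}_r(z)$, and the block comparison with (\ref{eq:dad}) directly yields (\ref{eq:aa2}), after which (\ref{eq:aa1}) follows by combining with (\ref{eq:aahat}), so your labels for the last two identities are interchanged; neither affects correctness.
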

\begin{proof} The proof of the factorization~(\ref{eq:mt}) is similar to the proof of~(\ref{eq:m}).
Relation~(\ref{eq:aahat}) follows by an elementary computation. Likewise one proves~(\ref{eq:aa1}) and~(\ref{eq:aa2}).
%$\cn(z)$ and $\cm(z)$ have equal rank in view of (\ref{eq:mdnd}), so we apply Proposition~\ref{prop:factorM}.
\end{proof}

%\begin{proposition}
%The matrices $\cm(z)$ and $\cn(z)$ satisfy the following relationship.
%\begin{equation}\label{eq:nandm}
%\cm(z) = (D_\mu\otimes I_q)\cn(z)(D_\nu\otimes I_r).
%\end{equation}

%\end{proposition}

%\begin{proof}
%Equation~(\ref{eq:nandm}) readily follows from the definition of $\cm(z)$ and $\cn(z)$ and likewise
%\end{proof}

\begin{proposition}
Additionally we have the following factorizations of $\cm(z)$ and $\cn(z)$.
\begin{align}
\ca(z) & =(\widetilde{U}_\mu(z)\otimes I_q)\ca(0)(\widetilde{W}_\nu(z)\otimes I_r) \label{eq:aza0} \\
\mathcal{M}(z) & =(I_{\mu}\otimes \widetilde{U}_q(z))\mathcal{M}(0)(I_{\nu}\otimes
\widetilde{W}_r(z)) \label{eq:mzm0} \\
\cm(z) & =\big(\widetilde{U}_\mu(-z)\otimes U_q(z)\big)\ca(z)\big(\widetilde{W}_\nu(-z)\otimes W_r(z)\big) \label{eq:mzaz} \\
\cn(z) & =\big(U_\mu(z)^{-1}\otimes U_q(z)\big)\ca(z)\big(W_\nu(z)^{-1}\otimes W_r(z)\big) \label{eq:nzaz} \\
\mathcal{N}(z) & =(I_{\mu}\otimes \widetilde{U}_q(z))\mathcal{N}(0)(I_{\nu}\otimes
\widetilde{W}_r(z)) \label{eq:nzn0}
\end{align}

\end{proposition}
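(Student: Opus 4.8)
The plan is to establish the five factorizations in a cascade, reusing the identities already proved rather than recomputing block products from scratch. First I would prove \eqref{eq:aza0}. The matrices $\widetilde{U}_\mu(-z)$ and $\widetilde{W}_\nu(-z)$ are the inverses of $\widetilde{U}_\mu(z)$ and $\widetilde{W}_\nu(z)$ (a fact recorded in the notations section for $\widetilde U_q$, and equally valid for any size), so it suffices to observe that $\cl_{\mu,q}(z) = (\widetilde U_\mu(z)\otimes I_q)\cdot(\text{something})$? No — cleaner: I would compute the $ij$-block of $(\widetilde{U}_\mu(z)\otimes I_q)\ca(0)(\widetilde{W}_\nu(z)\otimes I_r)$ directly. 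Using $\widetilde U_\mu(z)^{ik}=z^{i-k}\binom{i}{k}$ and $\widetilde W_\nu(z)^{lj}=z^{j-l}\binom{j}{l}$ together with $\ca(0)^{kl}=A^{k+l}$, the block becomes $\sum_{k\le i,\,l\le j}\binom{i}{k}\binom{j}{l}z^{(i-k)+(j-l)}A^{k+l}$; substituting $k+l=m$ and using the Vandermonde convolution $\sum_{k+l=m,\,k\le i,\,l\le j}\binom{i}{k}\binom{j}{l}=\binom{i+j}{m}$ this collapses to $\sum_{m}\binom{i+j}{m}z^{(i+j)-m}A^m$, which is exactly $A^{i+j}(z)=\ca(z)^{ij}$ by the polynomial expansion in Proposition~\ref{prop:arec}. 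This is the one genuine computation and I expect it to be the main obstacle, though it is short.

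Next, \eqref{eq:mzm0} follows by combining the factorization~\eqref{eq:m} of Proposition~\ref{prop:factorM}, namely $\cm(z)=(I_\mu\otimes U_q(z))\ca(I_\nu\otimes W_r(z))$, with \eqref{eq:mm}, the analogous factorization through the tilde-matrices, $\cm(z)=(I_\mu\otimes\widetilde U_q(z))\widetilde\ca(I_\nu\otimes\widetilde W_r(z))$, evaluated at $z=0$: since $\widetilde U_q(0)=I_q$ and $\widetilde W_r(0)=I_r$ one gets $\cm(0)=\widetilde\ca$, and then \eqref{eq:mm} reads precisely $\cm(z)=(I_\mu\otimes\widetilde U_q(z))\cm(0)(I_\nu\otimes\widetilde W_r(z))$. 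The identity \eqref{eq:nzn0} is proved the same way from the factorization~\eqref{eq:mt} of Proposition~\ref{prop:factorN}: $\cn(z)=(I_\mu\otimes\widetilde U_q(z))\hat\ca(I_\nu\otimes\widetilde W_r(z))$, and at $z=0$ this gives $\cn(0)=\hat\ca$, hence $\cn(z)=(I_\mu\otimes\widetilde U_q(z))\cn(0)(I_\nu\otimes\widetilde W_r(z))$.

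For \eqref{eq:mzaz} I would start from \eqref{eq:m}, $\cm(z)=(I_\mu\otimes U_q(z))\ca(0)$ — wait, more carefully: \eqref{eq:m} is $\cm(z)=(I_\mu\otimes U_q(z))\ca(I_\nu\otimes W_r(z))$ where $\ca=\ca(0)$ in the paper's convention that $\ca$ denotes $\ca(0)$. Then substitute $\ca(0)=(\widetilde U_\mu(-z)\otimes I_q)\ca(z)(\widetilde W_\nu(-z)\otimes I_r)$, which is the inverse of \eqref{eq:aza0} using $\widetilde U_\mu(z)^{-1}=\widetilde U_\mu(-z)$ and likewise for $\widetilde W_\nu$. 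Using the mixed-product rule $(A\otimes B)(C\otimes D)=AC\otimes BD$, the two left factors $(I_\mu\otimes U_q(z))$ and $(\widetilde U_\mu(-z)\otimes I_q)$ combine to $\widetilde U_\mu(-z)\otimes U_q(z)$, and symmetrically on the right to $\widetilde W_\nu(-z)\otimes W_r(z)$, giving \eqref{eq:mzaz}. Finally \eqref{eq:nzaz}: combine \eqref{eq:mdnd}, $\cm(z)=(D_\mu\otimes I_q)\cn(z)(D_\nu\otimes I_r)$, with \eqref{eq:mzaz} to solve for $\cn(z)$, obtaining $\cn(z)=(D_\mu^{-1}\otimes I_q)(\widetilde U_\mu(-z)\otimes U_q(z))\ca(z)(\widetilde W_\nu(-z)\otimes W_r(z))(D_\nu^{-1}\otimes I_r)$; then use $U_\mu(z)=\widetilde U_\mu(z)D_\mu$, so $U_\mu(z)^{-1}=D_\mu^{-1}\widetilde U_\mu(z)^{-1}=D_\mu^{-1}\widetilde U_\mu(-z)$, and similarly $W_\nu(z)^{-1}=\widetilde W_\nu(-z)D_\nu^{-1}$, and collapse the Kronecker products once more to reach $\cn(z)=(U_\mu(z)^{-1}\otimes U_q(z))\ca(z)(W_\nu(z)^{-1}\otimes W_r(z))$. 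Everything after the first Vandermonde-convolution computation is bookkeeping with the mixed-product property, so the write-up will be brief.
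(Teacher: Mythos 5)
Your argument is correct, and every identity you invoke is available in the paper ($\widetilde{U}_q(z)^{-1}=\widetilde{U}_q(-z)$ and its $\widetilde{W}$-analogue, the relations $U_q(z)=\widetilde{U}_q(z)D_q$, $W_r(z)=D_r\widetilde{W}_r(z)$, the factorizations~(\ref{eq:m}), (\ref{eq:mm}), (\ref{eq:mt}) and the scaling relation~(\ref{eq:mdnd})); the mixed-product bookkeeping for (\ref{eq:mzaz}) and (\ref{eq:nzaz}) matches the paper's own derivation. Where you genuinely diverge is in the key identity~(\ref{eq:aza0}) and in the treatment of (\ref{eq:mzm0}), (\ref{eq:nzn0}). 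The paper proves~(\ref{eq:aza0}) by dividing the factorization $\ca(z)=\cl_{\mu,q}(z)\bar{\ca}\cl_{\nu,r}(z)^\top$ of Theorem~\ref{thm:azfactor1} at $z$ by the same factorization at $z=0$, after first showing the auxiliary identity $\cl_{\mu,q}(z)\cl_{\mu,q}(0)^{-1}=\widetilde{U}_\mu(z)\otimes I_q$; you instead compute the $ij$-block of $(\widetilde{U}_\mu(z)\otimes I_q)\ca(0)(\widetilde{W}_\nu(z)\otimes I_r)$ directly, collapse the double sum with the Vandermonde convolution $\sum_{k+l=m}\binom{i}{k}\binom{j}{l}=\binom{i+j}{m}$, and recognize the expansion $A^{i+j}(z)=\sum_m\binom{i+j}{m}z^{i+j-m}A^m$ of Proposition~\ref{prop:arec}. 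Your route is self-contained and bypasses $\bar{\ca}$ and the $\cl$-matrices entirely, at the cost of one explicit combinatorial computation; the paper's route is computation-free but leans on Theorem~\ref{thm:azfactor1} and the extra lemma about $\cp_{\mu,q}(z)$, which it also reuses later. Similarly, for (\ref{eq:mzm0}) and (\ref{eq:nzn0}) you evaluate (\ref{eq:mm}) and (\ref{eq:mt}) at $z=0$ to identify $\cm(0)=\widetilde{\ca}$ and $\cn(0)=\hat{\ca}$ and then read the claims off directly, whereas the paper obtains (\ref{eq:mzm0}) by using~(\ref{eq:m}) at $z$ and at $z=0$ and gets (\ref{eq:nzn0}) from (\ref{eq:mzm0}) together with~(\ref{eq:mdnd}); your version of (\ref{eq:nzn0}) is arguably the cleaner of the two. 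Minor polish: delete the abandoned aside at the start of the (\ref{eq:aza0}) paragraph before writing this up.
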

\begin{proof}
First we show an auxiliary result.
Let $\cp_{\mu,q}(z)=\cl_{\mu,q}(z)\cl_{\mu,q}(0)^{-1}$. Then
\begin{equation}\label{eq:pmuq}
\cp_{\mu,q}(z)=\widetilde{U}_\mu(z)\otimes I_q.
\end{equation}
Indeed, by direct computation starting from the definition of $\cl_{\mu,q}(z)$, one finds that the $ij$-block $\cp_{\mu,q}(z)^{ij}$ equals ${i\choose j}z^{i-j}I_q$ for $0\leq j\leq i\leq\mu-1$. Since $\widetilde{U}_\mu(z)_{ij}={i\choose j}z^{i-j}$, the result follows.

We now proceed to prove the indentities. Use (\ref{eq:lal1}), both for arbitrary $z$ and $z=0$, and Equation~(\ref{eq:pmuq}) to get (\ref{eq:aza0}). To obtain~(\ref{eq:mzm0}), in the same vain we use (\ref{eq:m}) twice. Then (\ref{eq:mzaz}) follows upon combining (\ref{eq:aza0}) and (\ref{eq:m}). Finally, (\ref{eq:nzaz}) follows from (\ref{eq:mzaz}) and (\ref{eq:mdnd}) and (\ref{eq:nzn0}) follows from (\ref{eq:mzm0}) and (\ref{eq:mdnd}).
\end{proof}
In the situation where $\cm(z)$ and $\cn(z)$ are square, we are able to compute their determinants.
\begin{corollary}\label{cor:dets}
Let $\mu q=\nu r$. Then all relevant matrices are square and we have the following expressions for the determinants of $\cm(z)$ and $\cn(z)$.
\begin{align*}
\det(\cm(z) & =(\prod_{i=0}^{q-1}i!)^{\mu}\,(\prod_{j=0}^{r-1}j!)^{\nu}\,\det(\ca), \\
\det(\cn(z)) & = \det(\hat{\ca}), \\
  & =(\prod_{i=0}^{\mu-1} i!)^q\,(\prod_{j=0}^{\nu-1} j!)^r\,\det(\ca),
\end{align*}
with $\det(\ca)$ given in Corollary~\ref{cor:rankaz}.
Hence the matrices $\cm(z)$ and $\cn(z)$ are unimodular.
\end{corollary}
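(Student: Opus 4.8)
The plan is to derive all three determinant formulas from the factorizations already established, treating $\det(\ca)$ from Corollary~\ref{cor:rankaz} as the known quantity and tracking only the determinantal contributions of the various auxiliary factors.

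First I would handle $\det(\cm(z))$. Starting from the factorization~(\ref{eq:m}), namely $\cm(z)=(I_\mu\otimes U_q(z))\ca(I_\nu\otimes W_r(z))$, I would use multiplicativity of the determinant together with the standard identity $\det(A\otimes B)=(\det A)^{s}(\det B)^{t}$ for $A$ of size $t$ and $B$ of size $s$. Since $\cl_{\mu,q}(z)$ is block unitriangular one has $\det U_q(z)=\det(\widetilde U_q(z)D_q)=\det D_q=\prod_{i=0}^{q-1}i!$ (this is really the classical confluent Vandermonde determinant, and $\det\widetilde U_q(z)=1$ was noted in the text via $\widetilde U_q(z)^{-1}=\widetilde U_q(-z)$), and similarly $\det W_r(z)=\prod_{j=0}^{r-1}j!$. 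Hence $\det(I_\mu\otimes U_q(z))=(\prod_{i=0}^{q-1}i!)^{\mu}$ and $\det(I_\nu\otimes W_r(z))=(\prod_{j=0}^{r-1}j!)^{\nu}$, which combined with $\det\ca$ gives the first line. The key point making this well posed is that $\mu q=\nu r$ forces $\ca$, $U_q$, $W_r$ and all the tensor factors to be square simultaneously; I would state this explicitly at the outset.

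Next, for $\det(\cn(z))$ I would use the factorization~(\ref{eq:mt}), $\cn(z)=(I_\mu\otimes\widetilde U_q(z))\hat\ca(I_\nu\otimes\widetilde W_r(z))$. Because $\det\widetilde U_q(z)=\det\widetilde W_r(z)=1$, the two outer tensor factors have determinant $1$, so $\det(\cn(z))=\det(\hat\ca)$, giving the second line and incidentally showing the determinant is independent of $z$. For the third line I would pass from $\hat\ca$ to $\ca$ using relation~(\ref{eq:aahat}), $(D_\mu\otimes I_q)\hat\ca(D_\nu\otimes I_r)=(I_\mu\otimes D_q)\ca(I_\nu\otimes D_r)$. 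Taking determinants and again applying the Kronecker determinant rule, the left side contributes $(\det D_\mu)^q(\det D_\nu)^r\det\hat\ca=(\prod_{i=0}^{\mu-1}i!)^q(\prod_{j=0}^{\nu-1}j!)^r\det\hat\ca$ and the right side contributes $(\det D_q)^\mu(\det D_r)^\nu\det\ca$. Solving for $\det\hat\ca$ and checking that the exponents on the two sides of a cross-multiplication are consistent yields $\det\hat\ca=(\prod_{i=0}^{\mu-1}i!)^q(\prod_{j=0}^{\nu-1}j!)^r\det\ca$ — one does need the observation that $\det\cm(z)/\det\cn(z)$ computed two ways is the same, or more directly that the first and third displayed lines must be compatible via~(\ref{eq:mdnd}); I would double-check this arithmetic. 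Finally, unimodularity is immediate: every factor on the right-hand side of each formula is a nonzero constant (independent of $z$), and $\det\ca=\pm1$ by Corollary~\ref{cor:rankaz}, so $\det\cm(z)$ and $\det\cn(z)$ are nonzero constants.

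The only mildly delicate step is bookkeeping the exponents in the Kronecker determinant rule and making sure the formula extracted from~(\ref{eq:aahat}) for $\det\hat\ca$ is genuinely consistent with the direct computation $\det\cn(z)=\det\hat\ca$ and with~(\ref{eq:mdnd}); there is no real obstacle, just a risk of an off-by-a-factorial slip, so I would verify the identity $(\prod_{i=0}^{q-1}i!)^\mu(\prod_{j=0}^{r-1}j!)^\nu=(\prod_{i=0}^{\mu-1}i!)^q(\prod_{j=0}^{\nu-1}j!)^r$ is \emph{not} being claimed (it is false in general) and that instead the two determinant formulas for $\det\cm(z)$ and $\det\cn(z)$ differ exactly by the factor $\det(D_\mu\otimes I_q)\det(D_\nu\otimes I_r)/\det(I_\mu\otimes D_q)\det(I_\nu\otimes D_r)$ predicted by~(\ref{eq:mdnd}).
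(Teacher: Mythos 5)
Your handling of the first two identities and of unimodularity is correct and essentially the paper's own argument: the paper derives $\det(\cm(z))$ from (\ref{eq:mm}) together with (\ref{eq:dad}) (giving $\det(\widetilde{\ca})=(\det D_q)^\mu(\det D_r)^\nu\det(\ca)$), whereas you use (\ref{eq:m}) and the confluent Vandermonde determinants $\det U_q(z)=\prod_{i=0}^{q-1}i!$, $\det W_r(z)=\prod_{j=0}^{r-1}j!$ directly — the same computation in a different dress (your appeal to $\cl_{\mu,q}(z)$ there is irrelevant; the fact you need is simply that $\widetilde{U}_q(z)$ is unitriangular). The identity $\det(\cn(z))=\det(\hat{\ca})$ is obtained from (\ref{eq:mt}) exactly as in the paper.

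The gap is at the third identity. Taking determinants in (\ref{eq:aahat}) gives, as you correctly record, $(\prod_{i=0}^{\mu-1}i!)^q(\prod_{j=0}^{\nu-1}j!)^r\,\det(\hat{\ca})=(\prod_{i=0}^{q-1}i!)^\mu(\prod_{j=0}^{r-1}j!)^\nu\,\det(\ca)$, so solving for $\det(\hat{\ca})$ places the $\mu$- and $\nu$-factorial products in the \emph{denominator}: $\det(\cn(z))=\det(\hat{\ca})=(\prod_{i=0}^{q-1}i!)^\mu(\prod_{j=0}^{r-1}j!)^\nu\,(\prod_{i=0}^{\mu-1}i!)^{-q}(\prod_{j=0}^{\nu-1}j!)^{-r}\det(\ca)$. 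Your assertion that the solve "yields" the displayed third line is therefore an algebra slip, and the cross-check you proposed but did not carry out would have exposed it: (\ref{eq:mdnd}) forces $\det(\cm(z))=(\prod_{i=0}^{\mu-1}i!)^q(\prod_{j=0}^{\nu-1}j!)^r\det(\cn(z))$, which is consistent with the quotient formula and inconsistent with the line as printed. A concrete instance: $\mu=r=1$, $\nu=q=3$ gives $\cn(z)=\widetilde{U}_3(z)$ with determinant $1$, while the printed expression evaluates to $0!\,1!\,2!\cdot\det(\ca)=2$. To be fair, the defect is inherited from the statement itself — the paper's one-line justification "the last equation follows from (\ref{eq:aahat})" fails for exactly the same reason — but a proof of the corollary as stated cannot go through at this step. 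The unimodularity conclusion is unaffected, since in either form $\det(\cn(z))$ is a nonzero constant once $\det(\ca)=\pm1$.
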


\begin{proof}
The matrix $I_{\mu}\otimes\widetilde{U}_q(z)$ has determinant one, which
gives the first equation in view of~(\ref{eq:mt}).
Use~(\ref{eq:mm}), (\ref{eq:dad}) and its consequence
\[
\det(\widetilde{\ca})=(\det(D_q))^{\mu}(\det(D_r))^{\nu}\det(\ca)
\]
to get the formula for $\det(\cm(z))$. Then the last equation follows from~(\ref{eq:aahat}).
\end{proof}
In the next sections we investigate and characterize the kernel of
the matrix $\cn(z)$ for different values of the parameters.
It turns out that characterizing the kernel of $\cn(z)$ yields
more  elegant results than characterizing the kernel of $\cm(z)$.
On the other hand, a factorization of $\cm(z)$ leads to more
elegant expressions than factorizations of $\cn(z)$. Of course, in
view of~(\ref{eq:mdnd}), results for one of the two can easily be
transformed into results for the other.  We will focus on the
kernel of $\cn(z)$ only, simple because the obtained results have
a more elegant appearance. As it turns out, it is instrumental to consider the kernel of $\cn^0(z)$ (which is $\cn(z)$ for $\mu=1$) first, since the results obtained for this case, serve as building blocks for the kernel of $\cn(z)$.

\section{Characterization of the kernel of $\cn^0(z)$}\label{section:kerneln0}
\setcounter{equation}{0}

We will investigate the kernel of $\cn^0(z)$ for different values
of $\nu$,  $q$ and $r$. This is in different notation the problem
alluded to in the introduction and earlier investigated in~\cite{ks2007} for a special case. Notice that $\cn^0(z)$ is $q\times
r\nu$-dimensional. Furthermore for $\nu\leq q$  all derivatives
$u^{(k)}(z)$ ($k\leq \nu-1$) are nonzero and moreover linearly
independent vectors, whereas $u^{(k)}(z)\equiv 0$ for $k\geq q$.
It follows that the set $\mbox{span}\{u(z),\ldots,u^{\nu-1}(z)\}$ has
dimension equal to $\min\{\nu,q\}$. It is then an easy exercise to
directly see that the rank of $\cn^0(z)$ is also equal to $\min\{\nu,q\}$. Of course, this also follows from  Proposition~\ref{prop:factorN}.
In order to characterize the kernel of $\cn^0(z)$, we have to
discern three different cases. These are $\nu\leq q$, $q+1\leq \nu
< q+r$ and $\nu\geq q+r$, each case being treated in a separate
section. In all cases, our aim is to find  {\em simple, transparant} parametrizations of the kernels.

\subsection{The case $\nu\leq q$}\label{section:kern0}

Let $F(z)$ be the $r\times (r-1)$ matrix given by
\[
F(z)=
\begin{pmatrix}
-z     &   0     & \cdots & \cdots & 0       \\
 1     &  -z     & 0      & \cdots & 0     \\
 0     &   1     & \ddots & \ddots & \vdots         \\
\vdots & \ddots  & \ddots & \ddots & 0  \\
\vdots &         & \ddots & \ddots &    -z        \\
0      & \cdots  & \cdots & 0      &     1
\end{pmatrix}.
\]
Observe the trivial but crucial property, that the columns of
$F(z)$ span the $(r-1)$-dimensional null space of $w(z)$.
%An
%alternative way to describe $F(z)$ is obtained by first
%introducing the incomplete identity matrix $J$ of the same
%dimensions given by $J_{ij}=\delta_{ij}$ and the shift matrix $S_r$, again
%of the same dimensions, whose $ij$-element equals $\delta_{i,j+1}$. Then we
%have $F(z)=-zJ+S_r$. Notice that $F'(z)=-J$ and $F^{(k)}(z)=0$ for
%$k > 1$.
\medskip\\
Next we consider the matrix $K(z)$ that has the following block
structure. It  consists of matrices $K_{ij}(z)$ ($i=0,\ldots,\nu-1$,
$j=0,\dots,\nu-1$) where each $K_{ij}(z)$ has size $r\times (r-1)$
and is given by
\begin{align*}
K_{ii}(z) & = F(z),
\\
K_{i,i+1}(z) & = -F'(z),
\end{align*}
whereas all the other blocks are equal to zero. For $r=1$ the
matrix $K(z)$ is empty and by convention we say that the columns
of $K(z)$ in this case span the vector space $\{0\}$. Note that
$K(z)$ has dimensions $\nu r\times \nu(r-1)$.
\medskip\\
The matrix $K(z)$ looks as follows, where we suppress the
dependence on $z$ and omit zero blocks.
\begin{equation}\label{eq:defk}
K=I_\nu\otimes F + S_\nu\otimes F'=
\begin{pmatrix}
F & F' & & & \\
  & F & F' & & \\
& & \ddots & \ddots & \\
& & & \ddots & F' \\
& & & & F
\end{pmatrix},
\end{equation}
where $I_\nu$ is the $\nu$-dimensional unit matrix and $S_\nu$ the $\nu$-dimensional shift matrix, with elements $S_{ij}=\delta_{i+1,j}$.

\begin{proposition}\label{prop:kerkz}
Let $\nu\leq q$. The $\nu r\times \nu (r-1)$ matrix $K(z)$
has rank $\nu (r-1)$ and is such that $\cn^0(z)K(z)\equiv 0$.
In other words,  the columns of $K(z)$ form a basis for the kernel
of $\cn^0(z)$.
\end{proposition}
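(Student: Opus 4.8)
The plan is to verify the two claims separately: first that $\cn^0(z)K(z)\equiv 0$, and then that $K(z)$ has full column rank $\nu(r-1)$; together with the rank computation $\mathrm{rank}(\cn^0(z))=\min\{\nu,q\}=\nu$ (valid since $\nu\le q$), a dimension count $\nu r = \nu + \nu(r-1)$ then forces the columns of $K(z)$ to span the whole kernel.

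First I would establish $\cn^0(z)K(z)\equiv 0$ by a direct block computation. Write $\cn^0(z)=(\cn^0_0(z),\ldots,\cn^0_{\nu-1}(z))$ with $\cn^0_j(z)=\frac{1}{j!}M^{(j)}(z)=\frac{1}{j!}(u(z)w(z))^{(j)}=\sum_{k=0}^j \frac{1}{k!(j-k)!}u^{(k)}(z)w^{(j-k)}(z)$ by the Leibniz rule. The block $K_{\cdot j}(z)$ of $K(z)$ (the $j$-th block column, of width $r-1$) has only two nonzero $r\times(r-1)$ entries: $F(z)$ in block row $j$ and $-F'(z)$ in block row $j-1$. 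Hence the $j$-th block column of the product $\cn^0(z)K(z)$ equals $\cn^0_j(z)F(z)-\cn^0_{j-1}(z)F'(z)$ (with the convention that the $j=-1$ term is absent, equivalently $\cn^0_{-1}\equiv 0$, which is consistent since $K_{0,\text{prev}}$ does not exist). So it suffices to show, for every $j=0,\ldots,\nu-1$,
\[
\cn^0_j(z)F(z)=\cn^0_{j-1}(z)F'(z).
\]
Here I would exploit the defining property that the columns of $F(z)$ span $\ker w(z)$, i.e.\ $w(z)F(z)=0$; differentiating gives $w'(z)F(z)+w(z)F'(z)=0$, so $w(z)F'(z)=-w'(z)F(z)$. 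Now compute, using the Leibniz expansion of $\cn^0_j$: in $\cn^0_j(z)F(z)=\sum_{k=0}^j\frac{1}{k!(j-k)!}u^{(k)}(z)\,w^{(j-k)}(z)F(z)$ one would like to kill the top term. Actually the cleanest route is to use the factorization $\cn(z)=(I_\mu\otimes\widetilde U_q(z))\hat{\ca}(I_\nu\otimes\widetilde W_r(z))$ of Proposition~\ref{prop:factorN} specialized to $\mu=1$: then $\cn^0(z)=\widetilde U_q(z)\,\hat{\ca}\,(I_\nu\otimes\widetilde W_r(z))$, and since $\widetilde U_q(z)$ is invertible, $\cn^0(z)K(z)=0$ iff $\hat{\ca}\,(I_\nu\otimes\widetilde W_r(z))K(z)=0$. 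One then checks that $(I_\nu\otimes\widetilde W_r(z))K(z)$ has a particularly simple form, because $\widetilde W_r(z)F(z)$ and $\widetilde W_r(z)F'(z)$ are easy to evaluate (the first column of $\widetilde W_r(z)$ is $w(z)^\top$, and $w(z)F(z)=0$), and then that $\hat{\ca}$ annihilates the result using the anti-diagonal structure of the blocks $\hat A^{ij}$. Either route reduces to the single identity $w(z)F(z)=0$ plus bookkeeping of binomial coefficients; I expect no conceptual difficulty here, only index juggling.

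Second, I would prove $\mathrm{rank}\,K(z)=\nu(r-1)$. This is where a clean argument matters, and I regard it as the main (though modest) obstacle, since $K(z)$ depends on $z$ and is not obviously full rank by inspection. The key observation is the block bidiagonal structure $K=I_\nu\otimes F(z)+S_\nu\otimes F'(z)$ together with the fact that $F(z)$ itself has full column rank $r-1$ for every $z$: indeed $F(z)$ is $r\times(r-1)$ and its bottom $(r-1)\times(r-1)$ submatrix is lower triangular with $1$'s on the diagonal, hence invertible. Consequently, in each block column, the block $F(z)$ sitting on the "diagonal" is injective. To turn this into full column rank of the whole $K(z)$, I would argue by a triangular/cascade elimination: suppose $K(z)c=0$ with $c=(c_0^\top,\ldots,c_{\nu-1}^\top)^\top$, $c_j\in\rr^{r-1}$. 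The last block row of $K(z)$ reads $F(z)c_{\nu-1}=0$, forcing $c_{\nu-1}=0$ by injectivity of $F(z)$. The $(\nu-2)$-nd block row then reads $F(z)c_{\nu-2}+F'(z)c_{\nu-1}=F(z)c_{\nu-2}=0$, so $c_{\nu-2}=0$; inductively all $c_j=0$. Hence $K(z)$ has trivial kernel and full column rank $\nu(r-1)$. (Alternatively one can delete, from each $r\times(r-1)$ diagonal block $F(z)$, the top row, leaving a square invertible lower-triangular block bidiagonal matrix whose determinant is $1$; this gives $\mathrm{rank}\ge\nu(r-1)$ at once, and $\le$ is automatic from the number of columns.)

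Finally, combining: $\mathrm{rank}(\cn^0(z))=\nu$ (either by the elementary span argument given in the text, or from Proposition~\ref{prop:factorN}), so $\dim\ker\cn^0(z)=\nu r-\nu=\nu(r-1)$. Since the $\nu(r-1)$ columns of $K(z)$ lie in $\ker\cn^0(z)$ (step one) and are linearly independent (step two), they form a basis of $\ker\cn^0(z)$. The case $r=1$ is the stated degenerate convention and needs no separate argument. This completes the proof.
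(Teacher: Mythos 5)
Your second and third steps (full column rank of $K(z)$ by back-substitution on the block bidiagonal structure, and the dimension count $\dim\ker\cn^0(z)=\nu r-\nu=\nu(r-1)$) are correct and essentially identical to the paper's. The gap is in the first and central step, the verification of $\cn^0(z)K(z)\equiv 0$: you never actually carry it out, and the identity you reduce it to is false. Taking the superdiagonal blocks of $K(z)$ to be $-F'(z)$, you claim it suffices to show $\cn^0_j(z)F(z)=\cn^0_{j-1}(z)F'(z)$. But since $M(z)F(z)=u(z)\,\bigl(w(z)F(z)\bigr)\equiv 0$, differentiating $j$ times and using Leibniz together with $F''\equiv 0$ gives $M^{(j)}(z)F(z)+jM^{(j-1)}(z)F'(z)\equiv 0$, i.e.\ $\cn^0_j(z)F(z)=-\,\cn^0_{j-1}(z)F'(z)$; a check with $q=r=2$, $j=1$ gives $M'(z)F(z)=(1,z)^\top$ while $M(z)F'(z)=(-1,-z)^\top$. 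So with your sign convention $\cn^0(z)K(z)$ does not vanish; the matrix intended in the proposition is the one displayed in (\ref{eq:defk}), $K=I_\nu\otimes F+S_\nu\otimes F'$, with $+F'$ on the superdiagonal (the in-text ``$K_{i,i+1}=-F'(z)$'' is a typo), and the identity actually needed is precisely $\cn^0_j(z)F(z)+\cn^0_{j-1}(z)F'(z)=\frac{1}{j!}\bigl(M(z)F(z)\bigr)^{(j)}=0$.

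That one-line observation --- differentiate $M(z)F(z)\equiv 0$ and use that $F(z)$ is affine in $z$ --- is exactly the paper's proof, and it is the idea missing from your write-up: you defer the computation to ``bookkeeping of binomial coefficients'' and ``index juggling'', and your alternative route via Proposition~\ref{prop:factorN} is likewise only a sketch (note $\widetilde{W}_r(z)F(z)$ is not simple beyond its first entry: $w(z)F(z)=0$, but the rows $\frac{1}{j!}w^{(j)}(z)F(z)$ for $j\geq 1$ are nonzero, so showing that $\hat{\ca}$ annihilates $(I_\nu\otimes\widetilde{W}_r(z))K(z)$ still requires the very identity you have not established). As it stands, the heart of the proposition is asserted rather than proved, and the one concrete reduction you do state carries the wrong sign.
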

\begin{proof} Pick the $j$-th block column of $K(z)$, $K^j(z)$
say ($j\in\{0,\ldots,\nu-1\}$). We compute $\cn^0(z)K^j(z)$. For
$j=0$, this reduces to $M(z)F(z)$, which is zero, since
$w(z)F(z)=0$. Then, for $j\geq 1$ we have $ K_{j-1,j}(z)=F'(z)$.
Hence, for $j\geq 1$ we get (using that $F^{(k)}(z)=0$ for $k>1$,
\begin{align*}
\cn^0(z)K^j(z) & = \sum_{i=0}^{\nu-1} \cn^0_i(z)K_{ij}(z) \\
& = \frac{1}{j!} M^{(j)}(z)F(z) + \frac{1}{(j-1)!}M^{(j-1)}(z)F'(z)\\
& =  \frac{1}{j!} (M^{(j)}(z)F(z)+ j M^{(j-1) }(z)F'(z))\\
& = \frac{1}{j!} (M(z)F(z))^{(j)}\\
& = 0.
\end{align*}
Hence $\cn^0(z)K(z)=0$, so all columns of $K(z)$ belong to $\ker
\cn^0(z)$. Since we know that the rank of $\cn^0(z)$ is equal to
$\nu$, we get that $\dim \ker \cn^0(z)=
r\nu-\nu=(r-1)\nu$, which equals the number of columns
of $K(z)$. Since $K(z)$ is upper triangular with the full rank
matrices $F(z)$ on the block-diagonal, it has full rank.
Therefore, the columns of $K(z)$ exactly span the null space of
$\cn^0(z)$.
\end{proof}

\begin{remark} A nice feature of the matrix $K(z)$ in
Proposition~\ref{prop:kerkz}   is that
it is a matrix polynomial of degree $1$, only whereas the matrix
polynomial $\cn^0(z)$ has degree $q+r-2$.
\end{remark}

\subsection{The case $q+1\leq \nu < q+r$}

Next we consider what happens if $q+1\leq \nu \leq q+r-1$. Note that
this case is void if $r=1$. Henceforth we assume that $r\geq 2$. Let
the matrix $G(z)\in\rr^{r\times r}$ have elements
$G_{ij}(z)=z^{j-i-1}$ for $j>i$ and zero else. With $S=S_r$ the
$r$-dimensional shift matrix, we have the compact expression
$G(z)=S\sum_{k=0}^\infty(zS)^k$, since $S^k=0$ for $k\geq r$. We now
give some auxiliary results.

\begin{lemma}\label{lemma:g}
Let $G(z)$ be as above and let $S$ be the shifted $r\times r$ identity matrix,
$S_{ij}=\delta_{i+1,j}$. It holds that
\begin{align}
G(z) & =S(I-zS)^{-1} \label{eq:g1}\\
\frac{1}{j!}G^{(j)}(z) & = S^{j+1}(I-zS)^{-j-1}.\label{eq:g2} \\
\frac{1}{j!}G^{(j)}(z) & = G(z)^{j+1}\label{eq:g3}\\
\frac{1}{(j+1)!}G^{(j+1)}(z) & = \frac{1}{j!}G^{(j)}(z)G(z),
\label{eq:g4}
\end{align}
and the matrices $G^{(j)}(z)$ and $G^{(i)}(z)$ commute for all $i,j\geq 0$. One also has the
following two equivalent properties
\begin{align}
\frac{1}{j!}w^{(j)}(z) & = w(z)G(z)^j.\label{eq:w1} \\
\frac{1}{(j+l)!}w^{(j+l)}(z) & = \frac{1}{j!}w^{(j)}(z)G(z)^l.
\label{eq:w2}
\end{align}
Moreover,
\begin{align}\label{eq:wgk}
(w(z)G(z))^{(k)} & = (k+1)!\,w(z)G(z)^{k+1}.
\end{align}

\end{lemma}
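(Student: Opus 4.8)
The strategy is to establish the relations in the order \eqref{eq:g1}, \eqref{eq:g2}, \eqref{eq:g3}, \eqref{eq:g4}, then the commutativity, then \eqref{eq:w1} and \eqref{eq:w2}, and finally \eqref{eq:wgk}, using each identity to bootstrap the next. Formula \eqref{eq:g1} is immediate: since $S^r=0$, the Neumann series $\sum_{k=0}^\infty (zS)^k$ is a finite sum equal to $(I-zS)^{-1}$, so $G(z)=S\sum_{k\geq 0}(zS)^k=S(I-zS)^{-1}$. One should double-check the entrywise definition $G_{ij}(z)=z^{j-i-1}$ for $j>i$ matches $S(I-zS)^{-1}$: the $(i,j)$-entry of $(I-zS)^{-1}=\sum_k z^kS^k$ is $z^{j-i}$ for $j\geq i$ and zero otherwise, and left-multiplication by $S$ shifts rows up by one, giving $z^{j-i-1}$ for $j>i$, as claimed.

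For \eqref{eq:g2} I would differentiate $G(z)=S(I-zS)^{-1}$ $j$ times. Since $\frac{\dd}{\dd z}(I-zS)^{-1}=(I-zS)^{-1}S(I-zS)^{-1}=S(I-zS)^{-2}$ (using that $S$ and $(I-zS)^{-1}$ commute, as both are polynomials in $S$), an easy induction gives $\frac{\dd^j}{\dd z^j}(I-zS)^{-1}=j!\,S^j(I-zS)^{-j-1}$, hence $\frac{1}{j!}G^{(j)}(z)=S^{j+1}(I-zS)^{-j-1}$. Then \eqref{eq:g3} follows by rewriting $S^{j+1}(I-zS)^{-j-1}=(S(I-zS)^{-1})^{j+1}=G(z)^{j+1}$, again because $S$ and $(I-zS)^{-1}$ commute. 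Relation \eqref{eq:g4} is then a one-line consequence: $\frac{1}{(j+1)!}G^{(j+1)}(z)=G(z)^{j+2}=G(z)^{j+1}G(z)=\frac{1}{j!}G^{(j)}(z)G(z)$. Commutativity of the $G^{(i)}(z)$ is immediate from \eqref{eq:g3}, since all of them are powers of the single matrix $G(z)$.

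For the $w$-identities, \eqref{eq:w1} is checked entrywise: $w(z)_k=z^k$, so $\frac{1}{j!}(w^{(j)}(z))_k=\binom{k}{j}z^{k-j}$, while $(w(z)G(z)^j)_k=\sum_{i}z^i (G(z)^j)_{ik}$, and one verifies $(G(z)^j)_{ik}=\binom{k-i-1}{j-1}z^{k-i-j}$ for $k>i+j-1$ — or, more cleanly, differentiate the scalar identity $w(z)=w(z)$ after noting $w'(z)=w(z)S^\top$... actually the cleanest route is: $w(z)G(z)$ has $k$-th entry $\sum_{i<k}z^i z^{k-i-1}=k z^{k-1}=(w'(z))_k$, so $w(z)G(z)=w'(z)$, and then $w(z)G(z)^j=w'(z)G(z)^{j-1}=\cdots$, combined with \eqref{eq:g3}, gives $w(z)G(z)^j=\frac{1}{j!}w^{(j)}(z)$ by induction using the product/chain structure; \eqref{eq:w2} is the same statement shifted. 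Finally \eqref{eq:wgk}: using \eqref{eq:w1} with $j=1$, $w(z)G(z)=w'(z)$, so $(w(z)G(z))^{(k)}=w^{(k+1)}(z)=(k+1)!\,w(z)G(z)^{k+1}$ by \eqref{eq:w1} again.

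The main obstacle, such as it is, is purely bookkeeping: getting the entrywise formula for $(G(z)^j)_{ik}$ right (the binomial $\binom{k-i-1}{j-1}$ and the exact range of nonzero entries), and being careful that $w'(z)=w(z)G(z)$ rather than off by a shift. Everything else is formal manipulation of commuting polynomials in the nilpotent matrix $S$, and the Neumann-series identity \eqref{eq:g1} does all the real work.
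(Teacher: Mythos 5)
Your proposal is correct and follows essentially the same route as the paper: the Neumann series identity $G(z)=S(I-zS)^{-1}$ does the work for \eqref{eq:g1}--\eqref{eq:g4}, and \eqref{eq:w1} is obtained from the base case $w(z)G(z)=w'(z)$ together with an induction that differentiates $w(z)G(z)^j$ using $G'(z)=G(z)^2$ (do write that induction step out in full, since the chain $wG^j=w'G^{j-1}=\cdots$ on its own presupposes what is being proved). Your derivation of \eqref{eq:wgk} directly from $w(z)G(z)=w'(z)$ and \eqref{eq:w1} is a slight, harmless simplification of the paper's inductive argument.
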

\begin{proof} To prove~(\ref{eq:g1}), we recall that $S^j=0$ for
$j\geq r$. Hence
\[
G(z)=S\sum_{k=0}^\infty (zS)^k=S(I-zS)^{-1}.
\]
Then~(\ref{eq:g2}) simply follows by differentiation, which
immediately yields~(\ref{eq:g3}), from which (\ref{eq:g4}) also
follows.

We next prove~(\ref{eq:w1}) for $j=1$. This is then equivalent to
$w'(z)(I-zS)=w(z)S$, which can be verified by elementary
calculations. We proceed by induction and assume that
(\ref{eq:w1}) holds. Then we have
\begin{align*}
\frac{1}{(j+1)!}w^{(j+1)}(z) & = \frac{1}{(j+1)!}\frac{{\rm d}}{{\rm d}z}
w^{(j)}(z) \\
& = \frac{j!}{(j+1)!}\frac{{\rm d}}{{\rm d}z}(w(z)G(z)^j) \\
& = \frac{j!}{(j+1)!}(w'(z)G(z)^j+jw(z)G(z)^{j-1}G'(z)) \\
& = \frac{j!}{(j+1)!}(w(z)G(z)^{j+1}+jw(z)G(z)^{j+1}) \\
& = w(z)G(z)^{j+1}.
\end{align*}
This proves~(\ref{eq:w1}), which is easily seen to be equivalent
to~(\ref{eq:w2}). Equation~(\ref{eq:wgk}) follows by induction, using (\ref{eq:w1}).
\end{proof}

\begin{remark}
In fact, above statements about $G(z)$ also follow from the
parallel ones concerning $w(z)$ and the relation
\[
G(z)=
\begin{pmatrix}
w(z)S \\
\vdots \\
w(z)S^r
\end{pmatrix}.
\]
Indeed, since the matrices $G(z)$ and $S$ commute, one has for
instance
\[
\frac{1}{j!}w^{(j)}(z)S  = w(z)SG(z)^j,
\]
which together with similar relations leads to~(\ref{eq:g3}).
\end{remark}

\begin{lemma}\label{lemma:mug}
One has for $j\geq 1$
\begin{equation}\label{eq:mg}
\frac{1}{j!}(M^{(j)}(z)-u^{(j)}(z)w(z))=\frac{1}{(j-1)!}M^{(j-1)}(z)G(z).
\end{equation}
For $j\geq q$, this reduces to
\begin{equation}\label{eq:mgq}
\frac{1}{j!}M^{(j)}(z)=\frac{1}{(j-1)!}M^{(j-1)}(z)G(z).
\end{equation}
More generally, one has for $m,p\geq 0$
\begin{equation}\label{eq:mgpq}
\frac{M^{(q-1+p+m)}(z)}{(q-1+p+m)!}=\frac{M^{(q-1+m)}(z)}{(q-1+m)!}G(z)^p=\frac{M^{(q-1)}(z)}{(q-1)!}G(z)^{p+m}.
\end{equation}

\end{lemma}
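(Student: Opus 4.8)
The plan is to prove the chain of identities in Lemma~\ref{lemma:mug} by starting from the product rule applied to $M(z)=u(z)w(z)$ and then bootstrapping via Lemma~\ref{lemma:g}. First I would establish~(\ref{eq:mg}): differentiating $M(z)=u(z)w(z)$ exactly $j$ times with the Leibniz rule gives
\[
M^{(j)}(z)=\sum_{k=0}^{j}{j\choose k}u^{(k)}(z)w^{(j-k)}(z),
\]
so that $M^{(j)}(z)-u^{(j)}(z)w(z)=\sum_{k=0}^{j-1}{j\choose k}u^{(k)}(z)w^{(j-k)}(z)$. Dividing by $j!$ and writing ${j\choose k}/j!=\frac{1}{k!}\cdot\frac{1}{(j-k)!}$, each term becomes $\widetilde u_k(z)\,\tfrac{1}{(j-k)!}w^{(j-k)}(z)$, and then~(\ref{eq:w2}) (or~(\ref{eq:w1})) rewrites $\tfrac{1}{(j-k)!}w^{(j-k)}(z)=\tfrac{1}{(j-1-k)!}w^{(j-1-k)}(z)G(z)$ for $0\le k\le j-1$; factoring $G(z)$ out on the right and reassembling the sum (which is exactly the Leibniz expansion of $M^{(j-1)}(z)/(j-1)!$) yields the right-hand side of~(\ref{eq:mg}). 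I should double-check the index bookkeeping — the term $k=j-1$ needs $w^{(0)}(z)=w(z)$, which~(\ref{eq:w1}) handles with $j=1$.

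Next, (\ref{eq:mgq}) is immediate: for $j\ge q$ we have $u^{(j)}(z)=0$, so the correction term in~(\ref{eq:mg}) drops out. Finally, for~(\ref{eq:mgpq}) I would iterate~(\ref{eq:mgq}). Starting from $j=q-1+m+1\ge q$ and applying~(\ref{eq:mgq}) repeatedly $p$ times,
\[
\frac{M^{(q-1+m+p)}(z)}{(q-1+m+p)!}=\frac{M^{(q-1+m)}(z)}{(q-1+m)!}G(z)^p,
\]
and a further iteration (applying the same relation $m$ more times, starting from $q-1$, valid since every intermediate order is $\ge q-1$, and for the very step from $q-1$ to $q$ one uses that $M^{(q-1)}(z)/(q-1)!$ times $G(z)$ equals $M^{(q)}(z)/q!$ by~(\ref{eq:mgq}) with $j=q$) gives the second equality $\frac{M^{(q-1+m)}(z)}{(q-1+m)!}=\frac{M^{(q-1)}(z)}{(q-1)!}G(z)^{m}$. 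Combining the two displays yields~(\ref{eq:mgpq}). A cleaner alternative for~(\ref{eq:mgpq}) is to note that $M^{(q-1)}(z)=u^{(q-1)}(z)w(z)$ plus lower-order cross terms, but since $u^{(k)}(z)=0$ for $k\ge q$ one can also directly expand $M^{(q-1+n)}(z)/(q-1+n)!=\sum_{k}\widetilde u_k(z)\tfrac{1}{(q-1+n-k)!}w^{(q-1+n-k)}(z)$ and use~(\ref{eq:w2}) with $l=n$ to pull out $G(z)^n$, giving the result in one shot.

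The main obstacle I expect is purely the index arithmetic in step one: making sure the binomial-to-reciprocal-factorial splitting is done correctly, that the truncation $u^{(k)}(z)=0$ for $k\ge q$ is invoked only where legitimate, and that after factoring $G(z)$ the residual sum is genuinely the Leibniz expansion of $\frac{1}{(j-1)!}M^{(j-1)}(z)$ rather than something off by a shift. There is no conceptual difficulty — every ingredient (Leibniz rule, and the identities~(\ref{eq:w1})–(\ref{eq:w2}) for the powers of $G(z)$ acting on $w(z)$) is already in place — so the proof is essentially a bookkeeping exercise, and the write-up can afford to be terse, presenting~(\ref{eq:mg}) in detail and then deriving~(\ref{eq:mgq}) and~(\ref{eq:mgpq}) as short corollaries by iteration.
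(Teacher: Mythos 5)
Your proposal is correct and follows essentially the same route as the paper: Leibniz's rule for $M^{(j)}=(uw)^{(j)}$ combined with the identity~(\ref{eq:w1})/(\ref{eq:w2}) to absorb one derivative of $w$ into a factor $G(z)$, after which~(\ref{eq:mgq}) follows from $u^{(j)}=0$ for $j\geq q$ and~(\ref{eq:mgpq}) by iteration. The only difference is cosmetic (you subtract $u^{(j)}(z)w(z)$ before dividing by $j!$, the paper isolates that term after dividing), and your index bookkeeping, including the final step with $j=q$, checks out.
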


\begin{proof}
Recall that $M(z)=u(z)w(z)$. Below we use~(\ref{eq:w2}) to compute
\begin{align*}
\frac{1}{j!}M^{(j)}(z) & = \frac{1}{j!}\sum_{i=0}^j{j\choose
i}u^{(j-i)}(z)w^{(i)}(z) \\
& = \sum_{i=0}^j\frac{u^{(j-i)}(z)}{(j-i)!}\frac{w^{(i)}(z)}{i!} \\
& = \sum_{i=1}^j\frac{u^{(j-i)}(z)}{(j-i)!}\frac{w^{(i-1)}(z)}{(i-1)!}G(z) + \frac{u^{(j)}(z)}{j!}w(z) \\
& = \frac{1}{(j-1)!}\sum_{i=1}^j{j-1\choose i-1}u^{(j-i)}(z)w^{(i-1)}(z)G(z) + \frac{u^{(j)}(z)}{j!} w(z)\\
& = \frac{1}{(j-1)!}\sum_{l=0}^{j-1}{j-1\choose l}u^{(j-1-l)}(z)w^{(l)}(z)G(z) + \frac{u^{(j)}(z)}{j!}w(z) \\
& = \frac{1}{(j-1)!}M^{(j-1)}(z)G(z) + \frac{u^{(j)}(z)}{j!}w(z).
\end{align*}
Equation~(\ref{eq:mgq}) follows from (\ref{eq:mg}), because $u^{(j)}=0$ for $j\geq q$. Finally, (\ref{eq:mgpq}) follows by iteration of (\ref{eq:mgq}).
\end{proof}
We consider the
matrix polynomial $\bar{K}(z)$ which for the present case has the
following structure. For $i,j=0,\ldots, q-1$ it has blocks
$\bar{K}_{ij}(z)$ of size $r\times (r-1)$ such that $\bar{K}_{jj}(z)=F(z)$ and
$\bar{K}_{j-1,j}(z)=F'(z)$, $F(z)$ as before. For $i,j=q,\ldots,\nu-1$ the
blocks are $\bar{K}_{jj}(z)=I_r$ and $\bar{K}_{j-1,j}(z)=-G(z)$, all of size
$r\times r$. Finally, we have that $\bar{K}_{q-1,q}(z)=-G(z)$. All other
blocks are equal to zero. Notice that $\bar{K}(z)$ is of size
$\nu r\times (\nu r-q)$. One easily verifies that $\bar{K}(z)$ has
full column rank.
\medskip\\
The matrix $\bar{K}(z)$ looks as follows, where again we
suppress the dependence on $z$ and omit zero blocks.
\begin{equation}\label{eq:kbar}
\bar{K}=
\pmatnocross\begin{pmat}({....|....})
F & F' & & & & & & & & \cr
& F & F' & & & & & & & \cr
& & \ddots & \ddots & & & & & & \cr
& & & \ddots & F' & & & & & \cr
& & & & F & -G & & & & \cr\-
& & & & & I_r & -G & & & \cr
& & & & & & I_r & -G & & \cr
& & & & & & & \ddots & \ddots & \cr
& & & & & & & & \ddots & -G \cr
& & & & & & & & & I_r \cr
\end{pmat}.
\end{equation}
A compact expression for $\bar{K}$ is
\begin{equation}\label{eq:bark}
\bar{K}=
\begin{pmatrix}
I_q\otimes F + S_q\otimes F' & -\ell_q \mathrm{f}_{\nu-q}^\top \otimes G \\
0 & I_{\nu-q}\otimes I_r - S_{\nu-q}\otimes G
\end{pmatrix},
\end{equation}
where $\mathrm{f}_{\nu-q}$ is the first standard basis vector of $\rr^{\nu-q}$ and $\ell_q$ the last standard basis vector of $\rr^q$.

\begin{proposition}\label{prop:kerkz1}
Let $q+1\leq \nu \leq  q+r-1$. The $\nu r\times (\nu r- q)$ matrix
$\bar{K}(z)$ is such that $\cn^0(z)\bar{K}(z)\equiv 0$. In other words,  the
columns of $\bar{K}(z)$ form a basis for the kernel of $\cn^0(z)$.
\end{proposition}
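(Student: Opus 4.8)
The plan is to mimic the proof of Proposition~\ref{prop:kerkz}, verifying first that every block column of $\bar{K}(z)$ lies in the kernel of $\cn^0(z)$, and then counting dimensions. Write $\bar{K}^j(z)$ for the $j$-th block column, $j=0,\ldots,\nu-1$. For the columns with $j\leq q-1$ the relevant blocks are $\bar{K}_{jj}=F(z)$ and (for $j\geq 1$) $\bar{K}_{j-1,j}=F'(z)$, exactly as in the case $\nu\leq q$, so the computation $\cn^0(z)\bar{K}^j(z)=\frac{1}{j!}(M(z)F(z))^{(j)}=0$ goes through verbatim, using $w(z)F(z)=0$ and $F^{(k)}=0$ for $k\geq 2$. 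The genuinely new cases are $j\geq q$, where the nonzero blocks are $\bar{K}_{jj}=I_r$, $\bar{K}_{j-1,j}=-G(z)$ (and for $j=q$ the subdiagonal block is $\bar{K}_{q-1,q}=-G(z)$ as well, sitting in the $(q-1)$-th block row which is of size $r\times r$ only after we note $F(z)$ has $r$ rows). For such $j$ one computes
\[
\cn^0(z)\bar{K}^j(z)=\cn^0_j(z)I_r-\cn^0_{j-1}(z)G(z)=\frac{M^{(j)}(z)}{j!}-\frac{M^{(j-1)}(z)}{(j-1)!}G(z),
\]
which is precisely zero by Equation~(\ref{eq:mgq}) of Lemma~\ref{lemma:mug}, since $j\geq q$. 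The one column needing slightly more care is $j=q$: there the two contributing blocks are $\bar{K}_{q-1,q}=-G(z)$ (in block row $q-1$) and $\bar{K}_{qq}=I_r$ (in block row $q$), giving the same expression $\frac{M^{(q)}(z)}{q!}-\frac{M^{(q-1)}(z)}{(q-1)!}G(z)$, which vanishes by~(\ref{eq:mgq}) with $j=q$. Hence $\cn^0(z)\bar{K}(z)\equiv 0$.

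It remains to check that $\bar{K}(z)$ has the right number of columns and full column rank. The size is $\nu r\times(\nu r-q)$ by construction, and since the rank of $\cn^0(z)$ is $\min\{\nu,q\}=q$ (because $\nu\geq q+1$), the kernel has dimension $\nu r-q$, matching the number of columns of $\bar{K}(z)$. Full column rank follows from the block-triangular structure displayed in~(\ref{eq:kbar}) or~(\ref{eq:bark}): the compact form exhibits $\bar{K}(z)$ as block upper triangular with diagonal blocks $I_q\otimes F(z)+S_q\otimes F'(z)$ (upper triangular with the full-rank $F(z)$ on its diagonal, hence of full column rank $q(r-1)$) and $I_{\nu-q}\otimes I_r-S_{\nu-q}\otimes G(z)$ (upper triangular with identity blocks on the diagonal, hence invertible of size $(\nu-q)r$). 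Therefore $\bar{K}(z)$ has column rank $q(r-1)+(\nu-q)r=\nu r-q$, so its columns form a basis of $\ker\cn^0(z)$.

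I expect the only real subtlety to be bookkeeping at the ``seam'' $j=q$ — making sure the block row of height $r$ carrying $F(z)$ (row $q-1$) and the block row of height $r$ carrying $I_r$ (row $q$) interact correctly, and that the single off-pattern block $\bar{K}_{q-1,q}=-G(z)$ is placed so that the telescoping identity $(M(z)F(z))^{(j)}$ for $j<q$ and $M^{(j)}(z)/j!=M^{(j-1)}(z)G(z)/(j-1)!$ for $j\geq q$ line up to cover all block columns without gaps or overlaps. Once the indices are pinned down, every step reduces to either $w(z)F(z)=0$ together with $F^{(k)}=0$ for $k\geq 2$, or to Lemma~\ref{lemma:mug}; no new computation is needed. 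The dimension count and the triangularity argument for full rank are then immediate from Proposition~\ref{prop:factorN} and the explicit form~(\ref{eq:bark}).
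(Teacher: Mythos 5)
Your proof is correct and follows essentially the same route as the paper: columns $j\leq q-1$ are handled exactly as in Proposition~\ref{prop:kerkz} via $w(z)F(z)=0$, columns $j\geq q$ (including the seam $j=q$ with $\bar{K}_{q-1,q}=-G(z)$) reduce to Equation~(\ref{eq:mgq}), and the dimension count $\operatorname{rank}\cn^0(z)=q$ together with the full column rank $\nu r-q$ of $\bar{K}(z)$ finishes the argument. Your explicit triangularity argument for the full column rank of $\bar{K}(z)$ merely spells out what the paper asserts as easily verified.
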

\begin{proof}  Next we proceed as in the proof of
Proposition~\ref{prop:kerkz}. We pick the $j$-block column
$\bar{K}^j(z)$. If $0\leq j\leq q-1$, then $\bar{K}_{jj}(z)=F(z)$ and
$\bar{K}_{j-1,j}(z)=F'(z)$, whereas all other $\bar{K}_{ij}(z)$ are zero. The
computation of $\cn^0(z)\bar{K}^j(z)$ is then exactly as in the previous
proof. Next we consider the case where $q \leq j\leq \nu-1$, which
is quite different. Again we pick the $j$-th block column of
$\bar{K}(z)$. Recall the definition of the $\bar{K}_{ij}(z)$ for this case. We
get,
\begin{align*}
\cn^0(z)\bar{K}^j(z) & = \sum_{i=0}^{\nu-1} \cn^0_{i}(z)\bar{K}_{ij}(z) \\
& = \frac{1}{j!}M^{(j)}(z)-\frac{1}{(j-1)!}M^{(j-1)}(z)G(z) \\
& = 0,
\end{align*}
in view of Equation~(\ref{eq:mgq}). This shows that $\bar{K}(z)$ belongs to the kernel of $\cn^0(z)$.
Since $\cn^0(z)$ has rank $q$, the dimension of the kernel is
equal to $\nu r-q$, which is equal to the rank of $\bar{K}(z)$. Hence
the columns of $\bar{K}(z)$ span this kernel.
\end{proof}
Post-multiplying the matrix $\bar{K}(z)$ by
\[
\pmatnocross\begin{pmat}({....|....})
I_{r-1} & 0 & & & & & & & & \cr
& I_{r-1}& 0 & & & & & & & \cr
& & \ddots & \ddots & & & & & & \cr
& & & \ddots & 0 & & & & & \cr
& & & & I_{r-1} & & & & & \cr\-
& & & & & I_r & G & \cdots & \cdots & G^{\nu-q-1}\cr
& & & & & & I_r & G & \cdots & G^{\nu-q-2}\cr
& & & & & & & \ddots & \ddots & \cr
& & & & & & & & \ddots & G \cr
& & & & & & & & & I_r \cr
\end{pmat},
\]
we obtain
\[
\pmatnocross\begin{pmat}({....|....})
F & F' & & & & & & & & \cr
& F & F' & & & & & & & \cr
& & \ddots & \ddots & & & & & & \cr
& & & \ddots & F' & & & & & \cr
& & & & F & -G & -G^2 & \cdots & \cdots & -G^{\nu-q}\cr\-
& & & & & I_r &  & & & \cr
& & & & & & I_r & & & \cr
& & & & & & & \ddots &  & \cr
& & & & & & & & \ddots &  \cr
& & & & & & & & & I_r \cr
\end{pmat},
\]
an alternative matrix whose columns span $\ker\cn^0(z)$.

\subsection{The case $\nu\geq q+r$}

For this case, the kernel of $\cn^0(z)$ is closely related to what we have obtained in the previous case.

\begin{proposition}\label{prop:kerkz2}
Let $\nu \geq q+r$. Consider the matrix $\bar{K}_*(z)$ of~(\ref{eq:kbar}) in the
special case that $\nu=q+r-1$, then $\bar{K}_*(z)$ has $(q+r)(r-1)$
columns. We have that $\ker \cn^0(z)$, which is now
$\nu r-q$-dimensional, is the product of  the space spanned by
the columns of $\bar{K}_*(z)$ and $\mathbb{R}^{r(\nu+1-q-r)}$.
\end{proposition}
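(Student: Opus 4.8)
The plan is to reduce the case $\nu \ge q+r$ to the already-settled boundary case $\nu = q+r-1$ by isolating the block columns of $\cn^0(z)$ whose indices lie in the range $q+r-1, \ldots, \nu-1$. The key structural observation is that for $j \ge q-1$ the normalized derivative blocks $\cn^0_j(z) = \tfrac{1}{j!}M^{(j)}(z)$ are all scalar multiples — in the matrix sense, right multiples by powers of $G(z)$ — of a single block, namely $\cn^0_{q-1}(z)$. This is precisely Equation~\eqref{eq:mgpq}: for $m,p \ge 0$ one has $\tfrac{1}{(q-1+p+m)!}M^{(q-1+p+m)}(z) = \tfrac{1}{(q-1)!}M^{(q-1)}(z)G(z)^{p+m}$. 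Consequently, once $\nu \ge q+r$, the block columns indexed $q+r-1$ through $\nu-1$ contribute $\tfrac{1}{(q-1)!}M^{(q-1)}(z)G(z)^{k}$ for $k = r, r+1, \ldots, \nu-q$; but $G(z) = S(I-zS)^{-1}$ is nilpotent-based and satisfies $G(z)^k = 0$ for $k \ge r$ (since $S^r = 0$ and the factor $(I-zS)^{-1}$ commutes with $S$). Hence all these extra block columns of $\cn^0(z)$ are identically zero.

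First I would write $\cn^0(z) = (\cn^0_0(z), \ldots, \cn^0_{q+r-2}(z), 0, \ldots, 0)$, where the trailing zero blocks are $\nu - (q+r-1) = \nu+1-q-r$ in number, each of width $r$. This immediately decomposes $\ker \cn^0(z)$ as a direct sum: a vector $v \in \rr^{\nu r}$ lies in the kernel iff its projection onto the first $(q+r-1)r$ coordinates lies in the kernel of the truncated matrix $(\cn^0_0(z), \ldots, \cn^0_{q+r-2}(z))$ — which is exactly $\cn^0(z)$ in the case $\nu = q+r-1$, with kernel spanned by the columns of $\bar K_*(z)$ by Proposition~\ref{prop:kerkz1} — while the remaining $r(\nu+1-q-r)$ coordinates are unconstrained. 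That gives $\ker \cn^0(z) = \mathrm{span}\{\text{columns of }\bar K_*(z)\} \times \rr^{r(\nu+1-q-r)}$, which is the assertion. I should also double-check the dimension count for consistency: $\bar K_*(z)$ has $(q+r)(r-1)$ columns per the proposition's statement (here $\nu = q+r-1$, so $\nu r - q = (q+r-1)r - q$; I would verify $(q+r-1)r - q = (q+r)(r-1)$, i.e. $qr + r^2 - r - q = qr + r^2 - q - r$, which holds), and adjoining $r(\nu+1-q-r)$ free dimensions gives total dimension $(q+r-1)r - q + r(\nu+1-q-r) = \nu r - q$, matching $\dim\ker\cn^0(z)$ as given by Proposition~\ref{prop:factorN} since the rank is $\min\{\nu,q\} = q$.

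The main obstacle — really the only substantive point — is justifying cleanly that the block columns of $\cn^0(z)$ beyond index $q+r-2$ vanish, and I would handle it exactly as above: combine $G(z)^k = 0$ for $k\ge r$ (from $G(z) = S(I-zS)^{-1}$ and $S^r=0$, Lemma~\ref{lemma:g}) with the chain of identities in \eqref{eq:mgpq} to get $\cn^0_j(z) = \tfrac{1}{(q-1)!}M^{(q-1)}(z)G(z)^{j-q+1} = 0$ for $j \ge q+r-1$. Everything else is the elementary block-triangular kernel decomposition, so no further difficulty is expected; one should just be careful to state the Cartesian product in the same coordinate order as the paper (columns of $\bar K_*$ embedded in the first $(q+r-1)r$ coordinates, the free $\rr^{r(\nu+1-q-r)}$ factor in the last coordinates).
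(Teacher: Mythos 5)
Your proposal is correct and follows essentially the same route as the paper: write $\cn^0(z)$ as the $\nu=q+r-1$ matrix padded with zero block columns, apply Proposition~\ref{prop:kerkz1} to the truncated part, and let the trailing $r(\nu+1-q-r)$ coordinates be free. The only (harmless) difference is that you justify the vanishing of the blocks $\cn^0_j(z)$ for $j\geq q+r-1$ via~(\ref{eq:mgpq}) together with $G(z)^k=S_r^k(I-zS_r)^{-k}=0$ for $k\geq r$, whereas the paper simply notes that $M(z)=u(z)w(z)$ has degree $q+r-2$, so all its derivatives of order at least $q+r-1$ are identically zero.
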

\begin{proof} Since the highest power of $z$ that
appears in $\cn^0(z)$ is $q+r-2$, we have that the matrices
$M^{(j)}(z)$ are identically zero if $\nu\geq q+r-1$, whereas the
matrix $\big(\frac{1}{0!}M(z),\ldots,\frac{1}{(q+r-2)!}M^{(q+r-2)}(z)\big)$ is
the same as the matrix $\cn^0(z)$ for the case $\nu=q+r-1$. The
assertion follows by application of Proposition~\ref{prop:kerkz1}.
\end{proof}

\section{Characterization of the kernel of $\cn(z)$}\label{section:kerneln}
\setcounter{equation}{0}

We have seen that we had to distinguish three different cases to
describe the kernel of $\cn^0(z)$. The same distinction has to be
made in the present section.

\subsection{The case $\nu\leq q$}\label{section:nlq}

First we introduce some more notation. For the matrix $K(z)$ of
(\ref{eq:defk}) we now write $K_0(z)$ and for $k=1,\ldots,r-2$, we
define $K_k(z)$ in the same way as $K(z)$, but now with $F$
replaced with $F_k(z)$, a $(r-k)\times(r-1-k)$ matrix having the
same structure as the original $F(z)$ of Section~\ref{section:kern0}, so $F_k(z)_{ij}=-z$, if $i=j$ and $F_k(z)_{ij}=1$, if $i=j+1$. In particular $F_0(z)=F(z)$. Formally, for $1\leq j\leq r-2$, we have
\[
F_k(z)= \begin{pmatrix}I_{r-k} & 0_{(r-k)\times k}\end{pmatrix}F(z)\begin{pmatrix}I_{r-k-1} \\ 0_{k\times (k-j-1)}\end{pmatrix},
\]
whereas $F_k(z)$ is the empty matrix for $k\geq r-1$. By $\ck^{\mu-1}(z)$ for
$\mu< r$, we denote the product $K_0(z)K_1(z)\cdots K_{\mu-1}(z)$ of size $\nu r\times \nu(r-\mu)$,
whereas we take $\ck^{\mu-1}(z)$ the   zero matrix for $\mu\geq r$.

For $j=0,\ldots,r-1$ we put
\[
w_j(z)=w(z)\begin{pmatrix}
I_{r-j} \\
0_{j\times(r-j)}
\end{pmatrix}
=(1,z,\ldots,z^{r-1-j})
\]
and $M_j(z)=u(z)w_j(z)$. With this convention, we have $w_0=w$, $M_0=M$.
%Likewise, we put $G_j(z)=(I_r\; 0_{(r-j)\times j})G(z)$ and
%Let $F_j(z)$ be the upper left block of size $(r-j)\times (r-1-j)$ of $F$, so $F_0(z)=F(z)$ and $F_j(z)$ has the same structure as $F(z)$, but now of size $(r-j)\times (r-j-1)$.

\begin{lemma}
Let
$w_1(z)=(1,z,\ldots,z^{r-2})$ and
$M_1(z)=u(z)w_1(z)$. For $j\geq 1$ one has
\begin{align}
w^{(j)}(z)F(z) & = -jw^{(j-1)}(z)F'(z)=jw_1^{(j-1)}(z)\label{eq:wf}\\
M^{(j)}(z)F(z) & = -jM^{(j-1)}(z)F'(z)=jM_1^{(j-1)}(z).\label{eq:mff}
\end{align}
\end{lemma}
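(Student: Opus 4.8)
The plan is to prove the two chains of equalities in~(\ref{eq:wf}) and~(\ref{eq:mff}) separately, starting with the vector identity~(\ref{eq:wf}) and then lifting it to the matrix identity~(\ref{eq:mff}) by multiplying on the left by $u(z)$, which commutes with everything on the right (since $F(z)$, $F'(z)$ act on the right factor only). For~(\ref{eq:mff}), once~(\ref{eq:wf}) is in hand, write $M^{(j)}(z)=\sum_{i=0}^j\binom{j}{i}u^{(j-i)}(z)w^{(i)}(z)$ via the Leibniz rule, multiply on the right by $F(z)$, and push the $F(z)$ through each $w^{(i)}(z)$ using~(\ref{eq:wf}); alternatively — and more cleanly — just multiply~(\ref{eq:wf}) on the left by $u(z)$ after the Leibniz expansion is recognised as $(u(z)w(z))^{(j)}$. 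So the matrix statement is an immediate corollary once the vector statement is established, and the real content is~(\ref{eq:wf}).

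For~(\ref{eq:wf}), I would prove the first equality $w^{(j)}(z)F(z)=-j\,w^{(j-1)}(z)F'(z)$ by induction on $j$, or better, directly: the columns of $F(z)$ span $\ker w(z)$ (the crucial property noted just before~(\ref{eq:defk})), so differentiating $w(z)F(z)=0$ once gives $w'(z)F(z)+w(z)F'(z)=0$, and differentiating $j$ times via Leibniz gives $\sum_{i=0}^j\binom{j}{i}w^{(i)}(z)F^{(j-i)}(z)=0$; since $F$ is affine in $z$, $F^{(k)}(z)=0$ for $k\geq 2$, so only the $i=j$ and $i=j-1$ terms survive, yielding $w^{(j)}(z)F(z)+j\,w^{(j-1)}(z)F'(z)=0$, which is exactly the first equality. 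For the second equality, $-j\,w^{(j-1)}(z)F'(z)=j\,w_1^{(j-1)}(z)$, it suffices (dividing by $j$ and shifting the index) to check $-w^{(k)}(z)F'(z)=w_1^{(k)}(z)$ for all $k\geq 0$, and since both sides are derivatives it suffices to check $k=0$: $-w(z)F'(z)=w_1(z)=(1,z,\ldots,z^{r-2})$. This is a direct computation from the explicit form of $F(z)$: $F'(z)$ has $-1$ on the diagonal and $0$ elsewhere among the first $r-1$ columns, so $-w(z)F'(z)$ picks out $(1,z,\ldots,z^{r-2})$, matching the stated $w_1(z)$ (note $w_1$ here is $w$ with its last entry dropped, consistent with the $w_j$ notation introduced above).

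I do not anticipate a serious obstacle here — this lemma is essentially bookkeeping. The one place requiring a small amount of care is matching sign conventions and index ranges in the explicit computation $-w(z)F'(z)=w_1(z)$, since $F(z)$ is $r\times(r-1)$ and one must be careful that the last column of $F(z)$ contributes correctly and that the truncation to $w_1$ (length $r-1$) is the right one. The rest is the Leibniz rule together with the fact that $F$ is a degree-one matrix polynomial, so all its derivatives beyond the first vanish. The matrix identity~(\ref{eq:mff}) then follows verbatim by left-multiplication with $u(z)$, using that $u(z)(w(z)F(z))=(u(z)w(z))F(z)=M(z)F(z)$ and that the Leibniz expansion commutes with this left factor.
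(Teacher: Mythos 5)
Your proposal is correct and follows essentially the same route as the paper's proof: differentiate $w(z)F(z)=0$ by Leibniz, use that $F(z)$ has degree one so only the two top terms survive, check $-w(z)F'(z)=w_1(z)$ by direct computation, and obtain the matrix identity by applying the identical argument to $M(z)F(z)=u(z)\bigl(w(z)F(z)\bigr)=0$. No gaps.
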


\begin{proof}
Since $w(z)F(z)=0$, also
$\frac{{\rm d}^j}{{\rm d}z^j}(w(z)F(z))=0$ for all $j\geq 1$. It then follows
that $w^{(j)}(z)F(z)+jw^{(j-1)}(z)F'(z)=0$, since all higher order
derivatives of $F$ vanish. Using $-w(z)F'(z)=w_1(z)$, we arrive at (\ref{eq:wf}). Similarly, one obtains (\ref{eq:mff}).
\end{proof}

\begin{theorem}\label{thm:kernel1}
The kernel of $\cn(z)$ is spanned by the columns of
$\ck^{\mu-1}(z)$ and its dimension is equal to
$\nu(r-\mu)^+$. Hence the rank of $\cn(z)$ is equal to
$\nu\min\{\mu,r\}$. In particular the matrix $\mathcal{N}(z)$ has
full rank iff $\mu\geq r$.
\end{theorem}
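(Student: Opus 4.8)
The plan is to combine the factorization~(\ref{eq:mt}) of $\cn(z)$ with the kernel description for $\cn^0(z)$ obtained in Section~\ref{section:kerneln0} (Proposition~\ref{prop:kerkz}), lifted to the block-Hankel setting by the same mechanism that produced the matrix $K(z)=I_\nu\otimes F + S_\nu\otimes F'$ there. Since $I_\mu\otimes\widetilde U_q(z)$ and $I_\nu\otimes\widetilde W_r(z)$ are invertible, $\ker\cn(z)=(I_\nu\otimes\widetilde W_r(z))^{-1}\ker\hat\ca$, so everything reduces to analysing the constant block-Hankel matrix $\hat\ca$, or equivalently $\cn(0)$ via~(\ref{eq:nzn0}). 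First I would record that, by Proposition~\ref{prop:factorN} (or the rank statement following~(\ref{eq:m})), $\cn(z)$ has rank $\min\{\mu,r\}\times\min\{\nu,q\}$; under the standing assumption $\nu\le q$ this is $\nu\min\{\mu,r\}$, which already yields the rank formula and the ``full rank iff $\mu\ge r$'' claim once the dimension count is in place. So the real content is: (i) $\cn(z)\ck^{\mu-1}(z)\equiv 0$, and (ii) $\ck^{\mu-1}(z)$ has full column rank $\nu(r-\mu)^+$.

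For (i) I would argue exactly as in the proof of Proposition~\ref{prop:kerkz}, but iterated. The key algebraic facts are~(\ref{eq:mff}): $M^{(j)}(z)F(z)=jM_1^{(j-1)}(z)$ and the analogous $M^{(j)}(z)F'(z)=-M_1^{(j-1)}(z)\cdot$(sign), so that post-multiplying a row-block $\big(\cdots,\tfrac{1}{i!j!}M^{(i+j)}(z),\cdots\big)$ by the block column of $K_0(z)$ (which carries $F$ on the diagonal and $F'$ on the superdiagonal) produces, by the Leibniz/telescoping identity $M^{(j)}F + jM^{(j-1)}F' = (MF)^{(j)} = 0$ in the relevant entries, precisely a row-block of the same shape built from $M_1$ instead of $M$, with $r$ replaced by $r-1$. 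In other words $\cn(z)$ (built from $M$, parameter $r$) times $K_0(z)$ equals — up to the obvious normalization constants, which is where one must be slightly careful — a matrix of the same type built from $M_1$ with parameter $r-1$. Peeling off $K_1(z)$ then replaces $M_1$ by $M_2$ and $r-1$ by $r-2$, and after $\mu$ steps one reaches a matrix built from $M_\mu$ with parameter $r-\mu$; when $\mu\ge r$ this is the empty/zero object and $\ck^{\mu-1}(z)=0$ by convention, consistent with the claim. I would phrase this as an inductive lemma (``$\cn_{[r]}(z)K_0(z) = c\cdot\cn_{[r-1]}(z)$'' for a suitable invertible diagonal/constant factor $c$) and then compose.

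For (ii), the full-rank statement, I would again use that each $K_k(z)$ is block upper-bidiagonal with the full-column-rank matrix $F_k(z)$ on the block diagonal, hence each $K_k(z)$ has full column rank $\nu(r-1-k)$; the product of matrices that are each ``block upper triangular with full-rank diagonal blocks'' of compatible sizes again has full column rank, and the column count telescopes to $\nu(r-\mu)$ when $\mu\le r-1$ (and is vacuous otherwise). Together with the rank computation rank$\,\cn(z)=\nu\min\{\mu,r\}$, the dimension $\nu r - \nu\min\{\mu,r\} = \nu(r-\mu)^+$ matches the column count of $\ck^{\mu-1}(z)$, so its columns span the kernel; and $\cn(z)$ has full (row) rank $\mu q$ exactly when $\nu\min\{\mu,r\} = \mu q$, which under $\nu\le q$ forces $\min\{\mu,r\}=q$ — impossible unless one examines sizes. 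Here I expect the main obstacle: getting the full-rank direction of $\mathcal N(z)$ precisely right, since with $\nu\le q$ the matrix $\cn(z)$ is $\mu q\times\nu r$ and ``full rank'' should mean that the kernel is trivial, i.e. $\nu(r-\mu)^+=0$, i.e. $\mu\ge r$; I would state it as ``$\ker\cn(z)=\{0\}$ iff $\mu\ge r$'' rather than asserting invertibility, and double-check the bookkeeping of which of $\mu q,\nu r$ is larger. The other delicate point is tracking the scalar normalization constants through the iteration in step (i) — they are harmless (all nonzero) but must be carried honestly so that the final product is genuinely annihilated rather than merely ``annihilated up to constants''.
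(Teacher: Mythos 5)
Your overall strategy is the paper's own: use (\ref{eq:mff}) to iterate the argument of Proposition~\ref{prop:kerkz}, observe that $\ck^{\mu-1}(z)$ has full column rank because each factor $K_k(z)$ is block upper-bidiagonal with the injective blocks $F_k(z)$ on its diagonal, and then match the column count $\nu(r-\mu)^+$ against the codimension supplied by $\mathrm{rank}\,\cn(z)=\min\{\mu,r\}\min\{\nu,q\}=\nu\min\{\mu,r\}$ (recall $\nu\leq q$ here); your reading of ``full rank'' as ``trivial kernel'' is also the intended one. However, your formulation of the key inductive step has a genuine gap. You assert that $\cn(z)K_0(z)$ is (up to constants) ``a matrix of the same type built from $M_1$ with parameter $r-1$'' and that after $\mu$ steps one reaches ``a matrix built from $M_\mu$ with parameter $r-\mu$'', which vanishes only when $\mu\geq r$. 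Read literally, this establishes $\cn(z)\ck^{\mu-1}(z)=0$ only for $\mu\geq r$, i.e.\ exactly not in the main case $\mu<r$, where the kernel is nontrivial and the theorem has content; indeed, if the product really were a nonzero $M_\mu$-type matrix for $\mu<r$, the columns of $\ck^{\mu-1}(z)$ would not lie in $\ker\cn(z)$ at all.

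The correct bookkeeping (the paper's (\ref{eq:mbark}) and (\ref{eq:nkj})) is row-wise and involves a shift of the derivative level: writing $\cn^k(z)$ for the $k$-th block row of $\cn(z)$, one has $\cn^0(z)K_0(z)=0$ by Proposition~\ref{prop:kerkz}, while for $k\geq 1$ one gets $\cn^k(z)K_0(z)=\cn_1^{k-1}(z)$, so the level $k$ drops to $k-1$ as $M$ is replaced by $M_1$. Each multiplication by the next factor $K_j$ therefore both lowers the level of every surviving block row and annihilates the row that has reached level $0$ (via $M_jF_j=u\,w_jF_j=0$, the $M_j$-analogue of Proposition~\ref{prop:kerkz}); block row $k$ is killed after $k+1$ factors, so all $\mu$ block rows are annihilated by $K_0(z)\cdots K_{\mu-1}(z)$ for every $\mu\geq 1$, not only for $\mu\geq r$. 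With this repaired statement --- which also settles your worry about the normalization constants, since they come out as the nonzero factors $\frac{1}{(k-j)!\,j!}$ exactly as in (\ref{eq:nkj}) --- the remainder of your argument, namely the full column rank of the product and the dimension count via the already known rank, goes through and coincides with the paper's proof.
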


\begin{proof}
The arguments used in the proof of Proposition~\ref{prop:kerkz}
can also be applied to this more general case. Let for $k\geq 0$
\[
\cn^k(z)=\frac{1}{k!}\big(\frac{1}{0!}M^{(k)},\ldots,\frac{1}{(\nu-1)!}M^{(k+\nu-1)}(z)\big).
\]
Proposition~\ref{prop:kerkz} yields $\cn^0(z)K_0(z)=0$, and therefore $\cn^0(z)\ck^{\mu-1}(z)=0$. Consider now $k\geq 1$. As before, $K^j(z)$ denotes the $j$-th block-column of $K_0(z)=K(z)$.
Then, from~(\ref{eq:mff}) it follows that 
\[
\cn^k(z)K^0(z)=\frac{1}{k!}M^{(k)}(z)F(z)=\frac{1}{(k-1)!}
M_1^{(k-1)}(z). 
\]
For $j\geq 1$ we get
\begin{align*}
\cn^k(z)K^j(z) &
=\frac{1}{k!}\big(\frac{1}{(j-1)!}M^{(k+j-1)}(z)F'(z)+\frac{1}{j!}M^{(k+j)}(z)F(z)\big)
\\ & = \frac{1}{(k-1)!j!}M_1^{(k+j-1)}(z),
\end{align*}
where we used~(\ref{eq:mff}) again. It follows that for $k\geq 1$
\begin{equation}\label{eq:mbark}
\cn^k(z)K_0(z)=\cn_1^{k-1}(z),
\end{equation}
where
\[
\cn_1^{k-1}(z)=\frac{1}{(k-1)!}\big(\frac{1}{0!}M_1^{(k-1)},\ldots,\frac{1}{(\nu-1)!}M_1^{(k+\nu-2)}(z)\big),
\]
which is a matrix of size $ q \times \nu(r-1)$. Invoking Proposition~\ref{prop:kerkz} again, we obtain for $k=1$
\[
\cn^1(z)K_0(z)K_1(z)=\cn_1^{0}(z)K_1(z)=0,
\]
and hence $\cn^1(z)\ck_{\mu-1}(z)=0$. Assume that
$\mu<r$. To have $\cn(z)\ck^{\mu-1}(z)=0$, we need
$\cn^k(z)\ck^{\mu-1}(z)=0$ for $k=0,\ldots,\mu-1$. This now follows by
iteration of (\ref{eq:mbark}). In fact, by induction, one can show
\begin{equation}\label{eq:nkj}
\cn^k(z)K_0(z)\cdots K_{j-1}(z)=\cn^{k-j}_j(z)\mbox{ for }j\leq k,
\end{equation}
where
\[
\cn^{k-j}_j(z)= \frac{1}{(k-j)!}\big(\frac{1}{0!}M_j^{(k-j)},\ldots,\frac{1}{(\nu-1)!}M_j^{(k+\nu-j-1)}(z)\big).
\]
For $j\geq k+1$ one has $\cn^k(z)K_0(z)\cdots K_{j-1}(z)=0$.

Since each of the matrices $K_k(z)$ for
$k< r-1$ has full rank, which is equal to $\nu(r-k-1)$, we get
that $\ck^{\mu-1}(z)$ has rank equal to $\nu(r-\mu)$. All
assertions for $\mu< r$ now follow. On the other hand, for
$\mu\geq r$, the matrix $\cn(z)$ has rank equal to $\nu r$, and
therefore has a zero kernel.
\end{proof}

\begin{remark}
The matrix $K^{\mu-1}(z)$ for $\mu\leq r-1$, which is of size $\nu
r\times \nu(r-\mu)$ turns out to be upper block-triangular.
Consider for this case the product $\cf_{\mu-1}(z)=F_0(z)\cdots
F_{\mu-1}(z)$. Then one has for $j\geq i$ the $ij$-block
\[
\ck^{\mu-1}(z)_{ij}=\frac{1}{(j-i)!}\cf_{\mu-1}^{(j-i)}(z),
\]
which can easily be shown by induction.
\end{remark}

%\subsection{Intermezzo, results that will disappear}

% We need the
%following result.
%Multiplying $\cn^1(z)\bar{K}_0(z)$ by the $q$-th block column of
%$\bar{K}_1(z)$ yields
%\[
%-\frac{M_1^{(q-1})(z)G_1'(z)}{(q-1)!}+\frac{M^{(q)}(z)G(z)}{q!}.
%\]

%Interesting property: $M_1^{(j)}(z)G_1(z)=M^{(j)}(z)G(z)$.

%We will work with the matrices $K_k(z)$ and $\ck^{\mu-1}(z)$ as
%defined just before that theorem, but now we take  $K(z)=K^0(z)$
%with blocks $K_{ij}(z)$ for $i,j=0,\ldots q-1$. Then $K(z)$ has size
%$qr\times q(r-1)$. As a consequence $\ck^{\mu-1}(z)$ is now of size
%$qr\times q(r-\mu)$ for $\mu\leq r-1$, whereas we take it as the
%empty matrix for $\mu\geq r$. Define $\bar{\ck}^{\mu-1}(z)$ for
%$\mu\leq r-1$ by
%\begin{equation}\label{eq:kbarmu}
%\bar{\ck}^{\mu-1}= \pmatnocross\begin{pmat}({..|....})
% & & & & & & & \cr
% & \ck^{\mu-1} & & & & & & \cr
%& &  & -G & & & & \cr\- & & & I & -G & & & \cr & & & & I & -G & &
%\cr & & & & & \ddots & \ddots & \cr & & & & & & \ddots & -G \cr & &
%& & & & & I \cr
%\end{pmat}.
%\end{equation}
%where the lower right block is of size $(\nu-q)r\times(\nu-q)r$.
%Hence $\bar{\ck}(z)$ has size $\nu r\times (\nu r-\mu q)$. For
%$\mu\geq r$, we get
%\[
%\bar{\ck}^{\mu-1}=
%\begin{pmatrix}
%0 & & & & \cr -G & & & & \cr I & -G & & & \cr & I & -G & & \cr & &
%\ddots & \ddots & \cr & & & \ddots & -G \cr & & & & I \cr
%\end{pmatrix},
%\]
%where the indicated zero block is of size $(q-1)r\times r$. As a
%result the matrix $\bar{\ck}(z)$ has in the this case size $\nu
%r\times (\nu-q)r$.

\subsection{The case $q+1\leq \nu <q+r$}\label{section:2ndcase}

Next we extend the result of Proposition~\ref{prop:kerkz1} to
obtain the kernel of the matrix $\cn(z)$ for the present case. The
approach that we follow is the same as the one leading to
Theorem~\ref{thm:kernel1}.

To obtain our results,  we need to introduce additional notation.
Let $G_{kj}(z)$  denote the upper left block of $G(z)$ having size $(r-k)\times (r-j)$ for $0\leq k,j\leq r-1$. So
\[
G_{kj}(z)=\begin{pmatrix}I_{r-k} & 0_{(r-k)\times k}\end{pmatrix}G(z)\begin{pmatrix}I_{r-j} \\ 0_{j\times (r-j)}\end{pmatrix}.
\]
We also need the matrices $\cg_0(z)=I_r$, and $\cg_{i}(z)=G_{i,i-1}(z)\cdots G_{10}(z)\in\rr^{(r-i)\times r}$, for $0<i\leq r-1$.

\begin{lemma}\label{lemma:gjik}
It holds that $G_{jj}(z)G_{j,j-1}(z)=G_{j,j-1}(z)G_{j-1,j-1}(z)$ for $j\geq 1$ and for $j>i>k\geq 0$ one has $G_{ji}(z)G_{ik}(z)=G_{j,i-1}(z)G_{i-1,k}(z)=G_{jk}(z)G_{kk}(z)$.
\end{lemma}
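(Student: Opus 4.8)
The plan is to reduce the truncated blocks to expressions built from the full matrix $G(z)$ and the canonical coordinate projections, and then to exploit the strictly upper‑triangular (equivalently, shift) structure of $G(z)$ recorded in Lemma~\ref{lemma:g}. Write $P_k=\begin{pmatrix}I_{r-k}&0\end{pmatrix}$ and $Q_j=P_j^\top$, so that $G_{kj}(z)=P_kG(z)Q_j$ and $Q_jP_j=\Pi_j$, the diagonal projection onto the first $r-j$ coordinates. Since a factor $Q_iP_i$ appears whenever two blocks are multiplied, every product $G_{ab}(z)G_{bc}(z)$ equals $P_aG(z)\Pi_bG(z)Q_c$, and therefore depends on the middle index $b$ only through $\Pi_b$. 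Because $\Pi_{b-1}-\Pi_b=e_{r-b}e_{r-b}^\top$, replacing the middle index by its neighbour changes such a product by a single rank‑one term $\bigl(P_aG(z)e_{r-b}\bigr)\bigl(e_{r-b}^\top G(z)Q_c\bigr)$, so the whole lemma comes down to showing that the relevant such terms vanish.

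I would start with the model identity $G_{jj}G_{j,j-1}=G_{j,j-1}G_{j-1,j-1}$. Both sides equal $P_jG(z)\Pi_\bullet G(z)Q_{j-1}$, with $\bullet=j$ on the left and $\bullet=j-1$ on the right, so their difference is $\bigl(P_jG(z)e_{r-j}\bigr)\bigl(e_{r-j}^\top G(z)Q_{j-1}\bigr)$. Here $e_{r-j}^\top G(z)$ is row $r-j$ of $G(z)$, which by $G_{ml}(z)=z^{l-m-1}$ for $l>m$ (and $0$ otherwise) is supported on columns $r-j+1,\dots,r-1$, whereas $Q_{j-1}$ keeps only columns $0,\dots,r-j$; hence $e_{r-j}^\top G(z)Q_{j-1}=0$ and the two sides coincide. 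It is convenient to package the underlying support facts as the "sliding'' identities $G(z)Q_j=Q_{j+1}G_{j+1,j}(z)$ (the last $j+1$ rows of the first $r-j$ columns of $G(z)$ vanish) and $e_{r-j}^\top G(z)Q_{j-1}=0$, both of which also follow at once from $G=S(I-zS)^{-1}$ together with the elementary relation $S\Pi_j=\Pi_{j+1}S$.

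For the chain $G_{ji}G_{ik}=G_{j,i-1}G_{i-1,k}=G_{jk}G_{kk}$ with $j>i>k\ge0$ the strategy is the same: write each of the three products in the common form $P_jG(z)\Pi_\bullet G(z)Q_k$ with $\bullet\in\{i,i-1,k\}$, and verify that moving from one value of $\bullet$ to the next only removes (or adds) rank‑one contributions $\bigl(P_jG(z)e_{r-\ell}\bigr)\bigl(e_{r-\ell}^\top G(z)Q_k\bigr)$ that are zero because, in the index range forced by $j>i>k$, either the column $G(z)e_{r-\ell}$ has no surviving entry among the first $r-j$ rows retained by $P_j$, or the row $e_{r-\ell}^\top G(z)$ has no surviving entry among the first $r-k$ columns retained by $Q_k$. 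Equivalently, one can apply the sliding identity $G(z)Q_k=Q_{k+1}G_{k+1,k}(z)$ repeatedly, pushing the right‑hand truncation inward and collapsing the middle projection against the resulting factors of $G(z)$, until all three expressions are brought to the same normal form; the first part of the lemma is exactly the $k+1=i$ base case of this reduction. The routine‑but‑delicate part — and essentially the only obstacle — is the index bookkeeping: one must check that all the inequalities relating $r-j$, $r-i$, $r-k$ and the exponents $\ell-m-1$ come out with the right sign so that every dropped entry really does fall outside the support. The algebra in each individual case is just a one‑line support argument, and the degenerate ranges (such as $k=i-1$, or values of the indices making a block empty) are absorbed by the obvious conventions.
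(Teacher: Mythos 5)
Your reduction $G_{ab}(z)G_{bc}(z)=P_aG(z)\Pi_bG(z)Q_c$ and your proof of the first identity are correct, and in substance they coincide with the paper's argument: the paper writes $G_{j-1,j-1}$ in block form with zero last row, which is exactly your observation that $e_{r-j}^\top G(z)Q_{j-1}=0$. The gap is in the chain $G_{ji}G_{ik}=G_{j,i-1}G_{i-1,k}=G_{jk}G_{kk}$: the rank-one corrections do \emph{not} all vanish, and the index bookkeeping you defer as routine cannot come out right. Moving the middle index from $b$ to $b-1$ costs $\bigl(P_jG(z)e_{r-b}\bigr)\bigl(e_{r-b}^\top G(z)Q_k\bigr)$; the row factor $e_{r-b}^\top G(z)Q_k$ vanishes only when $b\leq k+1$ (row $r-b$ of $G(z)$ is supported on columns $\geq r-b+1$, while $Q_k$ retains columns up to $r-k-1$), and the column factor $P_jG(z)e_{r-b}$ is never identically zero, since its row-$0$ entry is $z^{r-b-1}$ and $P_j$ always retains row $0$. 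Hence for $i\geq k+2$ the step from $\Pi_i$ to $\Pi_{i-1}$ produces a genuinely nonzero difference, and indeed the asserted identity itself fails in that range: with $r=4$ and $(j,i,k)=(3,2,0)$ one computes $G_{32}(z)G_{20}(z)=(0,0,1,z)$, whereas $G_{31}(z)G_{10}(z)=G_{30}(z)G_{00}(z)=(0,0,1,2z)$.

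What your mechanism actually delivers, besides the first display, is precisely the adjacent case $G_{j,k+1}(z)G_{k+1,k}(z)=G_{jk}(z)G_{kk}(z)$ for $j>k$, where the single correction term has $b=k+1$ and its row factor vanishes. You are in good company: the paper's own proof tacitly needs $k=i-1$ as well, since it represents $G_{ik}$ as a corner of $G_{i-1,i-1}$, which requires $r-k\leq r-i+1$; and in the subsequent application (Lemma~\ref{lemma:ggg}) only such adjacent products occur, with additional right factors of $G(z)$ that kill the discrepancy in any case. But as a proof of the lemma as stated, your argument fails at exactly the step you flagged as bookkeeping, and no bookkeeping can close it.
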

\begin{proof}
The first assertion follows from the decomposition
\[
G_{j-1,j-1}  =\begin{pmatrix}G_{j,j-1} \\0 \end{pmatrix}=\begin{pmatrix}G_{jj} & g \\ 0 & 0\end{pmatrix},
\]
where $g$ is the last column of $G_{j,j-1}$. For the proof of the second assertion we need the following property of the shift matrix $S\in\rr^k$ ($k$ according to the context): $\widetilde{I}S=S$, where
\[
\widetilde{I}=\begin{pmatrix}I_{k-1} & 0 \\0 & 0\end{pmatrix}.
\]
Since any $G_{ii}(z)$ is of the form $S(I-zS)^{-1}$ (with $S$ of size $(r-i)\times (r-i)$, see Lemma~\ref{lemma:g}), we have $I_0G_{ii}(z)=G_{ii}(z)$.
Then we compute
\begin{align*}
G_{ji}G_{ik} & =\begin{pmatrix}I & 0 \end{pmatrix}G_{i-1,i-1}\begin{pmatrix}I \\ 0\end{pmatrix}\begin{pmatrix}I & 0 \end{pmatrix}G_{i-1,i-1}\begin{pmatrix}I \\ 0\end{pmatrix} \\
& = \begin{pmatrix}I & 0 \end{pmatrix}G_{i-1,i-1}\begin{pmatrix}I & 0 \\0 & 0 \end{pmatrix}G_{i-1,i-1}\begin{pmatrix}I \\ 0\end{pmatrix} \\
& = \begin{pmatrix}I & 0 \end{pmatrix}G_{i-1,i-1}\widetilde{I}G_{i-1,i-1}\begin{pmatrix}I \\ 0\end{pmatrix} \\
& = G_{j,i-1}G_{i-1,k}.
\end{align*}
\end{proof}
\begin{lemma}\label{lemma:ggg} For $i\geq 1$,
%under the convention $\cg_{0}=1$,
it holds that
\begin{align}
\frac{1}{i!}G_{i,0}^{(i)}(z)& =G_{i0}(z)G(z)^i, \nonumber\\
\cg_{i}(z) & = G_{i0}(z)G(z)^{i-1} \label{eq:ggg2}\\
G_{ii}(z)\cg_i(z)& =\cg_i(z)G(z).\label{eq:ggg3}
\end{align}
\end{lemma}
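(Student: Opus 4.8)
The plan is to prove the three identities of Lemma~\ref{lemma:ggg} in the stated order, leveraging the commutation relations of Lemma~\ref{lemma:gjik} and the basic properties of $G(z)$ from Lemma~\ref{lemma:g}, in particular $\frac{1}{j!}G^{(j)}(z)=G(z)^{j+1}$ (Equation~(\ref{eq:g3})) and $w^{(j)}(z)/j!=w(z)G(z)^j$ (Equation~(\ref{eq:w1})). I would first establish $\frac{1}{i!}G_{i,0}^{(i)}(z)=G_{i0}(z)G(z)^i$. Since $G_{i0}(z)=\begin{pmatrix}I_{r-i}&0\end{pmatrix}G(z)$, differentiating $i$ times and dividing by $i!$ gives $\begin{pmatrix}I_{r-i}&0\end{pmatrix}\frac{1}{i!}G^{(i+1)}(z)=\begin{pmatrix}I_{r-i}&0\end{pmatrix}(i+1)G(z)^{i+1}$; but a cleaner route is to note $G_{i0}(z)=\begin{pmatrix}I_{r-i}&0\end{pmatrix}G(z)$ has entries that are monomials, or simply to observe that the $i$-th derivative only needs the identity $\frac{1}{i!}G^{(i)}(z)=G(z)^{i+1}$ restricted to the appropriate block, combined with the fact that $\begin{pmatrix}I_{r-i}&0\end{pmatrix}G(z)^{k}=G_{i0}(z)G(z)^{k-1}$ for $k\geq 1$, which follows from Lemma~\ref{lemma:gjik} (the telescoping $G_{ji}G_{ik}=G_{jk}G_{kk}$ and $I_0G_{ii}=G_{ii}$ type arguments) by induction on the number of factors. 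This is really the same bookkeeping that proves~(\ref{eq:ggg2}).

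For~(\ref{eq:ggg2}), namely $\cg_i(z)=G_{i0}(z)G(z)^{i-1}$, I would argue by induction on $i$. The base case $i=1$ is $\cg_1(z)=G_{10}(z)=G_{10}(z)G(z)^0$, which is the definition. For the inductive step, $\cg_{i}(z)=G_{i,i-1}(z)\cg_{i-1}(z)=G_{i,i-1}(z)G_{i-1,0}(z)G(z)^{i-2}$ by the induction hypothesis, and then I apply the second assertion of Lemma~\ref{lemma:gjik} with $(j,i,k)=(i,i-1,0)$ to get $G_{i,i-1}(z)G_{i-1,0}(z)=G_{i0}(z)G_{00}(z)=G_{i0}(z)G(z)$, so $\cg_i(z)=G_{i0}(z)G(z)\cdot G(z)^{i-2}=G_{i0}(z)G(z)^{i-1}$, as required. (Here one uses $G_{00}(z)=G(z)$.)

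Finally, for~(\ref{eq:ggg3}), $G_{ii}(z)\cg_i(z)=\cg_i(z)G(z)$, I substitute~(\ref{eq:ggg2}) on both sides: the left side becomes $G_{ii}(z)G_{i0}(z)G(z)^{i-1}$ and the right side $G_{i0}(z)G(z)^{i-1}G(z)=G_{i0}(z)G(z)^{i}$. So it suffices to show $G_{ii}(z)G_{i0}(z)=G_{i0}(z)G(z)$. When $i\geq 1$ this is again the second assertion of Lemma~\ref{lemma:gjik}, now with $(j,i,k)$ replaced by $(i,i,0)$ read as $G_{ii}G_{i0}=G_{i0}G_{00}$ — more precisely one uses the first assertion $G_{jj}G_{j,j-1}=G_{j,j-1}G_{j-1,j-1}$ iterated, or directly the identity $G_{ii}(z)G_{i0}(z)=G_{i0}(z)G_{00}(z)=G_{i0}(z)G(z)$ that comes from the same $\widetilde{I}S=S$ trick used in the proof of Lemma~\ref{lemma:gjik}. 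For $i=0$ the statement reads $G(z)I_r=I_rG(z)$, trivially true. I expect the main obstacle to be purely notational: keeping track of which block sizes the various identity/zero padding matrices have in each product, and verifying that the instances of Lemma~\ref{lemma:gjik} invoked are legitimate (i.e.\ the index inequalities $j>i>k$ or $j\geq 1$ hold). No genuinely new computation is needed beyond what Lemma~\ref{lemma:gjik} already provides; the proof is a short induction plus careful substitution.
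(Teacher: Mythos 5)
Your arguments for the first identity and for (\ref{eq:ggg2}) are correct and essentially the paper's own: the first identity is immediate from $G_{i0}(z)=\begin{pmatrix}I_{r-i}&0\end{pmatrix}G(z)$ together with (\ref{eq:g3}) (your opening sentence, with $G^{(i+1)}$ and a factor $i+1$, is a slip, but the ``cleaner route'' you then give is the right computation, and the fact $\begin{pmatrix}I_{r-i}&0\end{pmatrix}G(z)^k=G_{i0}(z)G(z)^{k-1}$ is just the definition of $G_{i0}$, no appeal to Lemma~\ref{lemma:gjik} needed); and your induction for (\ref{eq:ggg2}), using the second assertion of Lemma~\ref{lemma:gjik} with indices $(i,i-1,0)$, is legitimate for $i\geq 2$ with base case $i=1$.

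Your proof of (\ref{eq:ggg3}) has a genuine gap: the statement you reduce it to, $G_{ii}(z)G_{i0}(z)=G_{i0}(z)G(z)$, is false for $i\geq 2$ (it does hold for $i=1$, which makes small examples misleading). Take $r=3$, $i=2$: then $G_{22}=\begin{pmatrix}0\end{pmatrix}$ and $G_{20}=\begin{pmatrix}0&1&z\end{pmatrix}$, so $G_{22}G_{20}=\begin{pmatrix}0&0&0\end{pmatrix}$, whereas $G_{20}G(z)=\begin{pmatrix}0&0&1\end{pmatrix}$. None of your suggested routes can establish it: the second assertion of Lemma~\ref{lemma:gjik} requires the strict inequalities $j>i>k$, and the $\widetilde{I}S=S$ trick breaks down in the degenerate case $j=i$ because the projector inserted between the two copies of $G$ now kills the last $i$ coordinates instead of only the last one, and only the last row of $G$ is zero. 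What is true is only that the discrepancy is annihilated by the remaining factor, $\bigl(G_{ii}G_{i0}-G_{i0}G\bigr)G(z)^{i-1}=0$, since the last $i$ rows of $G(z)^{i}$ vanish; so (\ref{eq:ggg3}) is correct, but it is not a two-factor identity. The clean repair is the other route you hint at, applied \emph{before} substituting (\ref{eq:ggg2}): use the definition $\cg_i=G_{i,i-1}\cdots G_{10}$ and telescope the first assertion of Lemma~\ref{lemma:gjik} through the product,
\[
G_{ii}\cg_i=G_{i,i-1}G_{i-1,i-1}G_{i-1,i-2}\cdots G_{10}=\cdots=G_{i,i-1}\cdots G_{10}\,G_{00}=\cg_i\,G(z),
\]
equivalently an induction on $i$ using $G_{ii}G_{i,i-1}=G_{i,i-1}G_{i-1,i-1}$ and the hypothesis $G_{i-1,i-1}\cg_{i-1}=\cg_{i-1}G$. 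This needs only $j\geq 1$ in the first assertion and yields (\ref{eq:ggg3}) directly, without the false intermediate claim.
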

\begin{proof}
Using the definition of $G_{i0}$, the equality $\frac{1}{i!}G_{i,0}^{(i)}=G_{i0}G^i$ immediately follows from (\ref{eq:g3}).
The second equality (\ref{eq:ggg2}) is obviously true for $i=0$. We use
induction. Let $i\geq 1$ and assume that $\cg_{i}=G_{i0}G^{i-1}$. Then, using Lemma~\ref{lemma:gjik}, $\cg_{i+1}G=G_{i+1,i}\cg_iG=G_{i+1,i}G_{i0}G^i=G_{i+1,0}G^{i+1}$. To prove (\ref{eq:ggg3}), we use (\ref{eq:ggg2}) and Lemma~\ref{lemma:gjik} to write
$
G_{ii}\cg_i=G_{ii}G_{i0}G^i=G_{i0}GG^i=\cg_iG.
$
\end{proof}
\begin{lemma}\label{lemma:mqpm}
Let $w_j(z)$ and $M_j(z)$ be as in  Section~\ref{section:nlq}. It holds that
\begin{align}
w_j^{(k)}(z)\cg_j(z) & = w^{(k)}G^j(z) \label{eq:wgkj1} \\
\frac{1}{k!}w_j^{(k)}(z)\cg_j(z) & =\frac{1}{(k+j)!}w^{(k+j)}(z) \label{eq:wgkj2} \\
M_j^{(k)}(z)\cg_j(z) & = M^{(k)}(z)G(z)^j.\label{eq:mgkj}
\end{align}
Moreover, for $k\geq 0$ it holds that
\begin{equation}\label{eq:mqq-1}
\frac{M_j^{(q-1+k)}(z)}{(q-1+k)!}\cg_j(z)=\frac{1}{(q-1)!}M^{(q-1)}(z)G^{k+j+1}.
\end{equation}
%\[
%\frac{M_1^{(q-1)}(z)G_{10}'(z)}{(q-1)!}=\frac{M^{(q)}(z)G(z)}{q!}.
%\]
\end{lemma}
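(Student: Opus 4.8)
The plan is to prove the four identities in the order written, each being a short consequence of the previous ones together with the facts already established about $w$ and $G$. Observe first that (\ref{eq:wgkj1}) and (\ref{eq:wgkj2}) are equivalent: multiplying (\ref{eq:wgkj2}) by $k!$ and using $w^{(k)}(z)=k!\,w(z)G(z)^k$ and $w^{(k+j)}(z)=(k+j)!\,w(z)G(z)^{k+j}$ from (\ref{eq:w1}) turns its right-hand side into $w^{(k)}(z)G(z)^j$. So the only genuine work is one of these, say (\ref{eq:wgkj2}); after that, (\ref{eq:mgkj}) follows from the Leibniz rule applied to $M_j=uw_j$, and (\ref{eq:mqq-1}) follows by combining (\ref{eq:mgkj}) with (\ref{eq:mgpq}) of Lemma~\ref{lemma:mug}.

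For (\ref{eq:wgkj2}) I would induct on $j$, the case $j=0$ being immediate since $\cg_0(z)=I_r$. The engine of the induction is the reduction identity
\[
w_j^{(k)}(z)\,G_{j,j-1}(z)=\tfrac{1}{k+1}\,w_{j-1}^{(k+1)}(z),\qquad k\geq 0 .
\]
Its $k=0$ case, $w_j(z)G_{j,j-1}(z)=w_{j-1}'(z)$, I would get by noting that $G_{j-1,j-1}(z)$, being the upper-left $(r-j+1)\times(r-j+1)$ block of the strictly upper triangular matrix $G(z)$, has vanishing last row and has $G_{j,j-1}(z)$ as its first $r-j$ rows; since $w_{j-1}=(w_j,z^{r-j})$ this gives $w_{j-1}(z)G_{j-1,j-1}(z)=w_j(z)G_{j,j-1}(z)$, whose left-hand side equals $w_{j-1}'(z)$ by (\ref{eq:w1}) applied in dimension $r-j+1$ (legitimate because $G_{j-1,j-1}(z)$ is of the form $S(I-zS)^{-1}$, as noted in the proof of Lemma~\ref{lemma:gjik}). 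For general $k$ I would push $G_{j,j-1}(z)$ past powers of $G_{jj}(z)$: from $\tfrac{1}{k!}w_j^{(k)}(z)=w_j(z)G_{jj}(z)^k$ (that is (\ref{eq:w1}) in dimension $r-j$) and $G_{jj}(z)^kG_{j,j-1}(z)=G_{j,j-1}(z)G_{j-1,j-1}(z)^k$ (iterating the first assertion of Lemma~\ref{lemma:gjik}) one gets $\tfrac{1}{k!}w_j^{(k)}(z)G_{j,j-1}(z)=w_{j-1}'(z)G_{j-1,j-1}(z)^k$, which (\ref{eq:w2}) in dimension $r-j+1$ rewrites as $\tfrac{1}{(k+1)!}w_{j-1}^{(k+1)}(z)$. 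Granting the reduction identity, the inductive step is immediate: since $\cg_j(z)=G_{j,j-1}(z)\cg_{j-1}(z)$, the reduction identity gives
\[
\tfrac{1}{k!}w_j^{(k)}(z)\cg_j(z)=\tfrac{1}{(k+1)!}w_{j-1}^{(k+1)}(z)\cg_{j-1}(z),
\]
and the inductive hypothesis (with $j-1$ and $k+1$ in place of $j$ and $k$) identifies the right-hand side with $\tfrac{1}{(k+j)!}w^{(k+j)}(z)$.

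For (\ref{eq:mgkj}) I would expand $M_j^{(k)}(z)=(uw_j)^{(k)}(z)=\sum_{i=0}^k\binom{k}{i}u^{(k-i)}(z)w_j^{(i)}(z)$, insert (\ref{eq:wgkj1}) in each summand to turn $w_j^{(i)}(z)\cg_j(z)$ into $w^{(i)}(z)G(z)^j$, factor $G(z)^j$ out to the right, and recognize $\sum_{i=0}^k\binom{k}{i}u^{(k-i)}(z)w^{(i)}(z)=M^{(k)}(z)=(uw)^{(k)}(z)$. Finally, for (\ref{eq:mqq-1}) I would replace $k$ by $q-1+k$ in (\ref{eq:mgkj}), divide by $(q-1+k)!$, and use (\ref{eq:mgpq}) in the form $\frac{M^{(q-1+k)}(z)}{(q-1+k)!}=\frac{M^{(q-1)}(z)}{(q-1)!}G(z)^k$; this collapses the left-hand side to $\frac{M^{(q-1)}(z)}{(q-1)!}G(z)^{k+j}$.

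The one nontrivial point is the reduction identity, specifically the interplay of the two ``size reductions'' involved — the truncation sending $(w,G)$ to $(w_j,G_{jj})$, and the block-shift relation $G_{jj}G_{j,j-1}=G_{j,j-1}G_{j-1,j-1}$ linking consecutive sizes. What makes it painless is the remark that $(w_j,G_{jj})$ is a verbatim smaller copy of $(w,G)$, so that Lemma~\ref{lemma:g} is available in every dimension $r-j$; once this is exploited, the remainder is just bookkeeping with factorials and binomial coefficients.
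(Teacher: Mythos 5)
Your argument is correct and follows essentially the same route as the paper: the $w$-identity is proved by induction on $j$, exploiting that $(w_j,G_{jj})$ is a verbatim lower-dimensional copy of $(w,G)$, then (\ref{eq:mgkj}) comes from the Leibniz rule and the last identity from combining it with (\ref{eq:mgpq}). The only organisational difference is in the inductive step: the paper keeps $k$ fixed and passes from $j$ to $j+1$ via the block identity $w_{j+1}^{(k)}(z)G_{j+1,j}(z)=w_j^{(k)}(z)G_{jj}(z)$ together with (\ref{eq:ggg3}), whereas you trade one level of truncation for one derivative through the reduction identity $w_j^{(k)}(z)G_{j,j-1}(z)=\frac{1}{k+1}w_{j-1}^{(k+1)}(z)$, re-deriving the needed commutation by iterating Lemma~\ref{lemma:gjik} and using Lemma~\ref{lemma:g} in the smaller dimensions. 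Both steps rest on the same structural facts and are of the same depth, so this is a variant of the paper's proof rather than a genuinely different one.

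One point you should have flagged rather than passed over silently: your computation of the last identity ends with $\frac{1}{(q-1)!}M^{(q-1)}(z)G(z)^{k+j}$, while the display (\ref{eq:mqq-1}) asserts the exponent $k+j+1$. You are right and the displayed exponent is an off-by-one slip in the paper (its own proof carries out exactly your computation with (\ref{eq:mgkj}) and (\ref{eq:mgpq}) and then writes $k+j+1$ in the final line, which is not what (\ref{eq:mgpq}) yields). A concrete check: for $q=1$, $r=2$, $j=1$, $k=0$ one has $M_1(z)\cg_1(z)=(0,1)=M(z)G(z)$, whereas $M(z)G(z)^{2}=0$. The way the identity is actually used later, in the proof of Theorem~\ref{thm:kernel2} at (\ref{eq:gkk}), has an extra factor $G(z)$ on the left-hand side, which is consistent with your exponent $k+j$. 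So your proof establishes the corrected statement; state the discrepancy explicitly instead of deviating from the display without comment.
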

\begin{proof}
We need the following observation. For a row vector $x$ of appropriate length and a scalar $y$, one has
\begin{equation}\label{eq:gxy}
(x,y)G_{jj}=x\,G_{j,j-1},
\end{equation}
because \[
G_{jj}=\begin{pmatrix}
G_{j,j-1} \\ 0
\end{pmatrix}.
\]
We now show (\ref{eq:wgkj1}).  It is obviously true for $j=0$. Assume it holds for some $j\geq 0$. We get, using (\ref{eq:ggg3}) and (\ref{eq:gxy})
\begin{align*}
w^{(k)}_{j+1}\cg_{j+1} & = w^{(k)}_{j+1}G_{j+1,j}\cg_{j} \\
& = w^{(k)}_{j}G_{jj}\cg_{j} \\
& = w^{(k)}_{j}\cg_{j} G\\
& = w^{(k)}G^{j+1}.
\end{align*}
Equation~(\ref{eq:wgkj2}) follows by combining (\ref{eq:wgkj1}) and (\ref{eq:w2}), whereas (\ref{eq:mgkj}) is an immediate consequence of (\ref{eq:wgkj1}).
Next we compute, using (\ref{eq:mgkj}) and (\ref{eq:mgpq}),
\begin{align*}
\frac{M_j^{(q-1+k)}(z)}{(q-1+k)!}\cg_j(z) & = \frac{M^{(q-1+k)}(z)}{(q-1+k)!}G(z)^j \\
& = \frac{M^{(q-1)}(z)}{(q-1)!}G(z)^{k+j+1},
\end{align*}
which yields (\ref{eq:mqq-1}).
%and use Lemma~\ref{lemma:ggg}.
\end{proof}
%Let's define the matrices $\bar{K}_j\in\rr^{(r\nu-qk)\times(r\nu-q(k+1))}$ as follows.
%\begin{equation}\label{eq:kbarjprimenew}
%\bar{K}_j=
%\pmatnocross\begin{pmat}({....|....})
%F_j & F_j' & & & & & & & & \cr
%& F_j & F_j' & & & & & & & \cr
%& & \ddots & \ddots & & & & & & \cr
%& & & \ddots & F_j' & & & & & \cr
%& & & & F_j & -\frac{1}{j!}G_{j0}^{(j)} & & & & \cr\-
%& & & & & I & -G & & & \cr
%& & & & & & I & -G & & \cr
%& & & & & & & \ddots & \ddots & \cr
%& & & & & & & & \ddots & -G \cr
%& & & & & & & & & I \cr
%\end{pmat}.
%\end{equation}
Let for $j\leq r-2$ the matrix $\bar{K}_j(z)$ be given by

\begin{equation}\label{eq:kbarj}
\bar{K}_j=
\pmatnocross\begin{pmat}({....|....})
F_j & F_j' & & & & & & & & \cr
& F_j & F_j' & & & & & & & \cr
& & \ddots & \ddots & & & & & & \cr
& & & \ddots & F_j' & & & & & \cr
& & & & F_j & -\frac{1}{j!}G_{j0}^{(j)} & & & & \cr\-
& & & & & I & -G & & & \cr
& & & & & & I & -G & & \cr
& & & & & & & \ddots & \ddots & \cr
& & & & & & & & \ddots & -G \cr
& & & & & & & & & I \cr
\end{pmat}.
\end{equation}
Here $\bar{K}_j$ has $q$ diagonal entries $F_j$ and $\nu-q$ diagonal entries $I=I_r$. Hence $\bar{K}^j$ has dimensions $(\nu r-qj)\times(\nu r - q(j+1))$.
A compact expression of $\bar{K}^j$ is as follows. Let $\ell_q$ be the last standard basis vector of $\rr^q$, $\mathrm{f}_{\nu-q}$ the first basis vector of $\rr^{\nu-q}$, and $S_q$ the shift matrix of size $q\times q$.
Then, similar to (\ref{eq:bark}),
\[
\bar{K}_j=\begin{pmatrix}
I_q\otimes F_j+S_q\otimes F_j' & -\ell_q \mathrm{f}_{\nu-q}^\top\otimes \frac{1}{j!} G_{j0}^{(j)} \\
0 & I_{\nu-q}\otimes I_r - S_{\nu-q}\otimes G
\end{pmatrix}.
\]
Note that the matrices $F_j$ are empty for $j\geq r-1$,
$\frac{1}{(r-1)!}G_{r-1,0}^{(r-1)}(z)=(0,\ldots,0)$ and that
$G_{j0}^{(j)}$ is empty for $j\geq r$. Hence we define
\begin{equation}\label{eq:kbarr-1}
\bar{K}_{r-1}=
\pmatnocross\begin{pmat}({|....})
0_{q\times r} & & & & \cr\-
%-\frac{1}{(r-1)!}G_{r-1,0}^{(r-1)} & & & & \cr\-
I & -G & & & \cr
& I & -G & & \cr
& & \ddots & \ddots & \cr
& & & \ddots & -G \cr
& & & & I \cr
\end{pmat},
\end{equation}
a matrix of size $(\nu r-q(r-1))\times (\nu-q)r$, whereas for $j\geq r$ we define
\begin{equation}\label{eq:kbarr+}
\bar{K}_{j}=
\pmatnocross\begin{pmat}({....})
I & -G & & & \cr
& I & -G & & \cr
& & \ddots & \ddots & \cr
& & & \ddots & -G \cr
& & & & I \cr
\end{pmat},
\end{equation}
a matrix of size $(\nu-q)r\times(\nu-q)r$.
\medskip\\
In what follows, we need the matrices $\bar{\ck}^i(z)=\bar{K}_0(z)\cdots\bar{K}_i(z)$, where the matrices $\bar{K}_i(z)$ have been introduced in (\ref{eq:kbarj}), (\ref{eq:kbarr-1}), (\ref{eq:kbarr+}).
Then $\bar{\ck}^i(z)$ is of size $\nu r\times(\nu r-q(i+1))$ for $i\leq r-2$ and of size $\nu r\times (\nu-q)r$ for $i\geq r-1$. Note that $\bar{\ck}^i$ is always of full column rank. The next lemma extends Equation~(\ref{eq:nkj}), obtained for the case $\nu\leq q$.

\begin{lemma}\label{lemma:fundamental} Let $0\leq i\leq r-1$. For $0\leq i<k$ one has
\begin{align}
\cn^k(z)\bar{\ck}^i(z)
& = \cn_{i+1}^{k-i-1}\begin{pmatrix} I_q\otimes I_r & 0 \\ 0 & I_{\nu-q}\otimes \cg_{i+1} \end{pmatrix}\nonumber\\
& = \big(\mathcal{R}^1_{ik}(z),\mathcal{R}^2_{ik}(z)\big)\label{eq:nki}
\end{align}
where $\mathcal{R}^1_{ik}(z)\in\rr^{q\times(r-i-1)q}$ and $\mathcal{R}^2_{ik}(z) \in\rr^{q\times(r-i-1)(\nu-q)}$ are explicitly given by
\begin{align*}
\mathcal{R}^1_{ik}(z) & = \frac{1}{(k-i-1)!}\big(\frac{{M}_{i+1}^{(k-i-1)}(z)}{0!},\cdots,\frac{{M}_{i+1}^{(k+q-i-2)}(z)}{(q-1)!}\big), \\
\mathcal{R}^2_{ik}(z) & = \frac{1}{(k-i-1)!}\big(\frac{M^{(k+q-i-1)}(z)G(z)^{i+1}}{q!},\cdots,\frac{M^{(k+\nu-i-2)}(z)G(z)^{i+1}}{(\nu-1)!}\big).
\end{align*}
For $i\geq k$ it holds that $\cn^k(z)\bar{\ck}^i(z)=0$.
\end{lemma}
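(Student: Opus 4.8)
The plan is to prove Lemma~\ref{lemma:fundamental} by induction on $i$, in the same spirit as the proof of Theorem~\ref{thm:kernel1}, but now keeping careful track of the extra columns introduced by the $-G$ blocks in the lower part of $\bar{K}_j$. First I would treat the base case $i=0$. Here $\bar{\ck}^0(z)=\bar{K}_0(z)=\bar K(z)$, and I would compute $\cn^k(z)\bar K(z)$ block-column by block-column. For the first $q$ block-columns (those involving $F=F_0$ and $F'$) the computation is exactly the one in the proof of Proposition~\ref{prop:kerkz1}/Theorem~\ref{thm:kernel1}: using~(\ref{eq:mff}) one gets for the $j$-th such column the value $\tfrac{1}{(k-1)!\,j!}M_1^{(k+j-1)}(z)$, which assembles into $\mathcal{R}^1_{0k}(z)$ after reindexing ($M_1=M_{i+1}$ with $i=0$). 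For the remaining $\nu-q$ block-columns, which carry the blocks $I_r$ on the diagonal and $-G$ above, plus the single coupling block $-\tfrac{1}{0!}G_{00}^{(0)}=-G_{00}=-G$ linking column $q$ to row $q-1$, the relevant computation is $\cn^k_j(z)I_r - \cn^k_{j-1}(z)G(z)$ type differences together with the term coming from row $q-1$; invoking~(\ref{eq:mgq}) (equivalently~(\ref{eq:mgpq})) this telescopes to $\tfrac{1}{(k-1)!\,j!}M^{(k+j-1)}(z)G(z)$, which assembles into $\mathcal{R}^2_{0k}(z)$. For $i\geq k$, i.e.\ $k=0$ here (the only case), $\cn^0(z)\bar K(z)=0$ by Proposition~\ref{prop:kerkz1}.

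Next I would carry out the inductive step. Assume the formula holds for $i-1$, so $\cn^k(z)\bar{\ck}^{i-1}(z)=\cn^{k-i}_{i}(z)\,\mathrm{diag}(I_q\otimes I_r,\ I_{\nu-q}\otimes\cg_i)$ for $i\le k$, and $=0$ for $i>k$. Since $\bar{\ck}^i(z)=\bar{\ck}^{i-1}(z)\bar K_i(z)$, it suffices to post-multiply the induction hypothesis by $\bar K_i(z)$. The point is that $\bar K_i$ has exactly the same \emph{shape} as $\bar K_0$ but with $F$ replaced by $F_i$ and $G$ replaced (in the coupling block) by $\tfrac{1}{i!}G_{i0}^{(i)}$; meanwhile the right-hand side of the induction hypothesis is, up to the block-diagonal correction matrix, a $\cn$-type matrix built from $M_i$ rather than $M$. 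So the computation splits into the same two groups of block-columns. For the first group I would use~(\ref{eq:mff}) applied to $M_i$ in place of $M$ (noting $M_i$ has the same product structure $u\,w_i$, so the identity $M_i^{(j)}F_i=-jM_i^{(j-1)}F_i'=jM_{i+1}^{(j-1)}$ holds by the same argument as the lemma preceding Theorem~\ref{thm:kernel1}, now with $w_i,w_{i+1}$) to produce the $M_{i+1}$-blocks of $\mathcal{R}^1_{ik}$. For the second group, the diagonal $I_r$'s and the $-G$'s above them produce, via~(\ref{eq:mgq}) / Lemma~\ref{lemma:mqpm}, the telescoping that converts $M_i^{(\cdot)}\cg_i$-type entries into $M^{(\cdot)}G^{i+1}$-type entries; here the key algebraic inputs are~(\ref{eq:ggg2}), (\ref{eq:ggg3}) and~(\ref{eq:mqq-1}) of Lemma~\ref{lemma:mqpm}, which are precisely what is needed to move a $G_{i0}^{(i)}$ past $\cg_i$ and collapse $G_{ii}\cg_i=\cg_iG$. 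The block-diagonal correction matrix on the right then updates from $\mathrm{diag}(I,\ I\otimes\cg_i)$ to $\mathrm{diag}(I,\ I\otimes\cg_{i+1})$ because $\cg_{i+1}=G_{i+1,i}\cdots$ absorbs exactly one more factor. Finally, for the vanishing statement: if $i\ge k$, then either $i-1\ge k$ (induction gives $0$ before multiplying by $\bar K_i$) or $i=k$, and in the latter case the induction hypothesis at level $i-1=k-1$ gives $\cn^k(z)\bar{\ck}^{k-1}(z)=\cn^0_k(z)\,\mathrm{diag}(\ldots)$, and one checks directly that $\cn^0_k(z)\bar K_k(z)=0$ by the same argument that proves $\cn^0(z)\bar K(z)=0$ in Proposition~\ref{prop:kerkz1} (the relevant derivatives of $M_k$ that would be needed all vanish, or the two terms in each column cancel by~(\ref{eq:mgq})).

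The main obstacle I anticipate is bookkeeping rather than conceptual: correctly matching the reindexing of block-columns (the shift by $i+1$ in the factorials and in the order of the $M_{i+1}$-derivatives) with the position of the single coupling block $-\tfrac{1}{i!}G_{i0}^{(i)}$ sitting at the $(q-1,q)$-block-slot of $\bar K_i$, and verifying that after post-multiplication this coupling term combines with the last $F_i'$-column contribution to give exactly the first column of $\mathcal{R}^2_{ik}$, namely $\tfrac{1}{(k-i-1)!\,q!}M^{(k+q-i-1)}(z)G(z)^{i+1}$, with the correct $q!$ in the denominator. Getting the denominators right requires a careful use of~(\ref{eq:mqq-1}) with the appropriate choice of the parameter $k$ there. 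I would organize this by first establishing, as an auxiliary step, the identity $\cn^k_j(z)\bar K_j(z)=\cn^{k-1}_{j+1}(z)\,\mathrm{diag}(I_q\otimes I_r,\ I_{\nu-q}\otimes\cg_1^{(j)})$-type relation at a single level (the analogue of~(\ref{eq:mbark})), and then iterating it, which reduces the whole lemma to one clean block computation plus an induction that is purely formal.
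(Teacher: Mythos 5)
Your plan is correct and follows essentially the same route as the paper: induction on $i$ with post-multiplication by $\bar{K}_i(z)$, a block-column case analysis (the $F_i$/$F_i'$ columns, the coupling column at position $q$, and the $I_r$/$-G$ columns), and the same key identities (the analogue of~(\ref{eq:mff}) for $M_i$, Lemma~\ref{lemma:ggg}, Lemma~\ref{lemma:mqpm}, and~(\ref{eq:mgpq})), with the vanishing for $i\geq k$ reduced to the level $i=k$ exactly as the paper does in Theorem~\ref{thm:kernel2}. The only cosmetic difference is that you start the induction at $i=0$ computed directly, whereas the paper effectively starts from the empty product $\bar{\ck}^{-1}$, for which~(\ref{eq:nki}) is $\cn^k(z)$ itself.
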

\begin{proof}
The case $k=0$ has been verified in the proof of Proposition~\ref{prop:kerkz1}. Let therefore $k\geq 1$. To prove that the assertion holds true for  $i < k$, we assume the right hand side of formula~(\ref{eq:nki}) to be valid for $\cn^k(z)\bar{\ck}^{i-1}(z)$ and proceed by induction. To that end we multiply it by $\bar{K}_i$ and verify the answer. As before, we denote the $j$-th block column of $\bar{K}_i$ by $\bar{K}_i^j$, for $j=0,\ldots,\nu-1$. We will discern the four cases $j=0$, $j=1\ldots,q-1$, $j=q$ and $j=q+1,\ldots, \nu-1$.

Let $j=0$. Then the product $\cn^k\bar{\ck}^{i-1}\bar{K}_i^0$ becomes $\frac{1}{(k-i)!}M_i^{(k-i)}F_i$. The analogue of (\ref{eq:mff}), with $M_i$ and $F_i$ substituted for $M$ and $F$, yields that this equals $\frac{1}{(k-i-1)!}M_{i+1}^{(k-i-1)}$, as should be the case.

Let $1\leq j\leq q-1$. One obtains
\begin{equation}\label{eq:nki1}
\cn^k\bar{\ck}^{i-1}\bar{K}_i^j = \frac{1}{(k-i)!}\big(\frac{M_i^{(k+j-i-1)}}{(j-1)!}F_i'+\frac{M_i^{(k+j-i)}}{j!}F_i\big).
\end{equation}
The analogue of Equation~(\ref{eq:mff}) yields $M_i^{(k+j-i-1)}F_i'=-\frac{M_i^{(k+j-i)}}{k+j-i}$. Hence the right hand side of~(\ref{eq:nki1}) becomes
\[
\frac{1}{(k-i)!}\frac{1}{j!}\big(-\frac{j}{k+j-i}M_i^{(k+j-i)}F_i+M_i^{(k+j-i)}F_i\big) = \frac{1}{(k-i-1)!}\frac{1}{j!}\frac{M_i^{(k+j-i)}F_i}{k+j-i}.
\]
Invoking the analog of (\ref{eq:mff}) again, we can rewrite this as $\frac{1}{(k-i-1)!}\frac{M_{i+1}^{(k+j-i-1)}}{j!}$, a typical block of $,\mathcal{R}^1_{ik}$, as required.

Next we consider the more involved case $j=q$. In this case the block column $\bar{K}_i^q$ has entry $-\cg_iG$ on the $(q-1)$st row (see Lemma~\ref{lemma:ggg}) and $I$ on the $q$-th row. Hence we get
\begin{equation}\label{eq:nkij}
\cn^k\bar{\ck}^{i-1}\bar{K}_i^j = \frac{1}{(k-i)!}\big(-\frac{M_i^{(k-i+q-1)}}{(q-1)!}\cg_{i}G+\frac{M^{(k-i+q)}}{q!}G^i\big).
\end{equation}
Using (\ref{eq:mgkj}) we obtain $M_i^{(k-i+q-1)}\cg_iG=M^{(k-i+q-1)}G^{i+1}$.
In view of (\ref{eq:mgpq}), it holds that $M^{(k-i+q)}=(q+k-i)M^{(k-i+q-1)}G$. Hence we van rewrite the right hand side of (\ref {eq:nkij}) as
\[
\frac{1}{(k-i)!}\big(-\frac{M^{(k-i+q-1)}}{(q-1)!}+(q+k-i)\frac{M^{(k-i+q-1)}}{q!}\big)G^{i+1}
\]
which is equal to
\[
\frac{1}{(k-i-1)!}\frac{M^{(k-i+q-1)}}{q!}G^{i+1},
\]
%we can iterate this to obtain
%\begin{equation}\label{eq:mgig}
%M_i^{(k-i+q-1)}\cg_iG=\frac{(k-i+q-1)!}{(k+q)!}M^{(k+q)}.
%\end{equation}
%In view of Lemma~\ref{lemma:mqpm} we can rewrite this as $M^{(q+k-i-1)}G^{i+1}$. Similarly, $M^{(k-i+q)}G^i=(k-i+q)M^{(k-i+q-1)}G^{i+1}$. Hence the right hand side of (\ref{eq:nkij}) becomes
%\[
%\frac{1}{(k-i)!}\big(-\frac{1}{(q-1)!}+\frac{1}{q!}(k-i+q-1)\big)M^{(k-i+q-1)}G^{i+1},
%\]
%which is equal to
%\[
%\frac{1}{(k-i-1)!}\frac{1}{q!}M^{(k-i+q-1)}G^{i+1},
%\]
the first block of $\mathcal{R}^2_{ik}$, as was to be shown.

Finally we treat the case $q+1\leq j\leq \nu-1$. The block columns $\bar{K}_i^j$ have $-G$ at the $(j-1)$st row and $I$ at the $j$th row. Hence, we obtain
\begin{equation}\label{eq:nkij2}
\cn^k\bar{\ck}^{i-1}\bar{K}_i^j = \frac{1}{(k-i)!}\big(-\frac{M^{(k-i+j-1)}G^i}{(j-1)!}G+\frac{M^{(k-i+j)}G^i}{j!}\big).
\end{equation}
Since $k-i+j-1>q$, we apply (\ref{eq:mgpq}) to get $M^{(k-i+j)}=(k-i+j)M^{(k-i+j-1)}G$, and the right hand side of (\ref{eq:nkij2}) reduces to
\[
\frac{1}{(k-i-1)!}\frac{M^{(k-i+j-1)}G^{i+1}}{j!},
\]
a typical block of $\mathcal{R}^2_{ik}$, as desired. This settles the proof of the validity of Equation~(\ref{eq:nki}).
\end{proof}

\begin{theorem}\label{thm:kernel2}
It holds that $\cn^k(z)\bar{\ck}^i(z)=0$, for $i\geq k$. For
$\mu\leq r-1$, the matrix $\bar{\ck}^{\mu-1}(z)$ is of size $\nu
r\times(\nu r-q\mu)$ and has full rank, equal to $\nu r - q\mu$. If
$\mu\geq r$, $\bar{\ck}^{\mu-1}(z)$ is of size $\nu r\times(\nu
-q)r$ and has full rank, equal to $(\nu -q)r$. Summarizing, the
kernel of $\cn(z)$ is $(\nu r- q\min\{\mu,r\})$-dimensional and
spanned by the columns of $\bar{\ck}^{\mu-1}(z)$. The rank of
$\cn(z)$ is equal to $q\min\{\mu,r\}<\nu r$ and therefore $\cn(z)$
never has full column rank.
\end{theorem}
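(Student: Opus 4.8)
The plan is to mimic the strategy that worked for Theorem \ref{thm:kernel1}, using Lemma \ref{lemma:fundamental} as the engine. The lemma already tells us that $\cn^k(z)\bar{\ck}^i(z)=0$ whenever $i\geq k$; this is precisely the hypothesis needed to kill every block row of $\cn(z)$. Concretely, recall that $\cn(z)$ is the block matrix with block rows indexed by $k=0,\ldots,\mu-1$, and the $k$-th block row of $\cn(z)$ is (up to the factorial normalization bookkeeping) the matrix $\cn^k(z)$; see the definition of $\cn^k(z)$ in the proof of Theorem \ref{thm:kernel1}. Hence I would first observe that $\cn(z)\bar{\ck}^{\mu-1}(z)=0$: for each block row $k$ with $0\leq k\leq \mu-1$ we have $i=\mu-1\geq k$, so Lemma \ref{lemma:fundamental} gives $\cn^k(z)\bar{\ck}^{\mu-1}(z)=0$. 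This shows the columns of $\bar{\ck}^{\mu-1}(z)$ lie in $\ker\cn(z)$.

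Next I would pin down the size and rank of $\bar{\ck}^{\mu-1}(z)=\bar{K}_0(z)\cdots\bar{K}_{\mu-1}(z)$. The size follows by a telescoping count of the column counts in the definitions (\ref{eq:kbarj}), (\ref{eq:kbarr-1}), (\ref{eq:kbarr+}): each factor $\bar{K}_i$ with $i\leq r-2$ drops the column count by $q$, while once $i\geq r-1$ the column count stabilizes at $(\nu-q)r$. So for $\mu\leq r-1$ the product has $\nu r-q\mu$ columns, and for $\mu\geq r$ it has $(\nu-q)r$ columns. For the rank, I would argue that each $\bar{K}_i(z)$ has full column rank — which is immediate from its displayed block-triangular form, having the full-rank matrices $F_i(z)$ (resp.\ $I_r$) on the diagonal — and then note that a product of matrices each of which has full column rank again has full column rank (equivalently, $\bar{\ck}^{\mu-1}(z)$ is itself block upper-triangular with full-rank diagonal blocks, as already remarked after (\ref{eq:kbarj}) and in the statement). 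Thus $\bar{\ck}^{\mu-1}(z)$ has rank $\nu r - q\mu$ when $\mu\leq r-1$ and rank $(\nu-q)r$ when $\mu\geq r$; in both cases this number equals $\nu r - q\min\{\mu,r\}$.

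Finally I would close the dimension count. By Proposition \ref{prop:factorN} (or directly from the factorization (\ref{eq:mt}), since $\widetilde{U}_q(z)$ and $\widetilde{W}_r(z)$ are invertible), the rank of $\cn(z)$ equals the rank of $\hat{\ca}$, which by the same factorization machinery (Theorem \ref{thm:ao1} applied after the $D$-scalings of (\ref{eq:aahat})) is $\min\{\mu,r\}\cdot\min\{\nu,q\}$; but here $\nu\geq q+1>q$, so $\min\{\nu,q\}=q$ and the rank of $\cn(z)$ is $q\min\{\mu,r\}$. Hence $\dim\ker\cn(z)=\nu r-q\min\{\mu,r\}$, exactly the number of columns of $\bar{\ck}^{\mu-1}(z)$. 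Combining this with the two facts already established — the columns of $\bar{\ck}^{\mu-1}(z)$ lie in $\ker\cn(z)$, and they are linearly independent — we conclude that they form a basis of $\ker\cn(z)$. Since $q<\nu r$ trivially (as $\nu>q\geq 1$ forces $\nu r\geq \nu > q$, using $r\geq 1$; more to the point $\nu r\geq (q+1)r>q$), $\cn(z)$ has rank strictly less than its column count $\nu r$, so it never has full column rank.

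The main obstacle is the bookkeeping in the second paragraph: one must be careful that the block row of $\cn(z)$ really is $\cn^k(z)$ up to the harmless row-normalization constants $1/k!$ (which do not affect kernels), and that the telescoping column count correctly handles the transition at $i=r-1$ where the factors $\bar{K}_i$ change shape (from (\ref{eq:kbarj}) to (\ref{eq:kbarr-1}) to (\ref{eq:kbarr+})). Everything else is a direct consequence of Lemma \ref{lemma:fundamental} and the rank computation already available via Proposition \ref{prop:factorN}.
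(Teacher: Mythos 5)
The structural parts of your argument --- the telescoping size count for $\bar{\ck}^{\mu-1}(z)$, its full column rank, the rank of $\cn(z)$ obtained from the factorization of Proposition~\ref{prop:factorN}, and the dimension count that turns ``columns lie in the kernel'' into ``columns span the kernel'' --- all match the paper's proof and are fine. The genuine gap is at your first step: you dispose of the assertion $\cn^k(z)\bar{\ck}^i(z)=0$ for $i\geq k$ by quoting the last sentence of Lemma~\ref{lemma:fundamental}. That sentence is precisely the first assertion of the theorem you are asked to prove, and the proof given for Lemma~\ref{lemma:fundamental} establishes only the explicit formula~(\ref{eq:nki}) in the range $i<k$ (by induction on $i$); the vanishing for $i\geq k$ is not proved there --- in the paper it is exactly the content of the proof of Theorem~\ref{thm:kernel2}. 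So your appeal to the lemma is circular within the paper's logical organization, and the central computation is missing from your proposal: one must show $\cn^k(z)\bar{\ck}^k(z)=0$ (which suffices for all $i\geq k$, since the further factors $\bar K_{k+1},\ldots$ only multiply on the right). The paper does this by taking~(\ref{eq:nki}) at $i=k-1$, i.e.\ the explicit expression for $\cn^k(z)\bar{\ck}^{k-1}(z)$ whose blocks are $\frac{1}{j!}M_k^{(j)}$ for $j\le q-1$ and $\frac{1}{j!}M^{(j)}G^k$ for $j\ge q$, multiplying it by the block columns $\bar K_k^j$, and verifying the vanishing separately in the four cases $j=0$, $1\leq j\leq q-1$, $j=q$ and $q+1\leq j\leq \nu-1$, using the analogue of~(\ref{eq:mff}) for $M_k$ and $F_k$, together with~(\ref{eq:mqq-1}), (\ref{eq:mgq}) and~(\ref{eq:mgpq}). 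None of this case checking appears in your proposal, so the heart of the proof is absent.

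A smaller, easily repaired slip: to conclude that $\cn(z)$ never has full column rank you verify $q<\nu r$, but the rank is $q\min\{\mu,r\}$, not $q$; the inequality you need is $q\min\{\mu,r\}\leq qr<(q+1)r\leq \nu r$, which holds because $\nu\geq q+1$ in this section (equivalently, note that $\bar{\ck}^{\mu-1}(z)$ has a positive number of columns, so the kernel is nontrivial).
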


\begin{proof}
We show that $\cn^k(z)\bar{\ck}^i(z)=0$ for $i\geq k$, for which it is clearly sufficient to show that $\cn^k(z)\bar{\ck}^k(z)=0$.
Starting point is Equation~(\ref{eq:nki}) for $i=k-1$. We have
\begin{align*}
\lefteqn{\cn^k(z)\bar{\ck}^{k-1}(z)=} \\
& \big(\frac{{M}_{k}^{(0)}(z)}{0!},\cdots,\frac{{M}_{k}^{(q-1)}(z)}{(q-1)!},\frac{M^{(q)}(z)G(z)^{k}}{q!},\cdots,\frac{M^{(\nu-1)}(z)G(z)^{k}}{(\nu-1)!}\big).
\end{align*}
We multiply this equation with the block columns $\bar{K}_k^j$ and, as above, we discern the case $j=0$, $1\leq j\leq q-1$, $j=q$ and $j=q+1,\ldots,\nu-1$.

For $j=0$ we get $\cn^k(z)\bar{\ck}^{k-1}\bar{K}_k^0=M_k^{(0)}F_k=uw_kF_k=0$,
whereas for $1\leq j\leq q-1$ one computes
\[
\cn^k\bar{\ck}^{k-1}\bar{K}_k^j= \frac{M_k^{(j-1)}}{(j-1)!}F_k'+\frac{M_k^{(j)}}{j!}F_k=0,
\]
in view of an analogue of (\ref{eq:mff}).

For $j=q$, we obtain
\begin{equation}\label{eq:gkk}
\cn^k\bar{\ck}^{k-1}\bar{K}_k^q= -\frac{M_k^{(q-1)}\cg_{k}G}{(q-1)!}+\frac{M^{(q)}G^k}{q!}.
\end{equation}
We can now use Equation~(\ref{eq:mqq-1}) and (\ref{eq:mgq}) to get
\[
\frac{M_k^{(q-1)}\cg_{k}G}{(q-1)!}=\frac{M^{(q-1)}G^{k+1}}{(q-1)!}=\frac{M^{(q)}G^{k}}{q!}.
\]
Hence, the right hand side of (\ref{eq:gkk}) is zero.

Next we consider the case $q+1\leq j \leq\nu+1$. We then get, parallel to (\ref{eq:nkij2}),
\[
\cn^k\bar{\ck}^{i-1}\bar{K}_k^j = \big(-\frac{M^{(j-1)}G}{(j-1)!}+\frac{M^{(j)}}{j!}\big)G^k,
\]
which is zero, in view of Equation~(\ref{eq:mqq-1}).

To show that $\cn(z)\bar{\ck}^{\mu-1}(z)=0$, one has to show that $\cn^k(z)\bar{\ck}^{\mu-1}(z)=0$, for all $k\leq \mu-1$, but this has implicitly been shown above.
The other statements in the theorem have already been addressed before. The theorem is proved.
\end{proof}

\subsection{The case $\nu\geq q+r$}

We follow the approach leading to Proposition~\ref{prop:kerkz2}. We observe that the matrix $\cn(z)$ for $\nu \geq q+r$ can be decomposed as
\[
\cn(z)=\begin{pmatrix}
\cn_*(z) & 0_{\mu q\times r(\nu-q-r-1)}
\end{pmatrix},
\]
where $\cn_*(z)$ is the ``$\cn(z)$ matrix" for the case $\nu=q+r-1$, since all derivatives of $M(z)$ of order higher than $q+r-2$ vanish. Let $\bar{\ck}_*^{\mu-1}(z)$ be the $\bar{\ck}^{\mu-1}(z)$ matrix for the case $\nu=q+r-1$.
Put
\[
\bar{\bar{\ck}}^{\mu-1}=\begin{pmatrix}
\bar{\ck}_*^{\mu-1} & 0 \\
0 & I
\end{pmatrix},
\]
where $I$ is the identity matrix of order $r(\nu-q-r-1)$.
If
$\mu<r$, then $\bar{\ck}_*^{\mu-1}(z)$ is of size $((q+r-1)r\times
(r(r-1)+(r-\mu)q)$, and if $\mu\geq r$, then it has size
$(q+r-1)r\times r(r-1)$.  Then $\bar{\bar{\ck}}^{\mu-1}(z)$ has size $\nu
r\times (\nu r-\mu q)$ for $\mu<r$ and has size $\nu r\times
(\nu-q)r$ for $\mu\geq r$. In short, $\bar{\bar{\ck}}^{\mu-1}(z)$ has dimensions $\nu r\times (\nu r - q\min\{\mu,r\})$.
\begin{theorem}\label{thm:kernel3}
Let $\nu\geq q+r$. The kernel of the matrix $\cn(z)$ is spanned by
the columns of $\bar{\bar{\ck}}^{\mu-1}(z)$, has dimension  $\nu
r-\mu q$ if $\mu<r$ and dimension $(\nu-q)r$ if $\mu\geq r$. So $\dim\ker(\cn(z))=\nu r-q\min\{\mu,r\}$.
\end{theorem}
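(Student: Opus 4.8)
The plan is to reduce Theorem~\ref{thm:kernel3} to the previously established case $q+1\leq\nu<q+r$, exactly in the spirit of how Proposition~\ref{prop:kerkz2} was reduced to Proposition~\ref{prop:kerkz1}. First I would observe that every entry of $\cn(z)$ is of the form $\frac{1}{i!j!}M^{(i+j)}(z)$, and since $M(z)=u(z)w(z)$ is a matrix polynomial of degree at most $q+r-2$ in $z$, all derivatives $M^{(k)}(z)$ with $k\geq q+r-1$ vanish identically. Consequently, for $\nu\geq q+r$, the block columns of $\cn(z)$ indexed by $j=q+r-1,\ldots,\nu-1$ are entirely zero: in block $ij$ we have $\frac{1}{i!j!}M^{(i+j)}(z)$ with $i+j\geq j\geq q+r-1$. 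This justifies the decomposition
\[
\cn(z)=\begin{pmatrix}\cn_*(z) & 0_{\mu q\times r(\nu-q-r-1)}\end{pmatrix},
\]
where $\cn_*(z)$ is precisely the $\cn(z)$-matrix built with the parameter value $\nu=q+r-1$ (the same $q,r,\mu$), a fact that follows by matching blocks: for $j\leq q+r-2$ the $ij$-block of $\cn_*(z)$ and of $\cn(z)$ coincide.

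Next I would use this block structure to describe the kernel. Because $\cn(z)=(\cn_*(z)\ |\ 0)$, a vector $\binom{x}{y}$ (partitioned conformably, $x\in\rr^{(q+r-1)r}$, $y\in\rr^{r(\nu-q-r-1)}$, using that the zero block has $r(\nu-(q+r-1))=r(\nu-q-r-1)$ columns... here one should double-check the column count: $\cn_*(z)$ has $(q+r-1)r$ columns and $\cn(z)$ has $\nu r$ columns, so the zero block has $(\nu-q-r+1)r$ columns; I would state it carefully as $r(\nu-q-r+1)$) lies in $\ker\cn(z)$ if and only if $x\in\ker\cn_*(z)$ and $y$ is arbitrary. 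Hence $\ker\cn(z)=\ker\cn_*(z)\times\rr^{r(\nu-q-r+1)}$ as a direct sum decomposition, and a basis is obtained by padding a basis of $\ker\cn_*(z)$ with zeros in the $y$-coordinates together with the standard basis vectors of the $y$-block. This is exactly the matrix $\bar{\bar{\ck}}^{\mu-1}(z)$ as defined just above the theorem, with $\bar{\ck}_*^{\mu-1}(z)$ supplied by Theorem~\ref{thm:kernel2} applied with $\nu=q+r-1$.

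Then the dimension count is immediate: by Theorem~\ref{thm:kernel2}, $\dim\ker\cn_*(z)=(q+r-1)r-q\min\{\mu,r\}$, so
\[
\dim\ker\cn(z)=(q+r-1)r-q\min\{\mu,r\}+r(\nu-q-r+1)=\nu r-q\min\{\mu,r\},
\]
which splits into $\nu r-\mu q$ when $\mu<r$ and $(\nu-q)r$ when $\mu\geq r$, matching the claimed sizes of $\bar{\bar{\ck}}^{\mu-1}(z)$. Full column rank of $\bar{\bar{\ck}}^{\mu-1}(z)$ is inherited from the full column rank of $\bar{\ck}_*^{\mu-1}(z)$ (Theorem~\ref{thm:kernel2}) together with the obvious fact that the block-diagonal augmentation with an identity matrix preserves full column rank. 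Finally, $\cn(z)\bar{\bar{\ck}}^{\mu-1}(z)=0$ follows directly: the $\cn_*(z)\bar{\ck}_*^{\mu-1}(z)$ part is zero by Theorem~\ref{thm:kernel2}, and the remaining block is $0\cdot I=0$.

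I do not anticipate a genuine obstacle here; the only point requiring a little care is bookkeeping of the column counts (writing $\nu-q-r-1$ versus $\nu-q-r+1$ consistently) and confirming that the matrix denoted $\cn_*(z)$ really does equal the $\nu=q+r-1$ instance of the construction rather than, say, a $\nu=q+r$ instance — this is a matter of checking that no nonzero block of $\cn(z)$ is discarded, which holds because the last nonzero derivative is $M^{(q+r-2)}(z)$ and it appears in block $(0,q+r-2)$ with $\nu=q+r-1$ giving columns $j=0,\ldots,q+r-2$. Once that alignment is fixed, the proof is a routine assembly of Theorem~\ref{thm:kernel2} with the block-triangular observation.
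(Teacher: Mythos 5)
Your proposal is correct and takes essentially the same route as the paper, which proves the theorem by the same reduction: decompose $\cn(z)=\bigl(\cn_*(z)\ \ 0\bigr)$ using the vanishing of $M^{(k)}(z)$ for $k\geq q+r-1$ and then invoke Theorem~\ref{thm:kernel2} with $\nu=q+r-1$, exactly as Proposition~\ref{prop:kerkz2} was reduced to Proposition~\ref{prop:kerkz1}. Your bookkeeping is in fact more careful than the text's: the zero block (and the identity block in $\bar{\bar{\ck}}^{\mu-1}$) should indeed have $r(\nu-q-r+1)$ columns, not $r(\nu-q-r-1)$ as written in the paper.
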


\begin{proof}
Similar to the proof of Proposition~\ref{prop:kerkz2}, using the results of Theorem~\ref{thm:kernel2} for the case $\nu=q+r-1$.
\end{proof}

\section{Intermezzo, properties of $\ca^0(z)$}\label{section:intermezzo}
\setcounter{equation}{0}

The results of this section will be used in Section~\ref{section:rightinverse}, where we want to find (special) right inverses of the matrix $\cm^0(z)$.

%The proof of Lemma~\ref{lemma:unim} is now simple for $q=r=\nu$.
%The matrices $U_q(z)$ and $W_r(z)$ are unimodular, each of them with
%determinant $(q-1)!$, as one easily computes. Hence $\det
%M=((q-1)!)^{2q}\det\mathcal{A}$. Later on we prove that
%$\det\mathcal{A}=\pm 1$, see Corollary~\ref{cor:aa}.\footnote{to
%be put somewhere else, corollary has disappeared}
We focus on the matrix
$\mathcal{A}^0=(A^0,\ldots,A^{\nu-1})\in \rr^{q\times \nu r}$, the first block row of $\ca$, the matrix defined in Section~\ref{section:ca}. One
directly sees that the rank of $\mathcal{A}^0$ is equal to
$\min\{q,\nu\}$, although it also follows from Theorem~\ref{thm:ao1} with $\mu=1$. Hence $\mathcal{A}^0$ has full rank iff $\nu\geq
q$. We introduce the matrix $\mathcal{B}^0\in\rr^{\nu r\times q}$
consisting of the $r\times q$ blocks $B^k$ as follows.
\begin{equation}\label{eq:b}
\mathcal{B}^0=\begin{pmatrix} B^0 \\ \vdots \\ B^{\nu-1}
\end{pmatrix}
\end{equation}
where each $B^k$ has elements
\begin{equation*}
B^k_{ij}= \left\{ \begin{array}{ll} (-1)^{i}{q \choose k+1} & \mbox{ if } i+j=k \\
0 & \mbox{ else, }
\end{array}
\right.
\end{equation*}
for $i=0,\ldots,r-1$, $j=0,\ldots,q-1$.

\begin{lemma}\label{lemma:a0b}
Let $\nu\geq q$. Then $\mathcal{A}^0$ has full row rank and
$\mathcal{A}^0\mathcal{B}^0=I$. In other words, $\mathcal{B}^0$ is a
right inverse of $\mathcal{A}^0$.
\end{lemma}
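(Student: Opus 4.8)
The plan is to verify directly that the matrix product $\mathcal{A}^0\mathcal{B}^0$ equals the $q\times q$ identity, the full-rank claim having already been noted (and following from Theorem~\ref{thm:ao1} with $\mu=1$). Since $\mathcal{A}^0$ is a block row $(A^0,\ldots,A^{\nu-1})$ and $\mathcal{B}^0$ a block column with blocks $B^0,\ldots,B^{\nu-1}$, we have $\mathcal{A}^0\mathcal{B}^0=\sum_{k=0}^{\nu-1}A^kB^k$. So the task reduces to computing the entries of $\sum_k A^kB^k$ and showing the sum telescopes (or collapses combinatorially) to $I_q$.

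First I would write out the $(m,n)$-entry of $A^kB^k$ for $0\le m,n\le q-1$. Recall from~(\ref{eq:A}) that $A^k_{m\ell}$ is nonzero only when $m+\ell=k$, in which case it equals $\binom{k}{m}$; and $B^k_{\ell n}$ is nonzero only when $\ell+n=k$, in which case it equals $(-1)^{\ell}\binom{q}{k+1}$. Hence $(A^kB^k)_{mn}=\sum_{\ell}A^k_{m\ell}B^k_{\ell n}$ forces $\ell=k-m$ from the first factor and $\ell=k-n$ from the second, so the term is nonzero only when $m=n$ and $\ell=k-m$, giving $(A^kB^k)_{mn}=\delta_{mn}\binom{k}{m}(-1)^{k-m}\binom{q}{k+1}$ (with the convention that $\binom{k}{m}=0$ for $k<m$, and the constraint $0\le k-m\le r-1$ coming from the block sizes, which is harmless once $\nu\ge q$ because the relevant $k$ range lies in $[m,q-1]$). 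Summing over $k$ from $0$ to $\nu-1$, only $k\ge m$ contributes, so $(\mathcal{A}^0\mathcal{B}^0)_{mn}=\delta_{mn}\sum_{k=m}^{q-1}(-1)^{k-m}\binom{k}{m}\binom{q}{k+1}$.

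It then remains to prove the scalar identity $\sum_{k=m}^{q-1}(-1)^{k-m}\binom{k}{m}\binom{q}{k+1}=1$ for every $m\in\{0,\ldots,q-1\}$. The cleanest route is a generating-function or finite-difference argument: reindex with $j=k+1$ so the sum becomes $\sum_{j=m+1}^{q}(-1)^{j-m-1}\binom{j-1}{m}\binom{q}{j}$, and recognize this (after the standard manipulation $\binom{j-1}{m}=\binom{j}{m+1}-\binom{j-1}{m+1}$, or via the coefficient extraction $\binom{j-1}{m}=[x^{m}](1+x)^{j-1}$) as an alternating binomial sum that collapses. Concretely, $\sum_{j=0}^{q}(-1)^{j}\binom{q}{j}(1+x)^{j-1}=(1+x)^{-1}\sum_{j=0}^q\binom{q}{j}(-(1+x))^j=(1+x)^{-1}(-x)^q$, and extracting the coefficient of $x^m$ with the appropriate sign bookkeeping yields exactly $1$ for $0\le m\le q-1$ (the $j=0$ term must be subtracted off, but it contributes only to $x^{-1}$ and higher-order nuisance that I would handle by noting it does not affect coefficients of $x^m$ with $m\ge 0$ once we are careful, or more simply by starting the sum at $j=1$ throughout). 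I expect this combinatorial identity to be the only real obstacle; everything else is bookkeeping with the sparsity patterns of $A^k$ and $B^k$. An alternative, possibly slicker, finish is to avoid the explicit sum entirely by invoking the factorization machinery: $\mathcal{A}^0$ is the first block row of $\ca$, and one could try to exhibit $\mathcal{B}^0$ as the first block column of a right inverse of $\ca$ built from the factorization~(\ref{eq:lal1}) together with the explicit inverse of $\bar\ca$ from Theorem~\ref{thm:ao1}; but since here $\bar\ca$ need not be square, I would keep the direct computation as the primary argument and mention the identity proof as above.
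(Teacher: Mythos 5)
Your proposal is correct and reproduces the paper's reduction: the anti-diagonal sparsity of $A^k$ and $B^k$ forces each product $A^kB^k$ to be diagonal, so the whole lemma comes down to the scalar identity $\sum_{k=m}^{q-1}(-1)^{k-m}\binom{k}{m}\binom{q}{k+1}=1$ for $0\leq m\leq q-1$. The only real divergence is how that identity is verified: the paper writes $\frac{1}{k+1}=\int_0^1x^k\,\dd x$, recognizes $\int_0^1(1-x)^{q-1-m}x^m\,\dd x$ as the Beta integral $B(q-m,m+1)=\frac{(q-1-m)!\,m!}{q!}$, and cancels; you instead extract the coefficient of $x^m$ from $\sum_{j=1}^{q}(-1)^j\binom{q}{j}(1+x)^{j-1}=\big((-x)^q-1\big)/(1+x)$, which gives $(-1)^{m+1}$ for $m\leq q-1$ and hence the value $1$ after the sign bookkeeping. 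Both finishes are routine and valid; your own suggestion to start the sum at $j=1$ (so only polynomials occur) is indeed the clean way to do it — note only that $(1+x)^{-1}$ expands in nonnegative powers of $x$, so the remark about an ``$x^{-1}$'' contribution is off, though immaterial. One caveat, which your write-up shares with the paper but mis-attributes: the collapse onto the diagonal needs the anti-diagonal index $\ell=k-m$ to satisfy $\ell\leq r-1$ for every $k\leq q-1$, and this is guaranteed by $r\geq q$, not by $\nu\geq q$ as your parenthetical claims (for $q=2$, $r=1$, $\nu=2$ one actually gets $\mathcal{A}^0\mathcal{B}^0=\mathrm{diag}(2,1)\neq I$). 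The lemma is only invoked in the paper under the hypothesis $r\geq q$ of Proposition~\ref{prop:mki}, where this issue disappears, so your argument is correct to exactly the same extent as the paper's.
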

\begin{proof}
We have to compute the $ij$-elements of
$T:=\sum_{k=0}^{\nu-1}A^kB^k$. Using the definitions of the
matrices $A^k$ and $B^k$ that only have nonzero entries on
corresponding anti-diagonals, we see that $A^kB^k$ is a diagonal
matrix. Hence we only have to consider the $ii$-entries of $T$.
Note that $B^k=0$ for $k\geq q$. One obtains
\begin{align*}
T_{ii} & = \sum_{k=0}^{q-1} (A^kB^k)_{ii} \\
& = \sum_{k=0}^{q-1}{k \choose i}(-1)^{k-i}{q \choose k+1}
\\
& = \frac{q!}{i!(q-1-i)!}\sum_{k=i}^{q-1}{q-1-i \choose
k-i}\frac{(-1)^{k-i}}{k+1}.
\end{align*}
To compute the latter summation, we write it as
\begin{align*}
\int_0^1\sum_{k=i}^{q-1}{q-1-i \choose k-i}(-1)^{k-i}x^k\,\dd x &
= \int_0^1\sum_{j=0}^{q-1-i}{q-1-i \choose
j}(-x)^jx^i\,\dd x \\
& = \int_0^1 (1-x)^{q-1-i}x^i\,\dd x \\
& = B(q-i,i+1),
\end{align*}
by definition of the $\beta$-function $B(\cdot,\cdot)$. Using the
well-known fact that this can be computed in terms of
$\Gamma$-functions
($B(\alpha,\beta)=\Gamma(\alpha)\Gamma(\beta)/\Gamma(\alpha+\beta)$)
we obtain
\[
B(q-i,i+1)=\frac{(q-1-i)!i!}{q!}.
\]
It follows that $T_{ii}=1$.
\end{proof}
We need some additional properties.
\begin{lemma}\label{lemma:bj}
It holds that $X^k:=B^kS_q^\top +S_r B^k=0$, if $k\geq q$ or $k\leq
r-1$. For the case $r\leq k \leq q-1$ (which requires $q>r$) only
the last row of this matrix is nonzero. In fact, this row is equal
to $(-1)^{r-1}{q\choose k+1}e^\top_{k-r}$, with the convention
that $e_i$ denotes the $i$-th basis vector of $\rr^q$
($i=0,\ldots,q-1$).
\end{lemma}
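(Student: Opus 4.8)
The plan is to compute the matrix $X^k = B^kS_q^\top + S_rB^k$ directly from the explicit entry formula for $B^k$, exploiting the fact that $B^k$ is supported only on the anti-diagonal $\{(i,j): i+j=k\}$. Recall $B^k_{ij} = (-1)^i\binom{q}{k+1}$ if $i+j=k$ and $0$ otherwise, and that $B^k = 0$ identically when $k\geq q$ (since then $\binom{q}{k+1}=0$); this disposes of the case $k\geq q$ immediately. So from now on assume $0\leq k\leq q-1$, and write $c_k := \binom{q}{k+1}$ for brevity (a nonzero scalar in this range).

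First I would identify the effect of the two shift operations on the support. Post-multiplication by $S_q^\top$ (whose $(a,b)$ entry is $\delta_{a,b+1}$, i.e.\ it shifts columns to the right) sends the entry in column $j$ of $B^k$ to column $j+1$, so $(B^kS_q^\top)_{ij} = B^k_{i,j-1}$, nonzero precisely when $i+(j-1)=k$, i.e.\ on the anti-diagonal $i+j=k+1$, with value $(-1)^i c_k$. Pre-multiplication by $S_r$ (entry $(a,b) = \delta_{a+1,b}$, shifting rows up) gives $(S_rB^k)_{ij} = B^k_{i+1,j}$, nonzero when $(i+1)+j=k$, i.e.\ again on the anti-diagonal $i+j=k-1$... wait, let me be careful: $(i+1)+j=k$ means $i+j=k-1$. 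Hmm, that does not match. Let me recheck against the claimed answer, which says the nonzero row (when it occurs) is the \emph{last} row $i=r-1$ with entry $(-1)^{r-1}c_k e_{k-r}^\top$, i.e.\ at column $j=k-r$, so $i+j = (r-1)+(k-r) = k-1$. So in fact the surviving anti-diagonal is $i+j=k-1$, and I should recheck the column-shift computation: $(B^kS_q^\top)_{ij}=\sum_a B^k_{ia}(S_q^\top)_{aj} = \sum_a B^k_{ia}\delta_{a,j+1}=B^k_{i,j+1}$, nonzero when $i+(j+1)=k$, i.e.\ $i+j=k-1$, value $(-1)^i c_k$. Good --- both terms live on the anti-diagonal $i+j=k-1$, consistent with the stated answer.

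So on the anti-diagonal $i+j=k-1$ the $(i,j)$ entry of $X^k$ is $(-1)^i c_k + (-1)^{i+1}c_k = 0$ \emph{provided} both contributing entries of $B^k$ actually exist, i.e.\ $(i,j+1)$ is a valid index of $B^k$ (needs $j+1\leq q-1$, automatic since $j\le k-1\le q-2$, and $i\leq r-1$) and $(i+1,j)$ is valid (needs $i+1\leq r-1$, i.e.\ $i\leq r-2$). The cancellation therefore holds for all entries \emph{except possibly} those in the last row $i=r-1$, where only the first term $B^{k}_{r-1,j+1}$ survives (the term $S_rB^k$ contributes $B^k_{r,j}$ which is zero because row index $r$ is out of range). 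In the last row, the entry at $j = k-1-(r-1)=k-r$ equals $(-1)^{r-1}c_k$, and this requires $0\leq k-r\leq q-1$, i.e.\ $r\leq k\leq q-1+r$; combined with $k\leq q-1$ this gives exactly $r\leq k\leq q-1$ (which forces $q>r$). If instead $k\leq r-1$, then $j=k-r<0$ is not a valid column, so even the last row entry vanishes and $X^k=0$; this handles the case $k\leq r-1$. This yields precisely the three claimed cases. There is essentially no obstacle here --- the whole lemma is bookkeeping about which shifted anti-diagonal entries fall inside the index range --- so the only thing to be careful about is the boundary analysis at the last row $i=r-1$ and getting the index $k-r$ and sign $(-1)^{r-1}$ right, which I have just verified.

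\begin{proof}
Throughout fix $0\leq k$ and write $c_k=\binom{q}{k+1}$, so that $B^k_{ij}=(-1)^i c_k$ when $i+j=k$ (with $0\leq i\leq r-1$, $0\leq j\leq q-1$) and $B^k_{ij}=0$ otherwise. If $k\geq q$ then $c_k=0$, so $B^k=0$ and hence $X^k=0$.

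Assume now $0\leq k\leq q-1$. Using $(S_q^\top)_{aj}=\delta_{a,j+1}$ and $(S_r)_{ia}=\delta_{i+1,a}$ we compute, for $0\leq i\leq r-1$ and $0\leq j\leq q-1$,
\[
(B^kS_q^\top)_{ij}=B^k_{i,j+1},\qquad (S_rB^k)_{ij}=B^k_{i+1,j},
\]
where on the right $B^k_{i,j+1}$ is understood to be $0$ if $j+1>q-1$ and $B^k_{i+1,j}$ is understood to be $0$ if $i+1>r-1$. The entry $B^k_{i,j+1}$ is nonzero exactly when $i+(j+1)=k$, i.e.\ $i+j=k-1$, in which case it equals $(-1)^i c_k$. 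The entry $B^k_{i+1,j}$ is nonzero exactly when $(i+1)+j=k$, i.e.\ again $i+j=k-1$, and $i+1\leq r-1$, in which case it equals $(-1)^{i+1}c_k$. Consequently $X^k_{ij}=0$ unless $i+j=k-1$, and for such $(i,j)$:
\[
X^k_{ij}=(-1)^i c_k+(-1)^{i+1}c_k=0\qquad\text{if }i\leq r-2,
\]
while if $i=r-1$ (so $j=k-r$) the second term is absent and
\[
X^k_{r-1,\,k-r}=(-1)^{r-1}c_k,
\]
this being a genuine entry only when $0\leq k-r\leq q-1$. Since we already have $k\leq q-1$, the condition $k-r\geq 0$ amounts to $r\leq k\leq q-1$ (which forces $q>r$); in that range the only nonzero entry of $X^k$ is $X^k_{r-1,k-r}=(-1)^{r-1}c_k$, i.e.\ the last row of $X^k$ equals $(-1)^{r-1}\binom{q}{k+1}e_{k-r}^\top$ and all other rows vanish. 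If instead $k\leq r-1$, then $k-r<0$ is not a valid column index, so the last row vanishes as well and $X^k=0$. This establishes all the assertions.
\end{proof}
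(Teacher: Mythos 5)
Your proof is correct and follows essentially the same route as the paper: an entrywise computation of $B^kS_q^\top$ and $S_rB^k$ from the explicit anti-diagonal formula for $B^k$, followed by boundary bookkeeping showing cancellation everywhere except possibly the last row $i=r-1$, column $k-r$. The only cosmetic difference is that you dispose of $k\geq q$ up front via $\binom{q}{k+1}=0$, whereas the paper absorbs that case into its index analysis; the substance is identical.
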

\begin{proof}
We compute the $ij$-element of $X^k=B^kS_q^\top +S_r B^k$. For
$i=0,\ldots,r-1$ and $j=0,\ldots,q-1$ it is equal to
\begin{align*}
X^k_{ij} & = \sum_{l=0}^{q-1}B^k_{il}1_{\{l=j+1\}}+\sum_{l=0}^{r-1}1_{\{i+1=l\}}B^k_{lj} \\
& = \sum_{l=0}^{q-1}B^k_{i,j+1}1_{\{l=j+1\}}+\sum_{l=0}^{r-1}1_{\{i+1=l\}}B^k_{i+1,j} \\
& = B^k_{i,j+1}1_{\{0\leq j+1\leq q-1\}}+1_{\{0\leq i+1\leq r-1\}}B^k_{i+1,j} \\
& = B^k_{i,j+1}1_{\{0\leq j\leq q-2\}}+1_{\{0\leq i\leq r-2\}}B^k_{i+1,j} \\
& = {q \choose k+1}1_{\{i+j+1=k\}}\big((-1)^i1_{\{0\leq j\leq q-2\}}+(-1)^{i+1}1_{\{0\leq i\leq r-2\}}\big) \\
& = {q \choose k+1}1_{\{i+j+1=k\}}(-1)^i\big(1_{\{0\leq j\leq
q-2\}}-1_{\{0\leq i\leq r-2\}}\big).
\end{align*}
Clearly, for $i=0,\ldots,r-2$ and $j=0,\ldots,q-2$, the last
expression in the display equals zero, as is the case for $i=r-1$
and $j-q-1$. We next consider the two remaining cases, the first
being $i\leq r-2$ and $j=q-1$. Since we only have to consider
$i=k-j-1$, we get $i=k-q$, which has to be nonnegative, so $k\geq
q$. But then the binomial coefficient is equal to zero. The
remaining case is $i=r-1$. Then we only have to consider $j=k-r$,
the other values of $j$ again give zero. Note that this implies
that $k\geq r$ is necessary to get a nonzero outcome, whereas we
already know that also $k\leq q-1$ is necessary. Hence nonzero
elements in the last row of $X^k$ can only occur if $r\leq q-1$.
Under this last condition we find $X^k_{r-1,j}={q\choose
k+1}(-1)^{r-1}1_{\{j=k-r\}}$. Hence the bottom row of $X^k$ equals
${q\choose k+1}(-1)^{r-1}(1_{\{k=r\}},\ldots,1_{\{k=r+q-1\}})$,
which is equal to ${q\choose k+1}(-1)^{r-1}e^\top_{k-r}$, for
$k=r,\ldots,q-1$.
\end{proof}

\begin{remark}\label{remark:bjnot}
Here is an example where $X^k$ as defined in Lemma~\ref{lemma:bj} is not equal to zero. Take $k=r=1$ and $q=2$. Then $B^1=(0 \quad 1)$ and $X^1=(1\quad 0)$.
\end{remark}
%Let $D_q$ be the diagonal
%matrix with $ii$-diagonal element $i!$, $i=0,\ldots,q-1$. Likewise
%we have $D_r$.
%As before, $S_q^\top \in\rr^{q\times q}$ is the shifted identity matrix, it has
%elements $S_{ij}=\delta_{i,j+1}$ and $S_r$
%likewise.\footnote{definitie van deze, en andere, matrices op de
%juiste plaats!}
%Furthermore, we define\footnote{see also page 2, explicit definition}
%\begin{align*}
%\widetilde{U}_q(z) & =U_q(z)D_q^{-1} \\
%\widetilde{W}_r(z) & =D_r^{-1}W_r(z).
%\end{align*}

\begin{proposition}\label{prop:gconstant}
Define $H_k:\rr\to\rr^{r\times q}$ by $H_k(z)=\widetilde{W}_r(z)
D_r^{-1}B^kD_q^{-1}\widetilde{U}_q(z)$. Then $H_k$ is a constant
mapping, $H_k(z)\equiv D_r^{-1}B^kD_q^{-1}$, under the condition
$k\geq q$ or $k\leq r-1$.
\end{proposition}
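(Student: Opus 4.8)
The plan is to show that the $z$-dependent conjugation by the normalized confluent Vandermonde matrices acts trivially on $B^k$ in the stated range of $k$. The natural route is to reduce the claim about $H_k$ to a pair of intertwining identities for $\widetilde U_q(z)$ and $\widetilde W_r(z)$, and then to invoke Lemma~\ref{lemma:bj}. First I would rewrite $H_k(z)=\widetilde W_r(z)D_r^{-1}B^kD_q^{-1}\widetilde U_q(z)$ and differentiate with respect to $z$, using the elementary facts (coming from the structure $\widetilde U_q(z)^{ij}=\binom{i}{j}z^{i-j}$ recorded in the notations section) that $\frac{\dd}{\dd z}\widetilde U_q(z)=S_q^\top\widetilde U_q(z)$ and $\frac{\dd}{\dd z}\widetilde W_r(z)=\widetilde W_r(z)S_r$. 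Equivalently, $\widetilde U_q(z)^{-1}S_q^\top\widetilde U_q(z)=S_q^\top$ and the analogous relation for $\widetilde W_r$; these are immediate from $\widetilde U_q(z)^{-1}=\widetilde U_q(-z)$ and the fact that $S_q^\top$ commutes with nothing but is conjugation-invariant under the one-parameter group $\widetilde U_q(z)$, which one checks on entries in one line.

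With those in hand, differentiating $H_k$ gives
\[
H_k'(z)=\widetilde W_r(z)\big(S_r D_r^{-1}B^kD_q^{-1}+D_r^{-1}B^kD_q^{-1}S_q^\top\big)\widetilde U_q(z).
\]
Now I would pull the diagonal matrices through: since $D_r$ and $D_q$ are diagonal with factorial entries, $S_r D_r^{-1}=D_r^{-1}\cdot(D_r S_r D_r^{-1})$ and similarly on the right, and $D_r S_r D_r^{-1}$, $D_q^{-1}S_q^\top D_q$ are again (scaled) shift matrices. The precise bookkeeping shows that $S_r D_r^{-1}B^kD_q^{-1}+D_r^{-1}B^kD_q^{-1}S_q^\top$ is, up to the fixed diagonal conjugation, proportional to $S_r B^k+B^kS_q^\top=X^k$ of Lemma~\ref{lemma:bj} — indeed one should set things up so that the factorials match exactly, which is the whole point of inserting $D_r^{-1}$ and $D_q^{-1}$ into the definition of $H_k$. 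By Lemma~\ref{lemma:bj}, $X^k=0$ whenever $k\ge q$ or $k\le r-1$, hence $H_k'(z)\equiv 0$ on $\rr$, so $H_k$ is constant; evaluating at $z=0$ gives $\widetilde W_r(0)=I_r$, $\widetilde U_q(0)=I_q$, whence $H_k(z)\equiv D_r^{-1}B^kD_q^{-1}$.

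I expect the main obstacle to be purely a matter of getting the diagonal-matrix bookkeeping exactly right: one must verify that $D_r(S_r D_r^{-1})=S_r\cdot(\text{diagonal})$ combines with $D_q^{-1}(S_q^\top D_q)$ in such a way that the two terms in $H_k'(z)$ assemble into a scalar multiple of $\widetilde W_r(z)\,D_r^{-1}(S_rB^k+B^kS_q^\top)D_q^{-1}\,\widetilde U_q(z)$ rather than some more general combination; this hinges on the identity $\tfrac{1}{(i+1)!}=\tfrac{1}{i!}\cdot\tfrac{1}{i+1}$ lining up the shift-weights on both sides. An alternative, slightly cleaner route avoids differentiation entirely: expand $\widetilde W_r(z)=\sum_m \tfrac{z^m}{m!}(\text{something})$ and $\widetilde U_q(z)$ likewise, and show directly that all positive powers of $z$ in the product $\widetilde W_r(z)D_r^{-1}B^kD_q^{-1}\widetilde U_q(z)$ cancel because each such coefficient is a telescoping sum of terms that collapse via $S_rB^k+B^kS_q^\top=0$; but the differentiation argument is shorter to write, so that is the one I would present.
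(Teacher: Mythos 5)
Your overall plan (differentiate $H_k$ and reduce the vanishing of the derivative to Lemma~\ref{lemma:bj}) is the same as the paper's, but the identities you base it on are false, and the display you derive from them does not hold. The correct derivative formulas, proved in the paper as (\ref{eq:uprime}) and (\ref{eq:wprime}), are $\widetilde{U}_q'(z)=\widetilde{U}_q(z)\,D_qS_q^\top D_q^{-1}$ and $\widetilde{W}_r'(z)=D_r^{-1}S_rD_r\,\widetilde{W}_r(z)$; the unweighted versions $\widetilde{U}_q'(z)=S_q^\top\widetilde{U}_q(z)$, $\widetilde{W}_r'(z)=\widetilde{W}_r(z)S_r$ are wrong, and $S_q^\top$ is not conjugation-invariant under $\widetilde{U}_q(z)$: for $q=3$ the $(2,0)$ entry of $S_q^\top\widetilde{U}_q(z)$ is $z$ while that of $\widetilde{U}_q(z)S_q^\top$ is $2z$. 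Consequently your formula for $H_k'(z)$ is incorrect, and its middle factor $S_rD_r^{-1}B^kD_q^{-1}+D_r^{-1}B^kD_q^{-1}S_q^\top$ is \emph{not} equal to $D_r^{-1}(S_rB^k+B^kS_q^\top)D_q^{-1}$ up to anything harmless: its $ij$ entry on the antidiagonal $i+j=k-1$ equals $(-1)^i\binom{q}{k+1}\bigl(\frac{1}{i!\,(j+1)!}-\frac{1}{(i+1)!\,j!}\bigr)$, which is nonzero for $i\neq j$ (take $q=r=3$, $k=2$, so $k\leq r-1$, and $(i,j)=(0,1)$, giving $-\frac12$). So the factorial weights do not line up in the configuration you wrote, and the argument fails exactly at the point you flagged as ``bookkeeping''.

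With the correct formulas the weighted shifts come out on the outside of the product, not inside: $H_k'(z)=\hat S_r H_k(z)+H_k(z)\hat S_q^\top$ with $\hat S_r=D_r^{-1}S_rD_r$ and $\hat S_q^\top=D_qS_q^\top D_q^{-1}$, which is not manifestly zero. The paper closes this by an ODE argument: the constant function $D_r^{-1}B^kD_q^{-1}$ solves this linear equation, because $\hat S_rD_r^{-1}=D_r^{-1}S_r$ and $D_q^{-1}\hat S_q^\top=S_q^\top D_q^{-1}$ make the right-hand side equal to $D_r^{-1}(S_rB^k+B^kS_q^\top)D_q^{-1}=0$ by Lemma~\ref{lemma:bj}; since $H_k(0)=D_r^{-1}B^kD_q^{-1}$ and the solution with this initial value is unique, $H_k$ is that constant. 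If you want to keep your direct route ($H_k'\equiv 0$), it can be repaired by one extra observation you did not supply: $\widetilde{W}_r(z)=\exp(z\hat S_r)$ and $\widetilde{U}_q(z)=\exp(z\hat S_q^\top)$, so $\hat S_r$ and $\hat S_q^\top$ (not $S_r$ and $S_q^\top$) commute with these factors, and then indeed $H_k'(z)=\widetilde{W}_r(z)\,D_r^{-1}(S_rB^k+B^kS_q^\top)D_q^{-1}\,\widetilde{U}_q(z)\equiv 0$. Either way, the weighted shifts together with one of these two supplementary arguments are the missing ingredients in your proof.
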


\begin{proof}
First we prove the following auxiliary results.
One has
\begin{align}
\widetilde{U}_q'(z) & =\widetilde{U}_q(z)D_qS_q^\top D_q^{-1} \label{eq:uprime}\\
\widetilde{W}_r'(z) & =D_r^{-1}S_r D_r\widetilde{W}_r(z).\label{eq:wprime}
\end{align}
%Hence $\widetilde{U}_q(z)=\exp(D_qS_q^\top D_q^{-1}z)$.
Equation~(\ref{eq:uprime}) follows from the definition of $\widetilde{U}_q(z)$ and the elementary
identity $U_q'(z)=U_q(z)S_q^\top $.  Equation~(\ref{eq:wprime}) can be proved similarly.

We now compute
\begin{align*}
H_k'(z) & = \widetilde{W}_r'(z) D_r^{-1}B^kD_q^{-1}\widetilde{U}_q(z) +
\widetilde{W}_r(z)
D_r^{-1}B^kD_q^{-1}\widetilde{U}_q'(z) \\
& = D_r^{-1}S_r D_r \widetilde{W}_r(z)
D_r^{-1}B^kD_q^{-1}\widetilde{U}_q(z) + \widetilde{W}_r(z)
D_r^{-1}B^kD_q^{-1}\widetilde{U}_q(z) D_qS_q^\top D_q^{-1},
\end{align*}
according to Equations~(\ref{eq:uprime}) and (\ref{eq:wprime}). Putting
$\hat{S}_r=D_r^{-1}S_rD_r$ and $\hat{S}_q=D_qS_q D_q^{-1}$, we see that $H_k$
satisfies the linear differential equation
\begin{equation}\label{eq:lde}
H_k'(z)=\hat{S}_r H_k(z) + H_k(z) \hat{S}_q^\top.
\end{equation}
This equation has a unique  solution and we claim that it is given
by the constant function as asserted. To that end we check
\begin{align*}
\hat{S}_r D_r^{-1}B^kD_q^{-1} + D_r^{-1}B^kD_q^{-1}\hat{S}_q^\top & =
D_r^{-1}(S_r B^k+B^kS_q^\top)D_q^{-1} \\
& = 0,
\end{align*}
by Lemma~\ref{lemma:bj}, since $k\geq q$ or $k\leq r-1$.
Furthermore, we have $H_k(0)=D_r^{-1}B^kD_q^{-1}$, since
$\widetilde{U}_q(0)=I_q$.
\end{proof}
\begin{remark}
Equation~(\ref{eq:lde}) has as the general solution
\begin{equation}\label{eq:hgh}
H_k(z)=\exp(\hat{S}_r z)H_k(0)\exp(\hat{S}_q^\top z),
\end{equation}
where the exponentials can be computed as finite sums, since $S_q^\top $
and $S_r$ are nilpotent. Elementary computations yield for
instance that the $ij$-element of $\exp(\hat{S}_q^\top z)$ is given by
${i\choose j}z^{i-j}$ for $i\geq j$ and zero otherwise. Hence we
obtain $\exp(\hat{S}_qz)=\widetilde{U}_q(z)$, which is in agreement with the
definition of $H_k(z)$.

An example of a solution that is not constant is obtained for
$r=1$ and $q=2$. For the  case $k=1$ one finds directly from the definition of $H_k(z)$ that $H_1(0)=B^1=(0\quad 1)$ and
$H_k(z)=\begin{pmatrix}z & 1\end{pmatrix}$ in view of Remark~\ref{remark:bjnot}. This is in agreement with Equation~(\ref{eq:hgh}), whose right hand side is equal to
\[
(0\quad 1)\begin{pmatrix}
1 & 0 \\
z & 1
\end{pmatrix}.
\]
\end{remark}

\section{The equation $\cm^0(z)C=I$}\label{section:rightinverse}
\setcounter{equation}{0}

We return to one of our original
aims, finding a right inverse of the $q\times \nu r$ matrix
$\cm^0(z)=(M(z),\ldots,M^{(\nu-1)}(z))\in\rr^{q\times r\nu}$. Recall from Theorem~\ref{thm:ao1} and Proposition~\ref{prop:factorM} that $\cm^0(z)$ has rank equal to $\min\{\nu,q\}$. Hence the matrix is
of full rank iff $\nu\geq q$. Equations like $\cm^0(z)X=b$ will in
general not have a solution $X$ for a given $b\in \rr^{\nu r\times
1}$, if $\nu<q$. In fact, we are interested in solutions $X$ that
are independent of $z$. It is easy to see that such solutions only exist if $b=0$ and then $X=0$. The uninteresting case $\nu < q$ will
therefore be ignored and the standing assumption in the remainder of this section is $\nu\geq q$. Under this assumption, there are two subcases to discern, $r\geq q$ and $r<q$.
%In the notation of the present
%section, Proposition~\ref{prop:kerkz1} is equivalent to the
%following.\footnote{geen idee wat hier bedoeld wordt}

\begin{proposition}\label{prop:mki}
Assume that $r\geq q$ and $\nu\geq q$. Let $I_q$ be the
$q$-dimensional unit matrix. There exists a constant (not
depending on $z$) matrix $C\in\rr^{\nu r\times q}$ such that
$\mathcal{M}^0(z)C=I_q$ for all $z$. The equation $\cm^0(z)X=b$
for $b\in\rr^{q}$ then has the constant solution $X=Cb$. The constant matrix $C$ is unique iff $\nu=q$. In all cases one can take $C=(I_\nu\otimes
D_r^{-1})\mathcal{B}^0D_q^{-1}$, with $\mathcal{B}^0$ as in (\ref{eq:b}).
\end{proposition}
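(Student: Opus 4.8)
The plan is to verify the explicit matrix $C=(I_\nu\otimes D_r^{-1})\mathcal{B}^0D_q^{-1}$ directly, using the factorization~(\ref{eq:mm}) of $\cm^0(z)$ together with Proposition~\ref{prop:gconstant}, and then to settle the uniqueness assertion by identifying $\bigcap_z\ker\cm^0(z)$ with the kernel of a tall block-Hankel matrix whose rank is already available from Corollary~\ref{cor:rankaz}.

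\emph{Existence.} Write $C$ in block form, with $k$-th row-block $C^k=D_r^{-1}B^kD_q^{-1}$; note $C^k=0$ for $k\geq q$, since $B^k=0$ for $k\geq q$. Using $\widetilde{A}^k=D_qA^kD_r$ from~(\ref{eq:dad}) and Lemma~\ref{lemma:a0b} (which requires $\nu\geq q$), one gets $\sum_{k=0}^{q-1}\widetilde{A}^kC^k=\sum_{k=0}^{\nu-1}\widetilde{A}^kC^k=D_q(\mathcal{A}^0\mathcal{B}^0)D_q^{-1}=D_qI_qD_q^{-1}=I_q$. Since $r\geq q$, every index $k\in\{0,\ldots,q-1\}$ satisfies $k\leq r-1$, so Proposition~\ref{prop:gconstant} applies and yields $\widetilde{W}_r(z)C^k\widetilde{U}_q(z)=C^k$, i.e.\ $\widetilde{W}_r(z)C^k=C^k\widetilde{U}_q(-z)$, using $\widetilde{U}_q(z)^{-1}=\widetilde{U}_q(-z)$. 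Finally, reading off the $k$-th block of~(\ref{eq:mm}) for $\mu=1$ as $M^{(k)}(z)=\widetilde{U}_q(z)\widetilde{A}^k\widetilde{W}_r(z)$, I would compute
\[
\cm^0(z)C=\sum_{k=0}^{q-1}M^{(k)}(z)C^k=\widetilde{U}_q(z)\Bigl(\sum_{k=0}^{q-1}\widetilde{A}^kC^k\Bigr)\widetilde{U}_q(-z)=\widetilde{U}_q(z)\widetilde{U}_q(-z)=I_q .
\]
The solvability claim then follows at once: $\cm^0(z)(Cb)=(\cm^0(z)C)b=b$ for every $z$, and $Cb$ is independent of $z$.

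\emph{Uniqueness.} Two constant right inverses $C,C'$ differ by a matrix all of whose columns lie in $\bigcap_z\ker\cm^0(z)$, and adding any such columns to $C$ again produces a right inverse, so it suffices to show that $\bigcap_z\ker\cm^0(z)=\{0\}$ holds exactly when $\nu=q$. A vector $v=(v_0,\ldots,v_{\nu-1})$ lies in this intersection iff the polynomial $z\mapsto\cm^0(z)v$ vanishes identically, iff each of its Taylor coefficients at $0$ vanishes. The coefficient of $z^m$ in $M^{(k)}(z)$ equals $\frac{1}{m!}M^{(k+m)}(0)=\frac{1}{m!}\widetilde{A}^{k+m}$, and $\widetilde{A}^j=0$ once $j\geq q+r-1$, so the condition reduces to $\widetilde{\mathcal{A}}\,v=0$, where $\widetilde{\mathcal{A}}$ denotes the block-Hankel matrix with $q+r-1$ block rows, $\nu$ block columns, and $ij$-block $\widetilde{A}^{i+j}$. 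Hence $\bigcap_z\ker\cm^0(z)=\ker\widetilde{\mathcal{A}}$. By~(\ref{eq:aa2}) the matrix $\widetilde{\mathcal{A}}$ arises from the corresponding matrix $\mathcal{A}$ (same block sizes, $q+r-1$ block rows) by left and right multiplication with invertible diagonal matrices, so Corollary~\ref{cor:rankaz} gives $\mathrm{rank}\,\widetilde{\mathcal{A}}=\min\{q+r-1,r\}\cdot\min\{\nu,q\}=rq$ (using $q\geq1$ and $\nu\geq q$), whence $\dim\ker\widetilde{\mathcal{A}}=\nu r-rq=r(\nu-q)$. This is $0$ precisely when $\nu=q$, which gives uniqueness in that case and its failure otherwise; for $\nu>q$ a concrete nonzero element of the kernel is $v=(0,\ldots,0,f_0)$, since $M^{(\nu-1)}(z)f_0=u^{(\nu-1)}(z)=0$ (because $w(z)f_0=1$ and $\nu-1\geq q$).

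The computations are routine; the step I expect to require the most care is the reduction in the uniqueness argument, namely the observation that the a priori $z$-dependent requirement ``$\cm^0(z)v=0$ for all $z$'' collapses to the single linear condition $\widetilde{\mathcal{A}}v=0$ for one of the block-Hankel matrices already analyzed, so that Corollary~\ref{cor:rankaz} can be quoted off the shelf. One must also be careful to invoke Proposition~\ref{prop:gconstant} only for indices $k$ with $C^k\neq0$, which is precisely where the hypothesis $r\geq q$ enters.
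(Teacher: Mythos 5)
Your proof is correct, and it splits naturally into a part that mirrors the paper and a part that takes a genuinely different route. For existence you use exactly the paper's three ingredients --- the factorization of $\cm^0(z)$, Lemma~\ref{lemma:a0b} (that $\cb^0$ is a right inverse of $\ca^0$), and Proposition~\ref{prop:gconstant} --- but organized as a direct verification in the normalized coordinates $\widetilde{U}_q,\widetilde{W}_r,\widetilde{A}^k$ of~(\ref{eq:mm}), whereas the paper first derives the candidate $C=(I_\nu\otimes W_r(z)^{-1})\cb^0U_q(z)^{-1}$ from right-inverse considerations for $\cm(z)$ with $\mu=r$ and then checks constancy and $\cm^0(z)C=I_q$; in substance this is the same argument, with your version slightly more streamlined. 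Where you genuinely diverge is uniqueness: the paper differentiates $\cm^0(z)C=I_q$ to get $\cm(z)C$ equal to the block column with first block $I_q$ and zeros below (with $\mu=r$), and proves only the ``if'' direction by invertibility of the square $\cm(z)$ when $\nu=q$, the converse being deferred to Lemma~\ref{lemma:kerm0m} and Proposition~\ref{prop:mki2}; you instead identify $\bigcap_z\ker\cm^0(z)$ with the kernel of the constant block Hankel matrix with blocks $\widetilde{A}^{i+j}$, $q+r-1$ block rows and $\nu$ block columns, read off its rank from Corollary~\ref{cor:rankaz} via~(\ref{eq:aa2}) (re-instantiating those results with block-row parameter $q+r-1$ in place of $\mu$, which is legitimate since they hold for arbitrary parameter values), and exhibit the explicit constant kernel vector $(0,\ldots,0,f_0^\top)^\top$ when $\nu>q$. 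This buys you a self-contained proof of the full ``iff'' within the proposition, and as a by-product the dimension $r(\nu-q)$ of the constant kernel, i.e.\ the content of Lemma~\ref{lemma:kerm0m} in the relevant case, obtained by a Hankel-rank computation rather than the paper's elimination argument.
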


%\begin{proposition}\label{prop:mki1}
%Assume that $r\geq q$ and $\nu=q$. Let $I_q$ be the
%$q$-dimensional unit matrix. There exists a unique constant (not
%depending on $z$) matrix $C\in\rr^{q r\times q}$ such that
%$\mathcal{M}^0(z)C=I_q$ for all $z$. The equation $\cm^0(z)X=b$
%for $b\in\rr^{q}$ then has the constant solution $X=Cb$.
%\end{proposition}

\begin{proof}
In this proof we simply write $I$ for $I_q$. Suppose that we have
found a constant matrix $C$ with the desired property
\begin{equation}\label{eq:mk0}
\mathcal{M}^0(z)C=I.
\end{equation}
By differentiation of (\ref{eq:mk0}) $k$ times, with $k=0,\ldots,r-1$,
we obtain, recall the definition of $\mathcal{M}(z)$ with $\mu=r$, that
\begin{equation}\label{eq:mk}
\mathcal{M}(z)C=
\begin{pmatrix}
I \\
0 \\
\vdots \\
0
\end{pmatrix}.
\end{equation}
We note that now $\cm(z)$ is of size $rq\times r\nu$ and has rank equal to $rq$. Hence $\cm(z)$ has a right inverse, $\cm(z)^+$ say, and a true inverse in the case that $\nu=q$, see e.g.\ Corollary~\ref{cor:dets}.
It follows  that $C$ should be the first block-column of
$\mathcal{M}(z)^{+}$.  Next we use the factorization (\ref{eq:m}) and note that also $\ca$ has a right inverse, $\ca^+$ say. Then
\[
\cm(z)^+=(I_\nu\otimes W_r(z)^{-1})\ca^+(I_r\otimes U_q(z)^{-1}).
\]
Hence, we can choose
\[
C=(I_\nu\otimes W_r(z)^{-1})\ca^+(I_r\otimes U_q(z)^{-1})\begin{pmatrix}
I \\
0 \\
\vdots \\
0
\end{pmatrix},
\]
which means that $C$ is the first block-column of $\cm(z)^+$, so
\begin{equation}\label{eq:c}
C=(I_\nu\otimes W_r(z)^{-1})\ca^+\begin{pmatrix}
U_q(z)^{-1} \\
0 \\
\vdots \\
0
\end{pmatrix} = (I_\nu\otimes W_r(z)^{-1})(\ca^0)^+U_q(z)^{-1},
\end{equation}
where $(\ca^0)^+$ is a right inverse of the matrix $\ca^0$, since $\ca^0$ is the first block-row of $\ca$.
But, a right inverse of $\ca^0$ is in Proposition~\ref{lemma:a0b} shown to be $\mathcal{B}^0$. Therefore,
we can now
explicitly pose our candidate for $C$,
\begin{equation}\label{eq:kz}
C=(I\otimes W_r(z)^{-1})\mathcal{B}^0U_q(z)^{-1},
\end{equation}
where $\mathcal{B}^0$ as defined
in~(\ref{eq:b}). Hence we have to show that \\
(1) the matrix $C$ in~(\ref{eq:kz}),  in fact doesn't
depend on $z$,
\\
(2) $\mathcal{M}^0(z)C=I$.
Using
 matrices introduced in Section~\ref{section:factorization}, we write
\begin{align*}
C & = (I\otimes
(\widetilde{W}_r(z)^{-1}D_r^{-1}))\mathcal{B}^0D_q^{-1}\widetilde{U}_q(z)^{-1} \\
& = (I\otimes
\widetilde{W}_r(-z)D_r^{-1})\mathcal{B}^0D_q^{-1}\widetilde{U}_q(-z).
\end{align*}
Decomposing $C$ as
\[
C=\begin{pmatrix} C^0 \\ \vdots \\ C^{\nu-1}\end{pmatrix},
\]
where each block $C^k$ ($k=0,\ldots,\nu-1$) is of size $r\times q$, we get
\[
C^k=\widetilde{W}_r(-z)^\top D_r^{-1} B^kD_q^{-1}\widetilde{U}_q(-z).
\]
Hence we see that $C^k=H_k(-z)$, which was in
Proposition~\ref{prop:gconstant} shown to be constant and equal to
$D_r^{-1} B^kD_q^{-1}$, if we have $k\leq r-1$ or $k\geq q$. Obviously, this is true of $k=0,\dots,r-1$, but for $k=r,\ldots,\nu-1$, we have $k\geq r\geq q$ by assumption.
This
proves claim (1). Since $C$ is constant in $z$, we can take $z=0$ in (\ref{eq:kz}).

For the second one we have
\begin{align*}
\mathcal{M}^0(z)C & = U_q(z)\mathcal{A}^0(I\otimes W_r(z))C \\
& = U_q(z)\mathcal{A}^0(I\otimes W_r(z))(I\otimes
W_r(z)^{-1})\mathcal{B}U_q(z)^{-1} \\
& = U_q(z)\mathcal{A}^0 \mathcal{B}U_q(z)^{-1} \\
& = I,
\end{align*}
in view of Lemma~\ref{lemma:a0b}. Finally, if $\nu=q$, then $\cm(z)$ is invertible, which implies that $C$ is the unique constant matrix solving $\cm^0(z)C=I$, since in this case Equation~(\ref{eq:mk}) has a unique solution.
\end{proof}
\begin{remark}
The special choice $(\bar{\ca}^0)^+=\cb^0$ in the proof of Proposition~\ref{prop:mki} is rather crucial in finding a right inverse of $\cm^0(z)$ that doesn't depend on $z$. We illustrate this with the following example. Our point of departure is Equation~(\ref{eq:c}) with $\mu=1$. 

Recalling (\ref{eq:aa0}), we can take $(\ca^0)^+=\cl_{\nu,r}(0)^{-1}\bar{\ca}^+\cl_{1,q}(0)^{-1}$, with $\bar{\ca}^+$ any right inverse of $\bar{\ca}$. We choose $\bar{\ca}^+=\bar{\ca}^\top$ and compute 
\[
\bar{\cb}:=\cl_{\nu,r}(0)^{-1}\bar{\ca}^\top\cl_{1,q}(0)^{-1}=\cl_{\nu,r}(0)^{-1}\bar{\ca}^\top\in\rr^{\nu r\times q}, 
\]
because $\cl_{1,q}(0)=I_q$. Since $\bar{\ca}^\top_j=f_0e_j^\top$ (Theorem~\ref{thm:ao1}) and $\cl_{\nu,r}(0)^{-1}_{ij}={i\choose j}(-S_r)^{i-j}$, for the $k$-th block $\bar{B}^k$ of $\bar{\cb}$ we get $\bar{B}^k=\sum_{l=0}^{\nu-1}{l\choose k}(-1)^{l-k}f_{k-l}e_l^\top=f_0e_k^\top$ ($k=0,\ldots,\nu-1)$. We conclude that $\bar{\cb}=\bar{\ca}^\top$.

In order to see that this may result in a right inverse of $\cm^0(z)$ that depends on $z$, we choose $q=r=\nu=2$. The conditions of Proposition~\ref{prop:mki} are then satisfied. We have
\[
\cm^0(z)=
\begin{pmatrix}
1 & z & 0 & 1 \\
z & z^2 & 1 & 2z
\end{pmatrix}
\]
and it follows from the above that
\[
\cm^0(z)^+=(I_2\otimes W_2(z)^{-1})\bar{\cb}U_2(z) =
\begin{pmatrix}
1 & 0 \\
0 & 0 \\
-z & 1 \\
0 & 0
\end{pmatrix}.
\]
We close this remark by noting that there also other right inverses of $\cm^0(z)$, depending on $z$, but still having a simple structure. An example (essentially taken from~\cite{ks2007}, where it was only given for $\nu=q=r$ in  a slightly different situation) is
\[
\cm^0(z)^+=
\begin{pmatrix}
U_q(z)^{-1} \\
0_{(\nu-q)\times q}
\end{pmatrix}
\otimes
f_0,
\]
where $f_0$ is the first basis vector of $\rr^r$. This follows from the easy to verify identity
\[
\cm^0(z)(I_\nu\otimes f_0)=\begin{pmatrix} U_q(z) & 0_{q\times(\nu-q)} \end{pmatrix}.
\]
\end{remark}
The assertion of Proposition~\ref{prop:mki} is not true if $r<q$ (the second subcase). Indeed, in the proof of this proposition we used the fact that all $C^k$ are indeed constant matrices, under the condition $r\geq q$. If this is not the case, $r<q$, the matrices $C^k$ for $k=r, \ldots,q-1$ are not constant, in view of Proposition~\ref{prop:gconstant}. Let us give an example to illustrate this.
Consider the case $q=\nu=2$ and $r=1$. Then
\[
\cm^0(z)=\begin{pmatrix}
1 & 0 \\ z & 1
\end{pmatrix}
\]
and the equation $\cm^0(z)C=I_2$ has the {\em unique} on $z$ depending solution
$C=C(z)=\cm^0(z)^{-1}$.
\medskip\\
We now treat the case $\nu> q$ in more detail. To that end we need the following auxiliary result.
\begin{lemma}\label{lemma:kerm0m}
The subspace of the kernel of $\cm^0(z)$ that consists of vectors that are constant in $z$, i.e.\ the intersection $\bigcap_z\ker(\cm^0(z))$, is $(\nu-q)^+r$-dimensional. This subspace is equal to the kernel of $\cm(z)$ with $\mu=r$, which is the same for all $z$ and hence can be parametrized free of $z$.
\end{lemma}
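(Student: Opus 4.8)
The plan is to pin down $\bigcap_z\ker\big(\cm^0(z)\big)$ by a two-sided dimension count: it is contained in a subspace whose dimension is already known, while a direct count of linear constraints gives a matching lower bound, and the resulting equality delivers both assertions of the lemma at once.

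First I would record the inclusion $\bigcap_z\ker\big(\cm^0(z)\big)\subseteq\ker\big(\cm(z_0)\big)$, with $\mu=r$, valid for every fixed $z_0$. Indeed, if a constant vector $x=(x^0,\dots,x^{\nu-1})^\top$, with each $x^j\in\mathbb{C}^r$, satisfies $\cm^0(z)x\equiv 0$, then differentiating this identity $k$ times gives $\big(M^{(k)}(z),\dots,M^{(k+\nu-1)}(z)\big)x\equiv 0$, and the left-hand side is precisely the $k$-th block of $\cm(z)x$ for $\mu=r$; stacking $k=0,\dots,r-1$ yields $\cm(z)x=0$ for all $z$, in particular at $z_0$. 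By the factorization of Proposition~\ref{prop:factorM} and the rank formula of Corollary~\ref{cor:rankaz}, the matrix $\cm(z_0)$ with $\mu=r$ has rank $r\min\{\nu,q\}$, so its kernel has dimension $\nu r-r\min\{\nu,q\}=r(\nu-q)^+$ for every $z_0$; hence $\dim\bigcap_z\ker\big(\cm^0(z)\big)\le r(\nu-q)^+$.

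For the reverse bound I would rewrite $\cm^0(z)x$ in resolved form. Expanding $M^{(j)}(z)=(u(z)w(z))^{(j)}$ by the Leibniz rule and collecting terms according to the index $m=j-i$, one obtains
\[
\cm^0(z)x=\sum_{m=0}^{q-1}u^{(m)}(z)\,\zeta_m(z),\qquad
\zeta_m(z):=\sum_{i\ge 0}{m+i \choose m}\,w^{(i)}(z)\,x^{m+i}.
\]
Since $u^{(0)}(z),\dots,u^{(q-1)}(z)$ are the columns of the invertible matrix $U_q(z)$, hence linearly independent for every $z$, one has $\cm^0(z)x=0$ iff $\zeta_m(z)=0$ for $m=0,\dots,q-1$, and therefore $x\in\bigcap_z\ker\big(\cm^0(z)\big)$ iff $\zeta_m\equiv 0$ for all $m$. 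Each $\zeta_m$ is a \emph{scalar} polynomial in $z$ of degree at most $r-1$: the term $i=0$ contributes $w(z)x^m$, of degree $\le r-1$, while every $w^{(i)}(z)$ with $i\ge 1$ has entries of degree $\le r-1-i$. Thus $\zeta_m\equiv 0$ amounts to $r$ homogeneous linear equations on $x$, so $\bigcap_z\ker\big(\cm^0(z)\big)$ is the solution set of at most $qr$ homogeneous linear equations in the $\nu r$ coordinates of $x$, and consequently $\dim\bigcap_z\ker\big(\cm^0(z)\big)\ge \nu r-qr$.

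Combining the two bounds completes the argument. If $\nu>q$ they read $r(\nu-q)\le\dim\bigcap_z\ker\big(\cm^0(z)\big)\le r(\nu-q)$, while if $\nu\le q$ the upper bound already forces the dimension to be $0$; in either case $\dim\bigcap_z\ker\big(\cm^0(z)\big)=r(\nu-q)^+$. Since this subspace is contained in $\ker\big(\cm(z_0)\big)$ with $\mu=r$, which has the same dimension $r(\nu-q)^+$, the two coincide for every $z_0$; and as the left-hand side does not depend on $z_0$, neither does $\ker\big(\cm(z)\big)$ with $\mu=r$, so a fixed basis of this subspace parametrizes it free of $z$, which is the content of the lemma. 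The only point that needs a little care is the degree bound $\deg\zeta_m\le r-1$, which is exactly what keeps the number of linear constraints down to $qr$ and makes the two-sided estimate tight; everything else is routine bookkeeping.
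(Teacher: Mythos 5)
Your argument is correct, and it takes a genuinely different route in the key step. You start from the same resolved form as the paper, writing $\cm^0(z)x=\sum_{m=0}^{q-1}u^{(m)}(z)\zeta_m(z)$ and exploiting the linear independence of $u^{(0)}(z),\dots,u^{(q-1)}(z)$, and your inclusion $\bigcap_z\ker(\cm^0(z))\subseteq\ker(\cm(z_0))$ for $\mu=r$, obtained by differentiating the identity $k$ times, is exactly the paper's first observation. From there the proofs diverge: the paper actually \emph{solves} the system $\zeta_0\equiv\cdots\equiv\zeta_{q-1}\equiv 0$, first eliminating to get $x_0=\cdots=x_{q-2}=0$, then using the identity $S_rW_r(z)=W_r(z)S_r\Delta$ to reduce the remaining equation (the one for $k=q-1$) to $\begin{pmatrix}I & T & \cdots & T^{\nu-q}\end{pmatrix}y=0$ with $T=S_r\Delta$, and exhibiting an explicit $z$-free basis of the $(\nu-q)r$-dimensional solution space. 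You never solve the system: you bound the codimension of the intersection by counting at most $qr$ coefficient equations (via the degree bound $\deg\zeta_m\le r-1$), giving the lower bound $\nu r-qr$, and squeeze it against the upper bound $r(\nu-q)^+$ supplied by the rank of $\cm(z)$ with $\mu=r$; equality of dimensions plus the inclusion then yields both assertions of the lemma at once. Your version is shorter and makes transparent where each ingredient enters, but it is non-constructive: the paper's elimination produces a concrete constant basis, which is what the surrounding text actually invokes (the explicit matrix $\hat{\ck}$ in the example following the lemma is obtained by that very procedure), whereas your proof only guarantees that such a basis exists. Since the lemma as stated asks only for the dimension, the equality with $\ker(\cm(z))$ for $\mu=r$, and the possibility of a $z$-free parametrization, your argument fully establishes it.
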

\begin{proof}
The first observation is that a vector $x$ in $\ker \cm^0(z)$ that doesn't depend on $z$ also satisfies $\cm(z)x=0$ for arbitrary $\mu$, in particular for $\mu=r$. The case $\nu\leq q$ follows from Theorem~\ref{thm:kernel1}, since in this case the kernel of $\cm(z)$ is the null space for all $\mu\geq r$.

Let then $\nu>q$. Let $x$ be a column vector consisting of $r$-dimensional sub-vectors $x_0,\ldots,x_{\nu-1}$ that don't depend on $z$. Recalling that $\cm^0$ consists of a row of blocks $(uw)^{(n)}$, we have
\begin{align*}
\cm^0x & =\sum_{n=0}^{\nu-1}(uw)^{(n)}x_n \\
& = \sum_{n=0}^{\nu-1}\sum_{k=0}^n{n\choose k}u^{(k)}w^{(n-k)} \\
& = \sum_{k=0}^{\nu-1}u^{(k)}\sum_{n=k}^{\nu-1}{n\choose k}w^{(n-k)}x_n.
\end{align*}
The vectors $u^{(k)}$ are zero for $k\geq q$ and otherwise linear independent. Hence, to have the above sum equal to zero is equivalent to
\[
\sum_{n=k}^{\nu-1}{n\choose k}w^{(n-k)}x_n=0 \mbox{ for }k=0,\ldots,q-1.
\]
The equation for arbitrary $1\leq k\leq q-1$ can be differentiated to get
\[
\sum_{n=k}^{\nu-1}{n\choose k}w^{(n+1-k)}x_n=0,
\]
which, subtracting from the equation for $k-1$ yields
\[
{n\choose k-1}w^{(0)}x_{k-1}=0,
\]
valid for $k=1,\ldots, q$. The only constant solutions to these equations are the zero vectors, so $x_0=\cdots =x_{q-2}=0$.

On the other hand,  for $k=q-1$ we keep the equation
\[
\sum_{m=0}^{\nu-q}{m+q-1\choose q-1}w^{(m)}x_{m+q-1}=0.
\]
Now we relabel the unknowns by setting $y_m={m+q-1\choose q-1}x_{m+q-1}$ to get
\begin{equation*}
\sum_{m=0}^{\nu-q}w^{(m)}y_m=0.
\end{equation*}
We differentiate this equation $j$ times, with $j=0,\ldots,r-1$ to get
\begin{equation}\label{eq:wy}
\begin{pmatrix}
w^{(0)} & \cdots & w^{(\nu-q)} \\
\vdots & & \vdots \\
w^{(r-1)} & \cdots & w^{(\nu-q+r-1)}
\end{pmatrix}
y=0,
\end{equation}
where $y$ is obtained by stacking the $y_m$. The first block-column in the above matrix is $W$, the second can be written as $S_rW$, up to the last one equal to $S_r^{\nu-q}W$. Hence the above system of equations can be compactly written as
\[
\begin{pmatrix}
W & S_rW & \cdots & S_r^{\nu-q}W
\end{pmatrix}
y=0.
\]
Let $\Delta$ be the diagonal matrix with elements $\Delta_{ii}=i$. A simple computation shows that $S_rW=WS_r\Delta$, and therefore $S_r^kW=W(S_r\Delta)^k$. Writing $S_r\Delta=T$, we can rewrite the last equation in $y$ as
\[
\begin{pmatrix}
W & WT & \cdots & WT^{\nu-q}
\end{pmatrix}
y=0.
\]
Since $W=W_r(z)$ is invertible for any $z$, this reduces to
\[
\begin{pmatrix}
I & T & \cdots & T^{\nu-q}
\end{pmatrix}
y=0.
\]
Since the coefficient matrix has full row rank equal to $r$, its kernel has dimension $(\nu-q+1)r-r=(\nu-q)r$. Actually, this kernel is spanned by the columns of the $(\nu-q+1)r\times(\nu-q)r$ matrix
\[
\begin{pmatrix}
-T & & & \\
I_r & -T & & \\
& I_r & & \\
& & \ddots & -T \\
& & & I_r
\end{pmatrix}.
\]
This proves the claim.
\end{proof}
\begin{remark}
The result of Lemma~\ref{lemma:kerm0m} is also valid for $\mu>r$. This can be seen from Equation~(\ref{eq:wy}). Indeed, if $\mu>r$ one has to extend the coefficient matrix with additional rows, all involving derivatives $w^(k)$, with $k\geq r$. But these are all equal to zero.
\end{remark}
One might think that the assertion of the lemma can alternatively be proven by explicitly computing the matrix $\bar{\ck}^{\mu-1}(z)$ for $\mu=r$ (noting that $(D_\nu^{-1} \otimes I_r)\bar{\ck}^{\mu-1}(z)$ represents the kernel of $\cm(z)$ in view of (\ref{eq:mdnd})) and showing that it is not depending on $z$. It turns out that this idea is false, as shown by the following simple example.

Let $q=1$, $r=\mu=\nu=3$. We compute $\bar{\ck}^{\mu-1}(z)$ and show that it is not free of $z$. According to the results of Section~\ref{section:2ndcase} we find for  $\bar{K}_0$, $\bar{K}_1$ and $\bar{K}_2$ the following.
\[
\bar{K}_0(z)=
\pmatnocross\begin{pmat}({.|..|...})
-z & 0 & 0 & -1 & -z & 0 & 0 & 0 \cr
1 & -z & 0 & 0 & -1 & 0 & 0 & 0 \cr
0 & 1 & 0 & 0 & 0 & 0 & 0 & 0  \cr\-
0 & 0 & 1 & 0 & 0 & 0 & -1 & -z \cr
0 & 0 & 0 & 1 & 0 & 0 & 0 & -1  \cr
0 & 0 & 0 & 0 & 1 & 0 & 0 & 0 \cr\-
0 & 0 & 0 & 0 & 0 & 1 & 0 & 0 \cr
0 & 0 & 0 & 0 & 0 & 0 & 1 & 0 \cr
0 & 0 & 0 & 0 & 0 & 0 & 0 & 1 \cr
\end{pmat},
\]
\[
\bar{K}_1(z)=
\pmatnocross\begin{pmat}({|..|...})
-z & 0 & 0 & -1 & 0 & 0 & 0 \cr
1 & 0 & 0 & 0  & 0 & 0 & 0 \cr\-
0 & 1 & 0 & 0 & 0 & -1 & -z \cr
0 & 0 & 1 & 0 & 0 & 0 & -1  \cr
0 & 0 & 0 & 1 & 0 & 0 & 0 \cr\-
0 & 0 & 0 & 0 & 1 & 0 & 0 \cr
0 & 0 & 0 & 0 & 0 & 1 & 0 \cr
0 & 0 & 0 & 0 & 0 & 0 & 1 \cr
\end{pmat},\quad
\bar{K}_2(z)=
\pmatnocross\begin{pmat}({..|...})
0 & 0 & 0 & 0 & 0 & 0 \cr\-
1 & 0 & 0 & 0 & -1 & -z \cr
0 & 1 & 0 & 0 & 0 & -1  \cr
0 & 0 & 1 & 0 & 0 & 0 \cr\-
0 & 0 & 0 & 1 & 0 & 0 \cr
0 & 0 & 0 & 0 & 1 & 0 \cr
0 & 0 & 0 & 0 & 0 & 1 \cr
\end{pmat},
\]
and the product $\bar{\ck}^2(z)=\bar{K}_0(z)\bar{K}_1(z)\bar{K}_2(z)$ yields the kernel of $\cm(z)$ spanned by the columns of
\[
(D_3^{-1} \otimes I_3)\bar{\ck}^2(z)=
\pmatnocross\begin{pmat}({..|...})
0 & -1 & 0 & 0 & 0 & 2 \cr
0 & 0 & -2 & 0 & 0 & 0 \cr
0 & 0 & 0 & 0 & 0 & 0 \cr\-
1 & 0 & 0 & 0 & -3 & -3z \cr
0 & 1 & 0 & 0 & 0 & -3  \cr
0 & 0 & 1 & 0 & 0 & 0 \cr\-
0 & 0 & 0 & \half & 0 & 0 \cr
0 & 0 & 0 & 0 & \half & 0 \cr
0 & 0 & 0 & 0 & 0 & \half \cr
\end{pmat}.
\]
We see that the last column of $\bar{\ck}^2(z)$ has a term $-3z$ in the fourth row, hence this parametrization of $\ker(\cm(z))$ is not the one we are looking for. The reparametrization of $\ker(\cm(z))$ given by $(D_3^{-1} \otimes I_3)\bar{\ck}^2(z)R(z)$ with
\[
R(z)=
\pmatnocross\begin{pmat}({..|...})
1 & 0 & 0 & 0 & 5 & 6z \cr
0 & 1 & 0 & 0 & 0 & 4 \cr
0 & 0 & 1 & 0 & 0 & 0 \cr\-
0 & 0 & 0 & 2 & 0 & 0 \cr
0 & 0 & 0 & 0 & 2 & 0 \cr
0 & 0 & 0 & 0 & 0 & 2 \cr
\end{pmat}
\]
yields $(D_3^{-1} \otimes I_3)\bar{\ck}^2(z)R(z)=:\hat{\ck}$, with
\[
\hat{\ck}=
\pmatnocross\begin{pmat}({..|...})
0 & -1 & 0 & 0 & 0 & 0 \cr
0 & 0 & -2 & 0 & 0 & 0 \cr
0 & 0 & 0 & 0 & 0 & 0 \cr\-
1 & 0 & 0 & 0 & -1 & 0 \cr
0 & 1 & 0 & 0 & 0 & -2  \cr
0 & 0 & 1 & 0 & 0 & 0 \cr\-
0 & 0 & 0 & 1 & 0 & 0 \cr
0 & 0 & 0 & 0 & 1 & 0 \cr
0 & 0 & 0 & 0 & 0 & 1 \cr
\end{pmat},
\]
which is free of $z$. The procedure in the proof of Lemma~\ref{lemma:kerm0m} (in this case $y_m=x_m$, since $q=1$) yields a kernel of $\cm(z)$ spanned by column vectors that are not depending on $z$. The result of that procedure is the matrix $\hat{\ck}$ above.

\begin{proposition}\label{prop:mki2}
Let $q<\nu\leq r$. Then the equation $\cm^0(z)C=I_q$ admits a
constant solution $C\in\rr^{\nu r\times q}$. The dimension of the
affine space of constant solutions is equal to $(\nu-q)rq$.
\end{proposition}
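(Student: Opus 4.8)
The plan is to split the statement into an existence part and a dimension-count part, each of which reduces to a result already established. For existence, note that the hypothesis $q<\nu\leq r$ gives $r\geq q$ and $\nu\geq q$, so Proposition~\ref{prop:mki} applies directly and yields a constant matrix $C_0\in\rr^{\nu r\times q}$ with $\cm^0(z)C_0=I_q$ for all $z$ (for instance $C_0=(I_\nu\otimes D_r^{-1})\cb^0D_q^{-1}$). This already settles the first assertion, so the remaining work is purely about counting.

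For the dimension, the key observation is purely linear-algebraic: if $C$ and $C_0$ are both constant and satisfy $\cm^0(z)C=\cm^0(z)C_0=I_q$ for all $z$, then $D:=C-C_0$ is a constant matrix with $\cm^0(z)D\equiv 0$, and conversely $C_0+D$ is again a constant solution whenever $D$ is such a matrix. Hence the set of constant solutions is precisely the coset $C_0+\cd$, where
\[
\cd=\{D\in\rr^{\nu r\times q}\ :\ \cm^0(z)D=0\text{ for all }z\},
\]
so that the affine dimension we seek equals $\dim\cd$.

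To evaluate $\dim\cd$, I would argue that a constant matrix $D$ lies in $\cd$ if and only if each of its $q$ columns lies in $\bigcap_z\ker\cm^0(z)$, and that these $q$ columns may be chosen independently; thus $\cd$ is isomorphic to the $q$-fold Cartesian power of $\bigcap_z\ker\cm^0(z)$. By Lemma~\ref{lemma:kerm0m}, and since $\nu>q$ makes $(\nu-q)^+=\nu-q$, the space $\bigcap_z\ker\cm^0(z)$ has dimension $(\nu-q)r$. Therefore $\dim\cd=q(\nu-q)r=(\nu-q)rq$, which is the claimed value.

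I do not expect a genuine obstacle here, since both inputs are already available; the only point that needs a little care is conceptual rather than computational, namely that the difference of two constant right inverses is governed by the constant-in-$z$ part of $\ker\cm^0(z)$ (not by the full, $z$-dependent kernel), and that this is exactly what Lemma~\ref{lemma:kerm0m} describes. If one wanted an explicit parametrization of the affine space rather than just its dimension, one could write each column of $D$ in the form produced by the proof of Lemma~\ref{lemma:kerm0m}, using the change of variables $y_m={m+q-1 \choose q-1}x_{m+q-1}$ and the $(\nu-q+1)r\times(\nu-q)r$ block matrix with diagonal blocks $-T$ and subdiagonal blocks $I_r$ displayed there; but this refinement is not needed for the dimension statement.
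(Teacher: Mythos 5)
Your proposal is correct and follows essentially the same route as the paper: existence via Proposition~\ref{prop:mki} (since $q<\nu\leq r$ implies $\nu\geq q$ and $r\geq q$), and the dimension count by observing that the difference of two constant right inverses has its $q$ columns in the constant part of $\ker\cm^0(z)$, whose dimension $(\nu-q)r$ is supplied by Lemma~\ref{lemma:kerm0m}, giving $(\nu-q)rq$ in total. Your explicit coset formulation $C_0+\cd$ is just a slightly more careful phrasing of the same argument.
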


\begin{proof}
According to Proposition~\ref{prop:mki} a constant solution $C$ exists.
Any other constant right inverse $C'$ is such that the $q$ columns of $C'-C$ belongs to the kernel of $\cm^0$ and hence to the kernel of $\cm$ for $\mu=r$. In view of Lemma~\ref{lemma:kerm0m}, a basis of this kernel can be obtained can be obtained by choosing $(\nu-q)r$ linearly independent vectors. Applying this result to each the columns of $C'-C$, we obtain the result.
%Write $\cm_q^0(z)$ for the matrix $\cm^0(z)$ in order to emphasize
%that it has $q$ rows. Extend the matrix $\cm_q^0(z)\in\rr^{q\times
%\nu r}$ to the matrix $\cm_\nu^0(z)\in\rr^{\nu\times \nu r}$ (it
%contains $\nu-q$ extra rows).
%%Likewise we write $K_q$ instead of
%%$K$, and we add $\nu-q$ columns to $K_q$ to obtain $K_\nu$, so we
%%can write $K_\nu=(K_q\,\,\widetilde{K})$.
%Let $C_\nu^*$ be the (unique) constant solution to the equation
%$\cm_\nu^0(z)K_\nu=I_\nu$, which exists by virtue of
%Proposition~\ref{prop:mki}, since we have $\nu\leq r$. It follows
%that for arbitrary $X\in\rr^{(\nu-q)\times q}$ the matrix
%\[
%C_q^*(X):=C_\nu^*\begin{pmatrix}I_q\\ X\end{pmatrix}
%\]
%is a constant solution to $\cm_q^0(z)C_q=I_q$.\footnote{dit zijn
%niet alle oplossingen} The dimension of the affine space spanned by
%these solution for varying $X$ is $(\nu-q)q$. In general the
%equation $\cm^0(z)Y=I_q$ has an affine space of solutions whose
%dimension is equal to the dimension of the kernel of $\cm^0(z)$,
%which is equal to $(\nu-q)r$ in view of Theorems~\ref{thm:kernel2}
%and~\ref{thm:kernel3}. EN NU???????????
\end{proof}
%\begin{example}
To illustrate the fact that the constant solution $C$ of
Proposition~\ref{prop:mki2} is in general not unique, we consider
the case $q=1$, $r=\nu=2$. Then $\cm^0(z)=\begin{pmatrix} 1 & z & 0
& 1\end{pmatrix}$ and all constant solutions are given by
$C=C_{a,b}=\begin{pmatrix} a & 0 & b & 1-a\end{pmatrix}^\top$ with
$a,b \in\rr$, which form an affine space of dimension $(\nu-q)qr=2$.
\medskip\\
Of course all right inverses of $\cn^0(z)$, also those that depend on $z$, are given by a much larger affine subspace.
Assume $\nu\geq q$ and let $C_0(z)$ be any right inverse of $\cn^0(z)$.
Then any matrix $C(z)=C_0(z)+X(z)$, with
$X(z)\in \rr^{\nu r\times q}$ a matrix whose columns belong to $\ker\cn^0(z)$ is a right inverse. Since, $\dim\ker\cn^0(z)=\nu r -q$,
the affine subspace of these right inverses has dimension $(\nu r-q)q$.
%\end{example}
\medskip\\
%The assertions of Propositions~\ref{prop:mki} and~\ref{prop:mki2}
%can be summarized as
%\begin{corollary}
%Under the condition $q\leq \nu\leq r$, the matrix $\cm^0(z)$
%admits  constant (not depending on $z$) right inverses $K$ that
%form an affine space of dimension $(\nu-q)q$. Such a right inverse
%is unique under the additional assumption $q=\nu$.
%\end{corollary}
The natural extension of the equation $\cm^0(z)C=I_q$ is $\cm(z)C=I_{\mu q}$, with $\cm(z)$ of order $\mu q\times \nu r$ and $I_{\mu q}$ the identity matrix of order $\mu q$.
The matrix $\cm(z)$ has rank equal to $\min\{\mu,r\}\times\min\{\nu,q\}$ and therefore has full row rank if and only if $\nu\geq q$ and $\mu\leq r$. Hence, under the latter condition, and only then, a right inverse exists, and the equation $\cm(z)C=I_{\mu q}$  has a solution. This equation can be decomposed as
\[
\begin{pmatrix}
\cm^0(z) \\
\vdots \\
\cm^{\mu-1}(z)
\end{pmatrix}
\begin{pmatrix}
C_{0}& \cdots & C_{\mu-1}
\end{pmatrix}
=
\begin{pmatrix}
I_q & 0 & \cdots & 0 \\
0 & I_q & & \vdots \\
\vdots & & \ddots & \vdots \\
0 & \cdots & 0 & I_q
\end{pmatrix},
\]
where every $C_j$ has size $\nu r\times q$. Parallelling our
previous aim, also here one could be interested in finding solutions
$C$ that are constant in $z$. For $C_0$ we are in the previous
situation, since a constant $C_0$ satisfying $\cm^0(z)C_0=I_q$, also
satisfies $\cm^k(z)C_0=0$ for all $k\geq 1$. The situation for the
the other $C_k$ is different. Consider for example $C_1$. It should
satisfy $\cm^0(z)C_1=0$ and $\cm^1(z)C_1=I_q$. However, this is
impossible for a $C_1$ that is constant  in $z$, since
differentiating $\cm^0(z)C_1=0$ yields $\cm^1(z)C_1=0$. We conclude
that the equation $\cm(z)C=I_{\mu q}$ for $\mu\geq 2$ has no constant
solutions.

Nonconstant solutions are for instance Moore-Penrose
inverses. These can be obtained by using the Moore-Penrose inverse
of the matrix $\bar{\ca}$. It follows from the proof of
Theorem~\ref{thm:ao1} that for $\mu\leq r$ and $\nu\geq q$, the
matrix $\bar{\ca}(\bar{\ca})^\top$ is the identity matrix. Hence
$(\bar{\ca})^\top$ is a right inverse of $\bar{\ca}$. Using then
Theorem~\ref{thm:azfactor1} and Proposition~\ref{prop:factorM} we obtain
that
\[
\cm(z)^+=(I_\nu\otimes
W_r(z)^{-1})\cl_{\nu,r}^{-1}\bar{\ca}^\top\cl_{\mu,q}^{-1}(I_\mu\otimes
U_q(z)^{-1}
\]is a right inverse of $\cm(z)$. The inverses $U_q(z)^{-1}$ and $W_r(z)^{-1}$ can be computed easily,
since one has for instance $\widetilde{U}_q(z)=U_q(z)D_\mu$ and $\widetilde{U}_q(z)^{-1}=\widetilde{U}_q(-z)$.
The inverses $\cl_{\nu,r}^{-1}$ and $\cl_{\mu,q}^{-1}$ can be computed in view of
the formulas just above Theorem~\ref{thm:azfactor1}. Since $\cl_{\nu,r}^{-1}=\cl_{\nu,r}(0)^{-1}$,
one obtains that its $ij$-block ($i\geq j$) is given by ${i \choose j}(S_q^\top)^{i-j}(-1)^{i-j}$. Summarizing, we have
\begin{proposition}
The matrix $\cm(z)$ has a right inverse iff $\nu\geq q$ and $\mu\leq r$,
in which case a right inverse is
\[
\cm(z)^+=(I_\nu\otimes
W_r(z)^{-1})\cl_{\nu,r}(0)^{-1}\bar{\ca}^\top\cl_{\mu,q}(0)^{-1}(I_\mu\otimes
U_q(z)^{-1}.
\]
All right inverses form an affine space of dimension $(\nu r - \mu q)\mu q$.
\end{proposition}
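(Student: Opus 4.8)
The plan is to assemble the statement from the factorizations and rank computations already in hand; no genuinely new work is needed, only careful bookkeeping.

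First I would settle the existence criterion. A matrix has a right inverse precisely when it has full row rank, so $\cm(z)$ has a right inverse iff $\mathrm{rank}\,\cm(z)=\mu q$. By Proposition~\ref{prop:factorM} together with Theorem~\ref{thm:ao1}, the matrices $\cm(z)$, $\ca$ and $\bar{\ca}$ all have the same rank, namely $\min\{\mu,r\}\cdot\min\{\nu,q\}$, for every $z$. Since $\min\{\mu,r\}\le\mu$ and $\min\{\nu,q\}\le q$, this product equals $\mu q$ iff both factors are saturated, i.e.\ iff $\mu\le r$ and $\nu\ge q$, which is the asserted equivalence.

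Next, assuming $\mu\le r$ and $\nu\ge q$, I would exhibit a right inverse by unwinding the chain of factorizations. Combining $\cm(z)=(I_\mu\otimes U_q(z))\ca(I_\nu\otimes W_r(z))$ from Proposition~\ref{prop:factorM} with $\ca=\cl_{\mu,q}(0)\bar{\ca}\cl_{\nu,r}(0)^\top$ from \eqref{eq:aa0}, and noting that $U_q(z)$, $W_r(z)$, $\cl_{\mu,q}(0)$ and $\cl_{\nu,r}(0)$ are all invertible, the problem reduces to producing a right inverse of $\bar{\ca}$. Here the computation in the proof of Theorem~\ref{thm:ao1} is exactly what is needed: under $\mu\le r$ and $\nu\ge q$ all $\min\{\mu,r\}=\mu$ diagonal blocks of $\bar{\ca}\bar{\ca}^\top$ equal $I_q$, so $\bar{\ca}\bar{\ca}^\top=I_{\mu q}$ and $\bar{\ca}^\top$ is a right inverse of $\bar{\ca}$. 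Inverting the outer factors of \eqref{eq:aa0} and using $\bar{\ca}^\top$ for $\bar{\ca}$ then gives a right inverse of $\ca$, and substituting it into the factorization of $\cm(z)$ yields the asserted $\cm(z)^+$; the identity $\cm(z)\cm(z)^+=I_{\mu q}$ follows by telescoping, the factors $(I_\nu\otimes W_r(z))(I_\nu\otimes W_r(z)^{-1})$, then the $\cl_{\nu,r}(0)$ pair, then $\bar{\ca}\bar{\ca}^\top$ cancelling in turn, leaving $(I_\mu\otimes U_q(z))\cl_{\mu,q}(0)\cl_{\mu,q}(0)^{-1}(I_\mu\otimes U_q(z)^{-1})=I_{\mu q}$. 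The only point requiring care is the transpose on $\cl_{\nu,r}(0)$ in \eqref{eq:aa0}; with that kept track of, the verification is routine and I would not spell it out.

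Finally, for the dimension count I would argue exactly as in the earlier discussion of right inverses of $\cn^0(z)$. Writing $I_{\mu q}$ column by column, a matrix $C\in\rr^{\nu r\times\mu q}$ satisfies $\cm(z)C=I_{\mu q}$ iff each column of $C-\cm(z)^+$ lies in $\ker\cm(z)$, and conversely every such perturbation of $\cm(z)^+$ is again a right inverse. Since $\cm(z)$ has full row rank $\mu q$ and $\nu r$ columns, $\dim\ker\cm(z)=\nu r-\mu q$, so each of the $\mu q$ columns ranges over a space of dimension $\nu r-\mu q$; hence the right inverses form an affine space of dimension $(\nu r-\mu q)\mu q$. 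There is no real obstacle in this proof: every ingredient — the rank formula, the factorization \eqref{eq:aa0}, and the identity $\bar{\ca}\bar{\ca}^\top=I$ — is already available, and the remaining effort is purely organizational.
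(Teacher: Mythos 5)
Your argument is correct and is essentially the paper's own: full row rank iff $\nu\geq q$ and $\mu\leq r$ from the rank formula, $\bar{\ca}\bar{\ca}^\top=I_{\mu q}$ from the proof of Theorem~\ref{thm:ao1}, the factorizations of Theorem~\ref{thm:azfactor1} and Proposition~\ref{prop:factorM} to assemble the right inverse, and the kernel-dimension count for the affine space of all right inverses. Your caveat about the transpose is well taken: carried out carefully, the construction yields $(\cl_{\nu,r}(0)^\top)^{-1}$ where the displayed formula has $\cl_{\nu,r}(0)^{-1}$, and a small check (e.g.\ $q=1$, $r=\mu=\nu=2$ at $z=0$) confirms the formula as printed is off by exactly that transpose, so this is a typo in the statement rather than a gap in your proof.
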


\end{document}